\def\R{\mathbb{R}}
\def\cI{\mathcal{I}}
\def\cL{\mathcal{L}}
\def\cN{\mathcal{N}}
\def\cS{\mathcal{S}}
\def\cT{\mathcal{T}}
\def\a{\alpha}
\def\b{\beta}
\def\g{\gamma}
\def\d{\delta}
\def\l{\lambda}
\def\s{\sigma}
\def\p{\partial}
\def\o{\omega}
\def\veps{\varepsilon}
\def\vrho{\varrho}
\def\O{\Omega}
\def\G{\Gamma}
\def\GD{{\Gamma_{\rm D}}}
\def\GN{{\Gamma_{\rm N}}}
\def\DD{{\rm D}} 
\def\NN{{\rm N}} 
\def\wto{\rightharpoonup}
\def\tu{\widetilde{u}}
\newcommand{\dv}[1]{\,{\mathrm d}#1}
\newcommand{\wcheck}[1]{#1\hspace{-.8ex}\mbox{\huge {\lower.45ex \hbox{$\textstyle \check{}$}}} \hspace{.5ex}}
\DeclareMathOperator{\diver}{div}
\let\oldmarginpar\marginpar
\renewcommand\marginpar[1]{
  \oldmarginpar[\raggedleft\footnotesize #1]
  {\raggedright\footnotesize #1}}
\newtheorem{definition}{Definition}
\newtheorem{lemma}[definition]{Lemma}
\newtheorem{proposition}[definition]{Proposition}
\newtheorem{theorem}[definition]{Theorem}
\newtheorem{remark}[definition]{Remark}
\newtheorem{remarks}[definition]{Remarks}
\newtheorem{example}[definition]{Example}
\numberwithin{definition}{section}
\definecolor{tourquoise}{RGB}{0,170,180}	
\definecolor{darkred}{RGB}{238,34,34}		
\definecolor{darkgreen}{RGB}{0,190,0}		
\definecolor{lightgray}{RGB}{210,210,210}	
\definecolor{deepblue}{RGB}{0,0,240}		
\definecolor{darkgray}{RGB}{144,144,144}	
\definecolor{kingblue}{RGB}{64,96,224}		
\definecolor{gold}{RGB}{240,208,0}		
\definecolor{verydarkred}{RGB}{176,0,0}		
\newtheorem{assumption}[definition]{Assumption}
\def\Rinf{\R\cup \{\infty\}}
\def\hD{\widehat{D}}
\def\heta{\widehat{\eta}}
\def\hcI{\widehat{\cI}}
\begin{document}
\title[Primal-dual gap estimators]{Primal-dual gap estimators for a posteriori error analysis of nonsmooth minimization problems}
\author{S\"{o}ren Bartels}
\address{Department of Applied Mathematics, Mathematical Institute, University of Freiburg, Hermann-Herder-Str. 9, 79104 Freiburg i. Br., Germany}
\email{bartels@mathematik.uni-freiburg.de}
\author{Marijo Milicevic}
\address{Department of Applied Mathematics, Mathematical Institute, University of Freiburg, Hermann-Herder-Str. 9, 79104 Freiburg i. Br., Germany}
\email{marijo.milicevic@mathematik.uni-freiburg.de}
\date{\today}
\subjclass{49M29, 65K15, 65N15, 65N50}
\keywords{convex minimization, primal-dual gap, adaptive mesh refinement, nonlinear Laplace, image denoising}

\maketitle

\begin{abstract}
The primal-dual gap is a natural upper bound for the energy error
and, for uniformly convex minimization problems, also for the error in the energy norm.
This feature can be used to construct reliable primal-dual gap error estimators for which
the constant in the reliability estimate equals one for the energy error and equals the
uniform convexity constant for the error in the energy norm. In particular, it defines
a reliable upper bound for any functions that are feasible for the primal and the
associated dual problem. The abstract a posteriori error estimate based on the
primal-dual gap is provided in this article, and the abstract theory is applied to the
nonlinear Laplace problem and the Rudin-Osher-Fatemi image denoising problem.
The discretization of the primal and dual problems with conforming, low-order
finite element spaces is addressed. The primal-dual gap error estimator is used
to define an adaptive finite element scheme and numerical experiments are presented,
which illustrate the accurate, local mesh refinement in a neighborhood of the singularities,
the reliability of the primal-dual gap error estimator and the moderate
overestimation of the error.
\end{abstract}

\section{Introduction}

Many problems in various applications like partial differential equations, mechanics, imaging,
and operations research can be formulated as convex minimization problems of the form
\[
\inf_{u \in X} E(u) = \inf_{u \in X} F(Bu) + G(u)
\]
with convex functionals~$F,G$ and a bounded linear operator~$B$. Examples are
the nonlinear Laplace equation, the Rudin-Osher-Fatemi model for image denoising,
obstacle problems or convex programming. Depending on the data and the geometry
of the problem a solution $u \in X$ of the above minimization problem may suffer from
singularities which can harm the convergence rate as the mesh size $h>0$ of a
finite element method tends to zero.
A well-known example for this phenomenon is the linear Laplace problem on the L-shaped
domain. The geometry of the domain leads to a convergence rate of order~$\mathcal{O}(h^\g)$
instead of~$\mathcal{O}(h)$ in the energy norm, where $0<\g<1$ and~$\g$ depends on
the angle at the reentrant corner. Singularities may also arise due to intrinsic properties of
the functions in the underlying space~$X$. An example is the space of functions with
bounded variation~$BV(\O)$, which allows for jumps along interfaces, which is of interest, e.g.,
in image processing to preserve sharp edges. Yet, these jumps cause problems in the
finite element approximation of~$BV$-functions.

One way to overcome these drawbacks is adaptive mesh refinement. The general procedure
of adaptive routines is to compute an approximation of the minimizer in the discrete space
with a given underlying triangulation, compute a posteriori error estimators on the basis of the
computed approximation, refine the mesh locally where the error estimators are relatively
large and to compute a new approximate solution corresponding to the new mesh. In this
sense, adaptive methods are iterative numerical methods. The reader is referred to,
e.g.,~\cite{BabRhe78,AinOde97,NocSieVee09,Ver13,Ste07} to get an overview of adaptive
finite element methods.

The design of a posteriori error estimators is fundamental to adaptive finite element methods.
Particularly, it is crucial that the error estimators define upper (\textit{reliability})
and lower (\textit{efficiency}) bounds for an appropriate measure of the error and that
the constant in the upper bound is small and known. We will consider primal-dual gap
error estimators which can be derived using duality theory from convex analysis. In the
contributions~\cite{Rep00,Rep00:2,Rep00:3,Rep99,RepXan96,RepXan97,Bar15:2,BarSch17}
these primal-dual gap error estimators have been introduced and used
for various problems, e.g., elasto-plasticity and optimal transport. In~\cite{Rep00}
the primal-dual gap error estimator has been analyzed for general
convex minimization problems with uniformly convex functionals and the relation to other
a posteriori error estimators based on, e.g., residual and gradient recovery methods has been
addressed. Yet, the numerical study of primal-dual gap error estimators has not been considered
in any of those contributions.
We will analyze primal-dual gap based error estimators for the nonlinear Laplace problem
\[
E_{\Delta_\s}(u) = \frac{1}{\s} \int_\O |\nabla u|^\s \dv{x} - \int_\O fu \dv{x} \quad \longrightarrow \quad \text{Min.!}
\]
with $1<\s<\infty$, which has also been addressed in~\cite{Rep00:2} without a numerical study,
and for the Rudin-Osher-Fatemi (ROF) model
\[
E_{\rm rof}(u) = |\DD u|(\O) + \frac{\a}{2}\|u-g\|_{L^2(\O)}^2 \quad \longrightarrow \quad \text{Min.!}
\]
with~$|\DD u|(\O)$ the total variation of~$u$, which has been analyzed in, e.g.,~\cite{Bar15:2}.

The nonlinear Laplace problem serves as a model problem for degenerate nonlinear systems.
Results concerning the regularity of solutions, their approximation by finite elements and
a priori error estimates can be found, e.g.,
in~\cite{GloMar75,Cho89,BarLiu93,LiuBar93,LiuBar93:2,Ebm01,Ebm02,EbmLiuSte05,EbmLiu05}.
An important observation in the a priori error analysis was that the energy norm is not well suited
for the analysis since optimal convergence rates can only be guaranteed under restrictive assumptions
on the regularity of the solution, cf.~\cite{GloMar75,Cho89,BarLiu93,LiuBar93,LiuBar93:2}. It turned
out that a so-called \textit{quasi-norm}, which is a weighted $L^2$-norm of the gradient with
a weight depending on the gradient and which has been introduced in~\cite{BarLiu93},
is more appropriate for the analysis of the nonlinear Laplacian, cf.~\cite{EbmLiu05,DieRuz07}.
Particularly, the optimal convergence rate~$\mathcal{O}(h)$ for P1 finite elements can be proven under much less
restrictive regularity assumptions on the solution, cf.~\cite{EbmLiuSte05,EbmLiu05,DieRuz07}.
In~\cite{LiuYan01,LiuYan01:2,LiuYan02} residual-based a posteriori error estimators have been
proposed and reliability and efficiency has been established with respect to the quasi-norm.
However, the involved constants are not explicitly available. Residual-based
quasi-norm error estimators yielding explicit constants in the reliability estimate
have been discussed in~\cite{CarKlo03}
under the assumption that the modulus of the gradient is greater than zero almost everywhere
in the domain whereas the reliability and efficiency of quasi-norm error estimators
based on gradient recovery techniques has been established in~\cite{CarLiuYan06}.
The convergence of an adaptive scheme with residual-based a posteriori error estimators has
been proven in~\cite{Vee02}.
In~\cite{DieKre08,BelDieKre12} the linear convergence and optimality of an adaptive
method driven by residual-based quasi-norm error estimators has been proven.
The involved constants particularly for the upper bound depend on the nonlinearity of the problem.
In~\cite{AlaErnVoh11,ErnVoh13} the error is measured in a residual flux-based dual norm
and the a posteriori error estimator consists of a residual term, a diffusive flux term and a
linearization term. Flux reconstruction techniques are presented to compute the error
estimator and reliability (with constant one) and efficiency (with a constant independent of
the nonlinearity of the problem) are shown. Particular focus is on the balance of linearization
and discretization errors.

The ROF model serves as a prototype for $BV$-regularized minimization problems
with applications, e.g., in image processing (cf.~\cite{RudOshFat92,AubKor06}) and mechanics
(cf.~\cite{Tho13}). A primal-dual gap error estimator has been proposed to define an
adaptive algorithm for the ROF problem in~\cite{Bar15:2}, which has proven to
accurately detect the a priori unknown jump sets of the minimizer yielding locally refined
meshes in a neighborhood of the jump sets. Therein, a finite element method has been
proposed where the primal and dual problem
have been discretized with continuous, elementwise affine finite elements. However, the
approximation of the dual ROF problem by continuous finite elements is suboptimal since
the dual ROF problem is posed on~$H_\NN(\diver;\O)$. This is reflected in the experiments
in ~\cite{Bar15:2} where oscillations of the approximations along the interface can be observed.\\
\\
The advantage of primal-dual gap error estimators is that they are applicable to a
large class of convex minimization problems and naturally yield
upper bounds for the energy difference between the energy of an arbitrary admissible
test function and the optimal energy with constant one. In case of~$F$ or~$G$
being strongly convex (or coercive) they also define upper bounds for some appropriate error
measure with a constant depending on the coercivity constant. Particularly, they define
reliable upper bounds independently of the iterative solver used to approximate
discrete solutions to the primal and dual problem, i.e., the primal-dual gap error
estimator can be evaluated at any two feasible functions for
the primal and the dual problem to obtain an upper bound for the error. Last but not least,
the functionals~$F$ and~$G$ need not be assumed to be differentiable and there
does not need to exist a variational formulation of the primal problem to establish
the reliability of the primal-dual gap error estimators.\\
\\
In this paper we will consider primal-dual gap error estimators for both the nonlinear
Laplace problem and the ROF problem. While in~\cite{Rep00:2} the primal-dual gap
error estimator has been considered for the nonlinear Laplacian, the discretization and
numerical implementation is missing.
Furthermore, noting that the dual problem corresponding to the
nonlinear Laplace problem is given by a smooth, linearly constrained optimization problem
a modified error estimator, which is an upper bound for the primal-dual gap error estimator,
is suggested in~\cite{Rep00:2} allowing for dual test functions that do not satisfy the linear constraint.
We will consider the ``original'' primal-dual gap error estimator to control the quasi-norm
used in~\cite{BarLiu93,DieKre08}. In particular, the primal-dual gap error estimator~$\eta_{\rm pd}$ can be
used to improve the reliability estimate for the convergent, reliable and efficient residual-based error
estimator~$\eta_{\rm res}$ analyzed in~\cite{DieKre08,BelDieKre12}, i.e., defining
$\eta_{\rm com} = \min\{\eta_{\rm pd},\eta_{\rm res}\}$ we obtain a reliable, robust, efficient
and convergent error estimator. Continuous, piecewise affine finite elements are used
for the discretization of the primal nonlinear Laplace problem and the ROF problem
posed in~$W^{1,\a}(\O)$ and $BV(\O)\cap L^2(\O)$, respectively.
The dual problems are posed in $W^\b(\diver;\O)$, $\b=\a/(\a-1)$, and $H_\NN(\diver;\O)$ in case of
the nonlinear Laplacian and the ROF problem, respectively. In both cases
we use the Brezzi-Douglas-Marini finite element (cf. ~\cite{BofBreFor13}), which consists of
discontinuous piecewise affine vector fields with continuous normal components across
interelement sides, for the discretization. This is in contrast to the discretization
in~\cite{Bar15:2} where the dual ROF problem has been discretized with continuous,
piecewise affine vector fields, which is known to be problematic in, e.g., the discretization
of the dual formulation of the linear Laplacian with mixed finite elements. Particularly, oscillations are observed
in the approximation of~$u$ along the interface, cf. Section~\ref{sec:experiments}.
The discrete optimization problems related to the primal and the dual problems are
solved using the Variable-Alternating Direction Method of Multipliers (Variable-ADMM)
proposed in~\cite{BarMil17} which is an operator splitting method with variable step sizes.\\
\\
The paper is organized as follows. In Section~\ref{sec:preliminaries} we introduce the
notation, important function spaces and finite element spaces and state some approximation
results. The abstract primal-dual gap error estimator and a posteriori error estimate are
the subject of Section~\ref{sec:abstract error estimate}.
In Sections~\ref{sec:nonlinear Laplace} and~\ref{sec:ROF}
we state the nonlinear Laplace problem and the ROF problem, respectively,
and the associated dual problems, summarize a priori
and a posteriori error estimates and briefly address the numerical solution of the discrete
primal and dual problems. Finally, we present in Section~\ref{sec:experiments} our
numerical results for both problems for examples for which the exact solutions are explicitly
available.\\
\\
Let us remark that this article is part of the thesis~\cite{MIL19}, in which certain arguments
have been elaborated.

\section{Preliminaries}\label{sec:preliminaries}

\subsection{Function spaces and convex analysis}

We let $\O\subset\R^d$, ~$d=2,3$, be a bounded, polygonal
Lipschitz domain with Dirichlet boundary~$\G_\DD$ and Neumann boundary~$\G_\NN$
such that $\partial \O=\G_\DD \cup \G_\NN$. The $L^2$-norm on~$\O$
is denoted by~$\|\cdot\|$ and is induced by the scalar product
\[
(v,w) := \int_\O v \cdot w \dv{x}
\]
for scalar functions or vector fields $v,w \in L^2(\O;\R^r)$, $r \in \{1,d\}$, and we write~$|\cdot|$
for the Euclidean norm.\\
For $s \geq0$ and $\s \geq 1$ we let $W^{s,\s}(\O;\R^r)$ be the standard Sobolev space
with norm~$\|\cdot\|_{W^{s,\s}(\O)}$ and seminorm~$|\cdot|_{W^{s,\s}(\O)}$ with differentiability
exponent~$s$ and integrability exponent~$\s$. The subspace $W_\DD^{s,\s}(\O;\R^r)$ consists of
all functions in~$W^{s,\s}(\O;\R^r)$ that vanish on~$\G_\DD$ for $s\geq 1$ in the sense
of traces. If $s=0$ we write $L^\s(\O;\R^r)$ instead of~$W^{s,\s}(\O;\R^r)$.\\
Finally, for $\s'\geq 1$, we denote by $W^{\s'}(\diver;\O)$ the function space consisting of
all vector fields $p \in L^{\s'}(\O;\R^d)$ such that there exists a function $f \in L^{\s'}(\O)$ with
\[
\int_\O p \cdot \nabla \varphi \dv{x} = -\int_\O f \varphi \dv{x}
\]
for all continuously differentiable, compactly supported functions $\varphi \in C_c^1(\O)$.
If such a function $f \in L^{\s'}(\O)$ exists, we write $\diver p = f$. The space $W^{\s'}(\diver;\O)$
is equipped with the norm
\[
\|\cdot\|_{W^{\s'}(\diver;\O)}=\|\cdot\|_{L^{\s'}(\O)}+\|\diver\cdot\|_{L^{\s'}(\O)}.
\]
Furthermore, we denote by $W_\NN^{\s'}(\diver;\O)$ all elements of $p \in W^{\s'}(\diver;\O)$ with
$p\cdot n=0$ on~$\G_\NN$ in distributional sense, i.e.,
\[
\langle p\cdot n,u \rangle = \int_\O p \cdot \nabla u \dv{x} + \int_\O u \diver p \dv{x} = 0
\]
for all $u \in W_\DD^{1,\s}(\O)$, where $\s\geq 1$ is the dual exponent to $\s' \geq 1$, i.e.,
$1/\s + 1/\s'=1$. If $\s'=2$ we write $H(\diver;\O)$ instead of $W^2(\diver;\O)$, and accordingly
$H_\NN(\diver;\O)$ instead of $W_\NN^2(\diver;\O)$.\\
\\
For the general, abstract a posteriori error estimate we will work with
two reflexive Banach spaces~$X$ and~$Y$ equipped with the norms~$\|\cdot\|_X$
and~$\|\cdot\|_Y$, respectively. We denote their duals by~$X'$ and~$Y'$ and the
corresponding duality pairings by~$\langle \cdot,\cdot \rangle_{X',X}$ and~$\langle \cdot,\cdot \rangle_{Y',Y}$,
respectively. The double duals~$X''$ and~$Y''$ are identified with~$X$ and~$Y$, respectively.
If~$X$ is a Hilbert space with inner product~$(\cdot,\cdot)_X$, we identify the dual~$X'$
with~$X$. Given a bounded linear operator $B:X \to Y$ we denote by $B':Y'\to X'$ its adjoint.
For proper, convex and lower-semicontinuous functionals $F:Y \to \Rinf$ and $G:X \to \Rinf$
the subdifferentials $\p G(u)\subset X'$ at $u \in X$ and $\p F(p) \subset Y'$ at $p \in Y$ are defined by
\[\begin{split}
\partial G(u) &= \{w\in X': \; \langle w,v-u \rangle_{X',X} + G(u) \le G(v) \quad \text{for all } v \in X\},\\ 
\partial F(p) &= \{\l \in Y': \; \langle \l,q-p \rangle_{Y',Y} + F(p) \le F(q) \quad \text{for all } q \in Y\}.
\end{split}\]
Possible coercivity of the functionals~$F$ and~$G$ is 
characterized by non-negative mappings $\vrho_F:Y\times Y \to \R_+$ and 
$\vrho_G:X \times X \to \R_+$ such that for $w \in \partial G(u)$ and $\l \in \partial F(p)$ we have
\begin{equation}\label{eq:coercivity}
\begin{split}
\langle w,v-u \rangle_{X',X} + G(u) + \vrho_G(v,u) &\le G(v) \quad \text{for all } v \in X,\\
\langle \l,q-p \rangle_{Y',Y} + F(p)  + \vrho_F(q,p) &\le F(q) \quad \text{for all } q \in Y.
\end{split}
\end{equation}
This can be regarded as a generalization of the notion of uniform convexity
and strong convexity. The existence of non-trivial~$\vrho_G$ or~$\vrho_F$
will induce an error measure for which we establish primal-dual gap error
estimates. For the a posteriori error analysis we will need the Fenchel
conjugates~$F^*$ and~$G^*$, which are defined by
\[
F^*(q) = \sup_{p\in Y} \langle q,p \rangle_{Y',Y} - F(p), \quad 
G^*(v) = \sup_{u\in X} \langle v,u \rangle_{X',X} - G(u).
\]
These are used to convert the primal problems into dual problems.

\subsection{Finite element spaces}\label{subsec:fe}

We let~$(\cT_h)_{h>0}$ be a family of regular triangulations of~$\O$.
The set~$\cS_h$ consists of all edges ($d=2$)
or faces ($d=3$) of elements of~$\cT_h$ and~$\cN_h$ denotes the set of
nodes of~$\cT_h$. The elementwise constant mesh size function $h_\cT \in \cL^\infty(\O)$
is defined by
\[
h_\cT|_T = h_T = \operatorname{diam}(T)
\]
for all $T \in \cT_h$.
In the context of locally refined meshes we employ the average mesh size
\[
\overline{h} = |\cN_h|^{-1/d}
\]
defined with the cardinality~$|\cN_h|$ of~$\cN_h$.
Throughout the paper~$c$ will denote a generic, positive and mesh-independent constant.\\
For an integer $k \geq 0$ and a triangle $T \in \cT_h$ let~$P_k(T)$ be the space of
polynomials on~$T$ with total degree at most~$k$. We then consider for $r \in \{1,d\}$
the finite element spaces
\[
\cS^k(\cT_h)^r := \bigl\{v_h\in C(\overline{\O};\R^r): \; v_h|_T \in P_k(T)^r \text{ for all }T\in \cT_h\bigr\}
\]
and
\[
\cL^k(\cT_h)^r := \big\{q_h\in L^1(\O;\R^r): \; q_h|_T \in P_k(T)^r \text{ for all } T\in \cT_h\big\}.
\]
For an elementwise continuous function $v \in C(\cT_h)$ the operator
\[
\widehat{\cI}_h: C(\cT_h) \to \cL^1(\cT_h)
\]
is defined by the elementwise application of the standard nodal interpolation operator
$\cI_h: C(\overline{\O}) \to \cS^1(\cT_h)$. Note that $\widehat{\cI}_h|_{C(\overline{\O})}=\cI_h$.
With the nodal basis $\{\varphi_z: \; z \in \cN_h\} \subset \cS^1(\cT_h)$ the bilinear form
\[
(v,w)_h := \int_\O \widehat{\cI}_h(vw) \dv{x} = \sum_{T \in \cT_h} \sum_{z \in \cN_h\cap T} \b_z^T v|_T(z) w|_T(z)
\]
for $v,w \in \cL^1(\cT_h)$, where $\b_z=\int_T \varphi_z \dv{x}$, defines an inner product
on~$\cL^1(\cT_h)$. This mass lumping will allow for the
nodewise solution of certain nonlinearities. We have the relation
\[
\|v_h\| \leq \|v_h\|_h \leq (d+2)^{1/2} \|v_h\|
\]
for all $v_h \in \cL^1(\cT_h)$, cf.~\cite[Lemma 3.9]{Bar15}.

For completeness we provide the next lemma which states
that~$\cS^1(\cT_h)^d$ is dense in~$W^\b(\diver;\O)$.

\begin{lemma}\label{lem:density}
Let $p \in W^\b(\diver;\O)$. For every $\veps > 0$ there exists $h(\veps)>0$ such that
for all $h \leq h(\veps)$ there exists a function $q_h \in \cS^1(\cT_h)^d$ with
\[
\|p-q_h\|_{W^\b(\diver;\O)} < \veps.
\]
\end{lemma}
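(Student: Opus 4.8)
The plan is to prove density of $\cS^1(\cT_h)^d$ in $W^\b(\diver;\O)$ by a two-stage approximation argument: first approximate an arbitrary $p \in W^\b(\diver;\O)$ by a smooth vector field, then approximate the smooth field by its nodal interpolant on a fine enough triangulation. Throughout, the relevant norm is the graph norm $\|\cdot\|_{W^\b(\diver;\O)} = \|\cdot\|_{L^\b(\O)} + \|\diver\cdot\|_{L^\b(\O)}$, so I must control both the $L^\b$-error of the field and the $L^\b$-error of its divergence simultaneously.

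\emph{Density of smooth fields.} First I would recall the classical fact that $C^\infty(\oO;\R^d)$ is dense in $W^\b(\diver;\O)$ for a bounded Lipschitz domain; this is the $L^\b$-analogue of the standard $H(\diver)$ density result and follows from mollification combined with the Lipschitz-domain extension/translation technique. Thus, given $\veps>0$, I fix a field $\tf \in C^\infty(\oO;\R^d)$ with
\[
\|p-\tf\|_{W^\b(\diver;\O)} < \veps/2.
\]
This reduces the problem to approximating the \emph{smooth} field $\tf$ by $\cS^1(\cT_h)^d$-functions.

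\emph{Nodal interpolation of the smooth field.} Here I would take $q_h = \cI_h \tf$, the componentwise nodal interpolant of $\tf$ into $\cS^1(\cT_h)^d$, using that the nodal interpolation operator $\cI_h$ is well defined on continuous functions. Standard interpolation estimates on a regular family of triangulations give, for each component, a bound of the form $\|\tf - \cI_h \tf\|_{L^\b(\O)} \le c\, h\, |\tf|_{W^{1,\b}(\O)}$, and analogously $\|\nabla \tf - \nabla \cI_h \tf\|_{L^\b(\O)} \le c\, h\, |\tf|_{W^{2,\b}(\O)}$ since $\tf \in C^\infty(\oO;\R^d)$ has all the required regularity. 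The crucial observation is that the divergence is a linear combination of first derivatives, so $\|\diver\tf - \diver \cI_h \tf\|_{L^\b(\O)} \le \|\nabla\tf - \nabla\cI_h\tf\|_{L^\b(\O)}$ is controlled by the same $O(h)$ interpolation estimate. Consequently $\|\tf - q_h\|_{W^\b(\diver;\O)} \le c\,h\,\|\tf\|_{W^{2,\b}(\O)}$, which can be made smaller than $\veps/2$ by choosing $h \le h(\veps)$ sufficiently small. Combining with the triangle inequality yields $\|p - q_h\|_{W^\b(\diver;\O)} < \veps$ for all $h \le h(\veps)$.

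\emph{Main obstacle.} The routine interpolation step is standard; the delicate point is the first stage, namely justifying density of smooth fields in $W^\b(\diver;\O)$ for a general polygonal Lipschitz domain and for the exponent $\b$ (which equals $\s/(\s-1)$ and may differ from $2$). Unlike the Hilbert case $H(\diver;\O)$, one cannot invoke an orthogonal-projection argument; instead one relies on the Meyers--Serrin-type smoothing adapted to the graph norm, using that $\diver$ commutes with mollification up to boundary corrections handled via the Lipschitz-domain translation trick. I would therefore either cite a standard reference for this $L^\b(\diver)$-density result or sketch the mollification argument, and then the rest of the proof is the clean interpolation estimate above.
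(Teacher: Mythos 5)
Your proof is correct and takes essentially the same route as the paper's: approximate $p$ by a smooth field using the density of $C^\infty(\overline{\O};\R^d)$ in $W^\b(\diver;\O)$, then take the componentwise nodal interpolant $q_h=\cI_h q$ with the standard $O(h)$ interpolation estimate (which controls the divergence error since $\|\diver(q-\cI_h q)\|_{L^\b(\O)}$ is dominated by the full gradient error), and conclude by the triangle inequality. The only cosmetic difference is that the paper bounds the interpolation error by $c\,h\,|q|_{W^{2,\infty}(\O;\R^d)}$ while you use $W^{2,\b}$-norms, both of which are finite for a smooth field on a bounded domain.
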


\begin{proof}
Since~$C^\infty(\overline{\O};\R^d)$ is dense in~$W^\b(\diver;\O)$, there exists for given
$\veps >0$ a function $q \in C^\infty(\overline{\O};\R^d)$ with
\[
\|p-q\|_{H(\diver;\O)} < \veps/2.
\]
Standard nodal interpolation estimates yield
\[
\|q-\cI_h q\|_{W^\b(\diver;\O)} \leq \|q-\cI_h q\|_{W^{1,\b}(\O;\R^d)} \leq c h |q|_{W^{2,\infty}(\O;\R^d)}.
\]
Now let~$h$ be such that
\[
\|q-\cI_h q\|_{W^\b(\diver;\O)} < \veps/2.
\]
Choosing $q_h=\cI_h q$ and using the triangle inequality yields the assertion.
\end{proof}

For an element $T \in \cT_h$ and $p_h \in P_k(T)^r$ we have by an
inverse estimate
\[
\|p_h\|_{L^2(T)}^2 \leq c h_T^{2\min\{0,d/2-d/\a\}}\|p_h\|_{L^\a(T)}^2,
\]
cf.~\cite{BreSco08}. Hence, we may introduce for $1\leq \a < 2$ the weighted $L^2$-inner product
\[
(p_h,q_h)_{w_\a} = (h_\cT^{d(2/\a - 1)}p_h,q_h)
\]
for $p_h,q_h \in \cL^k(\cT_h)$. Its induced norm then has the property
$\|\cdot\|_{w_\a} \leq c \|\cdot\|_{L^\a(\Omega)}$ on~$\cL^k(\cT_h)$.\\
\\
Let us finally introduce the so called \textit{Brezzi-Douglas-Marini (BDM)} finite element space
which is given by
\[
\mathcal{BDM}(\O)=\cL^1(\cT_h)^d \cap H(\diver;\O) \subset H(\diver;\O),
\]
cf.~\cite{BofBreFor13}. For an element $T \in \cT_h$ we can define a local
interpolation operator $\Pi_{h,T}: H^1(T)^d \to P_1(T)^d$ by
\[
\int_S q\cdot n \psi \dv{s} = \int_S \Pi_{h,T}q\cdot n \psi \dv{s}
\]
for all sides $S \in \cS_h \cap T$ of the element~$T$ and all affine functions $\psi \in P_1(S)$
on~$S$. Note that the interpolation operator is well-defined also for less regular functions, e.g.,
for $q \in H(\diver;T) \cap L^\g(T;\R^d)$ with $\g >2$, cf.~\cite{BofBreFor13}.
The global interpolation operator
$\Pi_h: H^1(\O)^d \to \mathcal{BDM}(\O)$ is then defined by
\[
(\Pi_h q)|_T = \Pi_{h,T}(q|_T)
\]
and, in particular, $\Pi_h q \in \mathcal{BDM}(\O) \subset H(\diver;\O)$.
For more details on $H(\diver;\O)$-conforming finite element spaces we refer the reader
to~\cite{BofBreFor13}.

\section{Abstract error estimate}\label{sec:abstract error estimate}

In the following we recap the existing results
on abstract a posteriori error estimation
for convex minimization problems and refer
to~\cite{Rep99,Rep00,Rep00:2,Bar15:2} for further details.

Let $F:Y \to \Rinf$ and $G:X \to \Rinf$ be proper, convex and lower-semicontinuous
functionals and $B:X \to Y$ be bounded and linear.
Under these hypothesis there holds $F=(F^*)^*$ and we obtain
\[\begin{split}
\inf_{u \in X} E(u) &= \inf_{u\in X} F(Bu) + G(u) \\
&= \inf_{u \in X} \sup_{p \in Y'} \; \langle p,Bu \rangle_{Y',Y} - F^*(p) + G(u) \\
&\geq \sup_{p \in Y'} \inf_{u \in X}  \; - F^*(p) + \langle p,Bu \rangle_{Y',Y} + G(u) \\
&= \sup_{p \in Y'} - \sup_{u \in X}  \;  F^*(p) + \langle -B'p,u \rangle_{X',X} - G(u)\\
&= \sup_{p \in Y'} \;  -F^*(p) - G^*(-B'p)\\
&=: \sup_{p \in Y'} D(p).
\end{split}\]
Hence, the dual formulation seeks a maximizer $p \in Y'$ for~$D$.
Particularly, we have the weak duality relation 
\begin{equation}\label{eq:weak duality}
E(v) \ge D(q)
\end{equation}
for all $v \in X$ and $q\in Y'$. If $u \in X$ is a minimizer for~$E$,
the necessary optimality condition reads
\[
0 \in \p E(u).
\]
With a nonnegative coercivity functional $\vrho_E:X \times X \to [0,\infty)$
this is equivalent to
\begin{equation}\label{eq:a priori}
\vrho_E(v,u) + E(u) \leq E(v)
\end{equation}
for all $v \in X$. A combination of~\eqref{eq:weak duality} and~\eqref{eq:a priori}
yields the following abstract a posteriori error estimate.

\begin{proposition}[Primal-dual gap estimates]\label{prop:abstract a posteriori}
Let $X_h \subset X$ and $Y_h \subset Y'$ and $u\in X$ and $u_h\in X_h$
be minimial for~$E$ in~$X$ and~$X_h$, respectively. We then have the a priori
error estimate
\[
\vrho_E(u,u_h) \le E(u_h)-E(u) \le \inf_{v_h\in X_h} E(v_h) - E(u).
\]
For any $w_h \in X_h$ and $q_h\in Y_h$ we have with $\eta(w_h,q_h):=(E(w_h)-D(q_h))^{1/2}$
the a posteriori error estimate
\[
\vrho_E(u,u_h) \le \eta^2(w_h,q_h).
\]
\end{proposition}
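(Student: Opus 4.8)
The plan is to obtain both estimates by chaining together three ingredients already established in the excerpt: the coercivity inequality~\eqref{eq:a priori} at the exact minimizer~$u$, the discrete minimality of~$u_h$ in~$X_h$, and the weak duality relation~\eqref{eq:weak duality}. No genuinely hard analytic step is involved; the entire content lies in assembling these facts in the right order, and in recognizing that the feasible test functions~$w_h$ and~$q_h$ enter only to make the right-hand side computable.

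First I would prove the a priori estimate. Since~$u$ minimizes~$E$ over all of~$X$, it satisfies the optimality condition~$0\in\p E(u)$, which is encoded in~\eqref{eq:a priori}. Testing that inequality at the minimizer~$u$ with~$v=u_h$ (legitimate because~$u_h\in X_h\subset X$) gives
\[
\vrho_E(u,u_h)+E(u)\le E(u_h),
\]
that is, $\vrho_E(u,u_h)\le E(u_h)-E(u)$, which is the asserted lower bound. For the upper bound I would only use that~$u_h$ is minimal for~$E$ over~$X_h$, so that~$E(u_h)=\inf_{v_h\in X_h}E(v_h)$; subtracting~$E(u)$ then yields the right-hand inequality, which is in fact an equality.

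For the a posteriori estimate I would start from the lower bound just derived and successively weaken the right-hand side using admissible data. First, discrete minimality~$E(u_h)\le E(w_h)$ for every~$w_h\in X_h$ lets me replace~$E(u_h)$ by~$E(w_h)$. Second, weak duality~\eqref{eq:weak duality} applied to the minimizer~$u$ and an arbitrary~$q_h\in Y_h\subset Y'$ gives~$E(u)\ge D(q_h)$, hence~$-E(u)\le -D(q_h)$. Combining these produces the chain
\[
\vrho_E(u,u_h)\le E(u_h)-E(u)\le E(w_h)-E(u)\le E(w_h)-D(q_h)=\eta^2(w_h,q_h),
\]
which is exactly the claimed bound.

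The argument is elementary once the abstract duality framework is in place, so rather than a technical obstacle the point requiring care is conceptual: the coercivity estimate~\eqref{eq:a priori} must be invoked at the \emph{exact} minimizer~$u$, where the full optimality condition~$0\in\p E(u)$ holds, and not at the discrete minimizer~$u_h$, whose optimality only accounts for variations within~$X_h$. Note also that neither inequality uses any optimality of~$w_h$ or~$q_h$; they may be arbitrary feasible functions for the primal and dual problems. This is precisely the robustness property emphasized in the introduction, namely that~$\eta(w_h,q_h)$ remains a reliable upper bound irrespective of the iterative solver used to approximate the discrete primal and dual solutions.
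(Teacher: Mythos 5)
Your proof is correct and follows essentially the same route as the paper: coercivity~\eqref{eq:a priori} tested at the exact minimizer~$u$ with $v=u_h$, discrete minimality of~$u_h$ to pass from $E(u_h)$ to $E(w_h)$, and weak duality~\eqref{eq:weak duality} with $Y_h\subset Y'$ to replace $-E(u)$ by $-D(q_h)$. The paper's displayed chain contains minor typographical slips (swapping $u_h$ and $w_h$), and your write-up is in fact the cleaner rendering of the intended argument.
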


\begin{proof}
The a priori error estimate is a direct consequence of~\eqref{eq:a priori}. Using the
optimality~\eqref{eq:a priori} of $u \in X$, the weak duality~\eqref{eq:weak duality}
and $Y_h \subset Y'$ we then obtain
\[
\vrho_E(u,u_h) \le E(w_h) - E(u) \leq E(u_h) - \sup_{p \in Y'} D(p) \leq E(w_h) - D(q_h),
\]
which concludes the proof.
\end{proof}

\begin{remarks}
1. Note that in case of strong duality, i.e., there holds equality
in~\eqref{eq:weak duality}, the a posteriori error estimate stated in
Proposition~\ref{prop:abstract a posteriori} is sharp in the sense that if we use
$w_h=u$ and $q_h=p$ in~$\eta$ with $u\in X$ and $p \in Y'$ being
solutions to the primal and the dual problem, respectively, we have
\[
\eta^2(u,p) = E(u) - D(p) = \inf_{v \in X} E(v) - \sup_{q \in Y'} D(q) = 0.
\]
Sufficient for strong duality is that there exists $w \in X$ with $F(Bw)<\infty$,
$G(w)<\infty$ and~$F$ being continuous at~$Bw$. In this case the solutions
are related by the inclusions
\[
-B'p \in \p G(u), \quad p\in \p F(Bu),
\]
cf.~\cite{EkeTem99}, which are equivalent to the variational inequalities
\[\begin{split}
\langle -B'p,v-u\rangle_{X',X} + \vrho_G(v,u) + G(u) &\leq G(v),\\
\langle p,Bv-Bu\rangle_{Y',Y} + \vrho_F(Bv,Bu) + F(Bu) &\leq F(Bv).
\end{split}\]
Adding both inequalities gives~\eqref{eq:a priori} with
\[
\vrho_E(v,u) = \vrho_F(Bv,Bu) + \vrho_G(v,u),
\]
which serves as an error measure.\\
2. Let us emphasize that for the derivation of the reliability estimate for the
primal-dual gap error estimator~$\eta$ we did not need to make any assumptions
on the differentiability of the functionals~$F$ and~$G$.\\
3. One is free in the construction of feasible functions $w_h \in X_h$ and
$q_h \in Y_h$ to define the error estimator~$\eta(w_h,q_h)$. We will use
the Variable-ADMM introduced in~\cite{BarMil17} to approximately solve
the primal and the dual problem for the nonlinear Laplace problem and the ROF problem.
However, feasible functions, e.g., for the
dual problem, may be constructed using other techniques like gradient recovery or
flux reconstruction techniques, if they are applicable for the specific problem. The relation
between primal-dual gap error estimators and other error estimators is discussed
in~\cite{Rep00} for a certain class of convex minimization problems.
\end{remarks}

\section{Nonlinear Laplace equation}\label{sec:nonlinear Laplace}

\subsection{Primal and dual formulation}

The nonlinear Laplace problem seeks for $\s\in (1,\infty)$, $\s'=\s/(\s-1)$,
$f \in L^{\s'}(\O)$, $g \in L^{\s'}(\GN)$, $\tu_\DD \in W^{1,\s}(\O)$ and $u_\DD=\tu_\DD|_\GD$
a function $u\in W^{1,\s}(\O)$ which is minimal for 
\[
E_{\Delta_\s}(u) = \frac{1}{\s} \int_\O |\nabla u|^\s \dv{x} 
- \int_\O f u \dv{x} - \int_\GN g u \dv{s} + I_{u_\DD}(u|_\GD).
\]
The indicator functional~$I_{u_\DD}$ encodes the 
boundary condition $u|_\GD = u_\DD$ on 
$\GD = \p\O \setminus \GN$. The minimization problem admits a unique minimizer, cf. ~\cite{GloMar75}.
Minimization problems of the above structure arise in various areas of interest, e.g., nonlinear
diffusion~\cite{Phi61}, nonlinear elasticity~\cite{AtkCha84}, and fluid mechanics~\cite{AtkJon74,BarNaj90}.

Let us make the following assumption that will simplify the presentation.

\begin{assumption}
For ease of presentation we restrict to the case $g=0$ and
$u_\DD=0$ in what follows. We then omit the indicator functional~$I_{u_\DD}(u|_\GD)$
in the definition of~$E_{\Delta_\s}$ and seek for a minimizer $u\in W_\DD^{1,\s}(\O)$ instead.
\end{assumption}

The dual nonlinear Laplace problem seeks 
$p\in W_\NN^{\s'}(\diver;\O)$ that maximizes the functional
\[
D_{\Delta_\s}(p) := -\frac{1}{\s'} \int_\O |p|^{\s'} \dv{x}- I_{\{f\}}(-\diver p).
\]

The following result (cf.~\cite[Thm. 1]{Rep00:2}) shows that the dual
nonlinear Laplace problem is in fact the dual problem to the primal nonlinear
Laplace problem in the sense of Fenchel duality. It further ensures the strong
duality between the primal and the dual nonlinear Laplace problem.

\begin{theorem}[Strong duality]\label{thm:strong duality pLaplace}
There exists a unique minimizer $u \in W_\DD^{1,\s}(\O)$ for~$E_{\Delta_\s}$ and a unique
maximizer $p \in W_\NN^{\s'}(\diver;\O)$ for~$D_{\Delta_\s}$. The functions~$u$ and~$p$
are related by $\diver p = -f$, $p=|\nabla u|^{\s-2}\nabla u$
(or, equivalently, $\nabla u = |p|^{\s'-2}p$) and
\[
E_{\Delta_\s}(u) = D_{\Delta_\s}(p).
\]
\end{theorem}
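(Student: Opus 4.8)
The plan is to establish the theorem in three parts: identifying the dual problem via Fenchel duality, verifying strong duality, and deriving the extremality relations between the primal and dual solutions.

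First I would cast the primal problem into the abstract framework $E(u)=F(Bu)+G(u)$ developed in Section~\ref{sec:abstract error estimate}. The natural choice is $X=W_\DD^{1,\s}(\O)$, $Y=L^\s(\O;\R^d)$, $B=\nabla$, together with
\[
F(q)=\frac{1}{\s}\int_\O |q|^\s \dv{x}, \qquad G(u)=-\int_\O fu \dv{x}.
\]
One then computes the Fenchel conjugates explicitly. For $F$, a pointwise Legendre-type calculation gives $F^*(p)=\frac{1}{\s'}\int_\O |p|^{\s'}\dv{x}$ on $Y'=L^{\s'}(\O;\R^d)$, using the elementary identity that the conjugate of $t\mapsto |t|^\s/\s$ on $\R^d$ is $|t|^{\s'}/\s'$. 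The conjugate $G^*(-B'p)=G^*(\diver p)$ forces the linear constraint: since $G$ is linear in $u$, its conjugate is the indicator of the singleton where the linear functional matches, yielding the condition $-\diver p = -f$, i.e. $\diver p = -f$, and hence the term $I_{\{f\}}(-\diver p)$. Feeding these into $\sup_{p}\bigl(-F^*(p)-G^*(-B'p)\bigr)$ reproduces exactly $D_{\Delta_\s}(p)$, confirming the claimed dual problem; the normal trace condition $p\in W_\NN^{\s'}(\diver;\O)$ arises from the integration-by-parts identity for $B'$ acting on $W_\DD^{1,\s}$.

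Second, I would invoke the sufficient condition for strong duality recorded in Remark~1 following Proposition~\ref{prop:abstract a posteriori}: it suffices to exhibit $w\in X$ with $F(Bw)<\infty$, $G(w)<\infty$, and $F$ continuous at $Bw$. Here $F$ is finite and continuous everywhere on $L^\s$ since $\s<\infty$, so taking $w=0$ suffices; this immediately yields $\inf_X E_{\Delta_\s}=\sup_{Y'} D_{\Delta_\s}$ with attainment. Existence and uniqueness of the primal minimizer $u$ follow from the strict (indeed uniform, via Clarkson-type inequalities) convexity and coercivity of $E_{\Delta_\s}$ on the reflexive space $W_\DD^{1,\s}(\O)$ through the direct method; uniqueness of the dual maximizer $p$ follows analogously from strict convexity of $|\cdot|^{\s'}$.

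Finally, the extremality relations come from the subdifferential inclusions in Remark~1, namely $-B'p\in\p G(u)$ and $p\in\p F(Bu)$. The first inclusion, written out, is precisely $\diver p = -f$. The second, $p\in\p F(\nabla u)$, reduces pointwise to $p=|\nabla u|^{\s-2}\nabla u$ because $F$ is differentiable (for $\s>1$) with this gradient; inverting this relation using $(\s-1)(\s'-1)=1$ gives the equivalent form $\nabla u=|p|^{\s'-2}p$. The main obstacle I anticipate is the careful justification of the duality pairing and the correct function-space setting for $B'$ and the normal-trace constraint, i.e. verifying that $-B'p$ indeed corresponds to $\diver p$ in the sense of the definition of $W_\NN^{\s'}(\diver;\O)$ and that the abstract $\sup$ over all of $Y'$ may legitimately be restricted to this constrained space; the pointwise convex-analysis computations of $F^*$ and $\p F$ are routine by comparison.
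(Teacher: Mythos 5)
Your proposal is correct and takes essentially the same route as the paper: the paper's proof is a one-line appeal to ``standard arguments in duality theory'' (citing~\cite{Rep00:2,EkeTem99}), and what you write out --- the Fenchel conjugates $F^*(p)=\frac{1}{\s'}\int_\O|p|^{\s'}\dv{x}$ and $G^*$ as an indicator enforcing the divergence constraint, the continuity-of-$F$ criterion from Remark~1 after Proposition~\ref{prop:abstract a posteriori} giving strong duality with dual attainment, the direct method plus strict convexity for existence and uniqueness, and the extremality relations $-B'p\in\p G(u)$, $p\in\p F(\nabla u)$ --- is precisely that standard machinery made explicit. (One harmless sign slip: since $B'p=-\diver p$ for $p$ with vanishing normal trace on $\G_\NN$, the constraint $-B'p=-f$ reads $\diver p=-f$ directly, not ``$-\diver p=-f$''; your conclusion is nonetheless the correct one.)
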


\begin{proof}
The assertion follows from standard arguments in duality theory,
cf.~\cite{Rep00:2,EkeTem99}.
\end{proof}

Next, we introduce suitable finite element spaces for the primal
and dual nonlinear Laplace problem.

\subsection{Finite element spaces and a priori estimates}

To make use of the primal-dual gap estimator we need to choose conforming finite element spaces
$X_h \subset W_\DD^{1,\s}(\O)$ and $Y_h \subset W_\NN^{\s'}(\diver;\O)$. We let
\[
X_h=\cS^1(\cT_h) \cap W_\DD^{1,\s}(\O), \quad Y_h = \mathcal{BDM}(\O) \cap W_\NN^{\s'}(\diver;\O).
\]
For $f_h \in \cL^0(\cT_h)$ being the elementwise $L^2$-projection of~$f$ we set
\[\begin{split}
E_{\Delta_\s}^h(u_h) &= \frac{1}{\s} \int_\O |\nabla u_h|^\s \dv{x} 
- \int_\O f_h u_h \dv{x},\\
D_{\Delta_\s}^h(p_h) &= -\frac{1}{\s'} \int_\O |p_h|^{\s'} \dv{x}
- I_{\{f_h\}}(-\diver p_h), \\
\hD_{\Delta_\s}^h(p_h) &= -\frac{1}{\s'} \int_\O \hcI_h|p_h|^{\s'} \dv{x}
- I_{\{f_h\}}(-\diver p_h).
\end{split}\]

As it has been found out in earlier contributions, the energy
norm~$\|\nabla \cdot \|_{L^\s(\O)}$ is not well suited for the a priori
and a posteriori error analysis for the nonlinear Laplacian, since
one obtains convergence rates that are not optimal for a discretization with linear
finite elements, cf. \cite{BarLiu93}. Instead,
for a fixed function $v \in W^{1,\s}(\O)$, a so called \textit{quasi-norm} defined by
\[
\|\nabla w\|_{(v,\s)}^2 := \int_\O (|\nabla v|+|\nabla w|)^{\s-2}|\nabla w|^2 \dv{x}
\]
has been introduced and widely used in the literature, cf.
\cite{BarLiu93,EbmLiu05,LiuYan01,LiuYan01:2,LiuYan02,CarLiuYan06,DieKre08,BelDieKre12}.
Defining
\[
V(\nabla v) := |\nabla v|^{\frac{\s-2}{2}}\nabla v
\]
it has been shown in \cite{DieRuz07,DieKre08} that there exist constants $c,C>0$
with
\[
c \|\nabla v-\nabla w\|_{(v,\s)}^2 \leq \|V(\nabla v)-V(\nabla w)\|^2 \leq C \|\nabla v-\nabla w\|_{(v,\s)}^2.
\]
The following a priori estimate for the quasi-norm has been shown
in~\cite[Lem. 5.2]{DieRuz07}.

\begin{proposition}[A priori estimate]\label{prop:a priori primal p-laplace}
Let~$u$ and~$u_h$ be the minimizers for~$E_{\Delta_\s}$ in~$W_\DD^{1,\s}(\O)$ and
in~$X_h$, respectively. Then we have
\[
\|V(\nabla u) - V(\nabla u_h)\| \leq c \inf_{v_h \in X_h} \|V(\nabla u) - V(\nabla v_h)\|.
\]
If the minimizer~$u$ additionally satisfies $V(\nabla u) \in W^{1,2}(\O;\R^d)$, there holds
\[
\|V(\nabla u) - V(\nabla u_h)\| \leq c \inf_{v_h \in X_h} \|V(\nabla u) - V(\nabla v_h)\|
\leq c h \|\nabla V(\nabla u)\|.
\]
\end{proposition}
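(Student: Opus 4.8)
**The plan is to prove a Céa-type quasi-optimality estimate for the quasi-norm, followed by an interpolation estimate for the best-approximation error.**

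First, consider the structure. We have two inequalities. The right-hand inequality (under the regularity assumption $V(\nabla u) \in W^{1,2}(\Omega;\mathbb{R}^d)$) is a straightforward interpolation estimate once we have the left inequality: choosing $v_h = \mathcal{I}_h u$, we need $\|V(\nabla u) - V(\nabla \mathcal{I}_h u)\| \le c\, h\, \|\nabla V(\nabla u)\|$. The genuinely substantial part is the left inequality, the quasi-optimality (Céa-type) result, which I would establish first.

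For the quasi-optimality, the natural route is to combine the equivalence $c \|\nabla v - \nabla w\|_{(v,\sigma)}^2 \le \|V(\nabla v) - V(\nabla w)\|^2 \le C \|\nabla v - \nabla w\|_{(v,\sigma)}^2$ stated in the excerpt with the coercivity structure encoded in Proposition~\ref{prop:abstract a posteriori}. Concretely, I would exploit that $u_h$ minimizes $E_{\Delta_\sigma}$ over $X_h$, so for any $v_h \in X_h$ we have $E_{\Delta_\sigma}(u_h) \le E_{\Delta_\sigma}(v_h)$. The key is to control the energy difference $E_{\Delta_\sigma}(w) - E_{\Delta_\sigma}(u)$ from above and below by the quasi-norm $\|\nabla w - \nabla u\|_{(u,\sigma)}^2$ (equivalently $\|V(\nabla w) - V(\nabla u)\|^2$). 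The lower bound is exactly the coercivity functional $\varrho_E$ from~\eqref{eq:a priori}, which for the nonlinear Laplacian is known to be comparable to the quasi-norm; the upper bound follows from convexity estimates on the integrand $|\cdot|^\sigma/\sigma$. Chaining
\[
\|V(\nabla u) - V(\nabla u_h)\|^2 \lesssim E_{\Delta_\sigma}(u_h) - E_{\Delta_\sigma}(u) \le E_{\Delta_\sigma}(v_h) - E_{\Delta_\sigma}(u) \lesssim \|V(\nabla u) - V(\nabla v_h)\|^2
\]
and taking the infimum over $v_h \in X_h$ yields the first inequality.

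The main obstacle is establishing the two-sided comparison of the energy difference with the quasi-norm, because the orthogonality argument familiar from the linear case is unavailable: the Euler--Lagrange operator is nonlinear, so there is no clean Galerkin orthogonality and no Pythagorean splitting. The upper bound $E_{\Delta_\sigma}(v_h) - E_{\Delta_\sigma}(u) \lesssim \|V(\nabla u) - V(\nabla v_h)\|^2$ requires the \emph{shifted} convexity/smoothness estimates for the $\sigma$-energy density (the N-function framework of Diening--R\r{u}\v{z}i\v{c}ka), where the second-order behavior of $t \mapsto |t|^\sigma/\sigma$ is measured against the shift $|\nabla u|$; this is precisely where the quasi-norm weight $(|\nabla u| + |\nabla w|)^{\sigma - 2}$ enters. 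I would invoke the equivalence stated in the excerpt together with the optimality relation $\langle |\nabla u|^{\sigma-2}\nabla u, \nabla(v_h - u)\rangle = \int_\Omega f(v_h - u)\,\mathrm{d}x$ to absorb the cross terms, noting that $u_h$'s optimality kills the first-order contribution. Since the excerpt cites~\cite[Lem. 5.2]{DieRuz07} for the full statement, the cleanest presentation is to reduce everything to the cited N-function estimates rather than reprove them, and then complete the second inequality by the interpolation bound $\|V(\nabla u) - V(\nabla \mathcal{I}_h u)\| \le c\,h\,\|\nabla V(\nabla u)\|$, valid since $V(\nabla u) \in W^{1,2}$.
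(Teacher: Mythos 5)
Your proposal is correct and takes essentially the same route as the paper, whose entire proof consists of the citation of~\cite{DieRuz07}: the chain you describe --- coercivity of the energy difference at the minimizer (the content of Proposition~\ref{prop:uniform convexity pLaplace}), discrete minimality of~$u_h$, the shifted-convexity upper bound $E_{\Delta_\s}(v_h)-E_{\Delta_\s}(u)\le c\,\|V(\nabla u)-V(\nabla v_h)\|^2$ via the Euler--Lagrange relation, and finally a quasi-norm interpolation estimate --- is exactly the argument of that reference. The only caveat is that the estimate $\|V(\nabla u)-V(\nabla \Pi_h u)\|\le c\,h\,\|\nabla V(\nabla u)\|$ is proved in~\cite{DieRuz07} for Scott--Zhang-type quasi-interpolation operators rather than the Lagrange nodal interpolant $\mathcal{I}_h u$, so the final step should invoke that operator, which is consistent with your own suggestion to reduce everything to the cited N-function estimates rather than reprove them.
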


\begin{proof}
A complete proof is given in~\cite{DieRuz07}.
\end{proof}

\begin{remark}
Under certain regularity assumptions on the data~$f$ and the boundary~$\p \O$ one can prove
$V(\nabla u) \in W^{1,2}(\O;\R^d)$, cf.~\cite{Ebmeyer05,EbmLiuSte05}. In general,
one may only expect $V(\nabla u) \in W^{1,\g}(\O;\R^d)$ for some $\g >1$ and, in this case,
\[
\|V(\nabla u) - V(\nabla u_h)\| \leq c h^s
\]
with $s=\min\{1,2-2/\g\}$, cf.~\cite[Rem. 5.2]{BelDieKre12}.
\end{remark}

To obtain an a posteriori error estimate in the style of Proposition~\ref{prop:abstract a posteriori}
we need to bound the error in the quasi-norm by the energy difference.
This is established in~\cite[Lem. 16]{DieKre08} for the difference between two
finite element solutions of the nonlinear Laplace problem on nested finite element
spaces.

\begin{proposition}[{\cite[Lem. 16]{DieKre08}}]\label{prop:uniform convexity pLaplace}
Let~$u \in W_\DD^{1,\s}(\O)$ be the unique minimizer of~$E_{\Delta_\s}$ and
$v_h \in X_h$ be arbitrary. Then we have
\[
c\|V(\nabla u)-V(\nabla v_h)\|^2 \leq E_{\Delta_\s}(v_h) - E_{\Delta_\s}(u).
\]
\end{proposition}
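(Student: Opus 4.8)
The plan is to exploit the optimality of the continuous minimizer $u$ to rewrite the energy difference as an integrated \emph{Bregman distance} for the strictly convex integrand $\phi(A):=\frac{1}{\s}|A|^\s$, $A\in\R^d$, and then to invoke a pointwise equivalence between this Bregman distance and $|V(A)-V(B)|^2$. First I would record the Euler--Lagrange equation. Since $1<\s<\infty$, the map $\phi$ is continuously differentiable with $D\phi(A)=|A|^{\s-2}A$, and $w\mapsto \int_\O \phi(\nabla w)\dv{x}$ is G\^ateaux differentiable on $W_\DD^{1,\s}(\O)$ because $|\nabla u|^{\s-2}\nabla u\in L^{\s'}(\O)$. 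As $u$ minimizes $E_{\Delta_\s}$ over all of $W_\DD^{1,\s}(\O)$ and $v_h-u\in W_\DD^{1,\s}(\O)$ is an admissible test direction, optimality yields
\[
\int_\O |\nabla u|^{\s-2}\nabla u\cdot\nabla(v_h-u)\dv{x} = \int_\O f(v_h-u)\dv{x}.
\]

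Substituting this identity into the energy difference gives
\[
E_{\Delta_\s}(v_h)-E_{\Delta_\s}(u) = \int_\O \bigl(\phi(\nabla v_h)-\phi(\nabla u)-D\phi(\nabla u)\cdot\nabla(v_h-u)\bigr)\dv{x}.
\]
The integrand is exactly the pointwise Bregman distance of $\phi$ with base point $\nabla u(x)$ and argument $\nabla v_h(x)$. It is nonnegative by convexity of $\phi$ (which already recovers the a priori nonnegativity used in Proposition~\ref{prop:abstract a posteriori}), but the claim requires the quantitative lower bound.

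The crux is therefore the pointwise inequality that, for all $A,B\in\R^d$, there is $c=c(\s,d)>0$ with
\[
\phi(B)-\phi(A)-D\phi(A)\cdot(B-A) \geq c\,|V(A)-V(B)|^2 .
\]
I would establish this through the Hessian $D^2\phi(A)=|A|^{\s-2}\bigl(I+(\s-2)|A|^{-2}A\otimes A\bigr)$, whose smallest eigenvalue is $\min\{1,\s-1\}|A|^{\s-2}$; writing the Bregman distance via the integral remainder
\[
\phi(B)-\phi(A)-D\phi(A)\cdot(B-A) = \int_0^1 (1-t)\,(B-A)^\TT D^2\phi\bigl(A+t(B-A)\bigr)(B-A)\dv{t}
\]
reduces the claim to the weighted estimate $\int_0^1 (1-t)|A+t(B-A)|^{\s-2}\dv{t}\geq c\,(|A|+|B|)^{\s-2}$ together with the equivalence $|V(A)-V(B)|^2 \approx (|A|+|B|)^{\s-2}|A-B|^2$, the latter being the pointwise form of the two-sided bound for $V$ recorded before Proposition~\ref{prop:a priori primal p-laplace}. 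This pointwise estimate is the main obstacle and the genuinely technical ingredient: it is the standard $N$-function computation of~\cite{DieRuz07,DieKre08}, delicate because the weight $|A|^{\s-2}$ blows up at the origin for $\s<2$ and degenerates for $\s>2$, yet the constant $c$ is independent of $A$ and $B$. Integrating the pointwise bound over $\O$ with $A=\nabla u(x)$ and $B=\nabla v_h(x)$ then yields $c\|V(\nabla u)-V(\nabla v_h)\|^2 \leq E_{\Delta_\s}(v_h)-E_{\Delta_\s}(u)$, as asserted.
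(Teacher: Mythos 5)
Your proposal is correct and is essentially the paper's own argument: the paper proves this proposition simply by invoking \cite[Lem.~16]{DieKre08} (with the remark that the minimality of the coarse-space function is never used there, so an arbitrary $v_h \in X_h$ is admissible), and the proof of that cited lemma is precisely the route you follow — use the Euler--Lagrange equation of the exact minimizer $u$ to rewrite the energy difference as the integrated Bregman distance of $\phi(A)=\frac{1}{\s}|A|^\s$, then apply the pointwise (shifted $N$-function) equivalence between this Bregman distance, $(|A|+|B|)^{\s-2}|A-B|^2$, and $|V(A)-V(B)|^2$. The only difference is presentational: you reconstruct the argument directly in the continuous setting and correctly isolate the pointwise equivalence as the technical crux (citing it rather than proving it, which is acceptable since the paper itself defers to the same references for exactly this fact), whereas the paper gives no argument at all beyond the citation and the nestedness remark.
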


\begin{proof}
A proof is presented in~\cite[Lem. 16]{DieKre08}, where the error between two
minimizers~$u_h \in X_h$ and~$u_{h'} \in X_{h'}$ of~$E_{\Delta_\s}$ in nested spaces
$X_h \subset X_{h'} \subset W_\DD^{1,\s}(\O)$ is considered. However, the
minimality property of~$u_h$ is not used so that we may replace it by
any test function $v_h \in X_h$, see also~\cite[Lem. 3.2, Rem. 3.3]{BelDieKre12}.
We refer the reader to~\cite[Lem. 16]{DieKre08} for details.
\end{proof}

The previous proposition enables us to follow the arguments for the a posteriori
error analysis presented in the abstract setting.

\subsection{A posteriori estimate and error estimator}

By Proposition~\ref{prop:uniform convexity pLaplace} and the strong duality
ensured by Theorem~\ref{thm:strong duality pLaplace} we obtain an a
posteriori error estimate and an error estimator in the fashion
of Proposition~\ref{prop:abstract a posteriori},
which can be used for adaptive local mesh refinement. The next result
is a special case of Proposition~\ref{prop:abstract a posteriori} for the nonlinear
Laplace problem, where also the data approximation error is taken into account.

\begin{proposition}[A posteriori estimate]\label{prop:a posteriori pLaplace}
Let~$u$ and~$u_h$ be the unique minimizers for~$E_{\Delta_\s}$ in~$W_\DD^{1,\s}(\O)$
and for~$E_{\Delta_\s}^h$ in~$X_h$, respectively, and let~$p_h$
be the unique maximizer for~$D_{\Delta_\s}^h$ in~$Y_h$. Then we have for
any $v_h \in X_h$ and $q_h \in Y_h$ with $\diver q_h = -f_h$ that
\[
c\|V(\nabla u)-V(\nabla u_h)\|^2 \leq \eta_{\Delta_\s}^h(v_h,q_h)^2 + c\|p_h\|_{L^{\s'}(\O)}^{1/(\s-1)}\|f-f_h\|_{L^{\s'}(\O)},
\]
with $\eta_{\Delta_\s}^h(v_h,q_h)^2=E_{\Delta_\s}^h(v_h) - D_{\Delta_\s}^h(q_h)$.
\end{proposition}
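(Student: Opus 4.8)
The plan is to reduce the assertion to the abstract mechanism of Proposition~\ref{prop:abstract a posteriori}, carefully accounting for the two data perturbations caused by replacing $f$ with its elementwise $L^2$-projection $f_h$. First I would apply the uniform convexity bound of Proposition~\ref{prop:uniform convexity pLaplace} with the particular choice $v_h = u_h$, which gives
\[
c\|V(\nabla u)-V(\nabla u_h)\|^2 \le E_{\Delta_\s}(u_h) - E_{\Delta_\s}(u),
\]
so that it remains to bound the continuous energy difference. Using strong duality (Theorem~\ref{thm:strong duality pLaplace}) in the form $E_{\Delta_\s}(u) = D_{\Delta_\s}(p)$ and inserting the optimal discrete values, I would write $E_{\Delta_\s}(u_h)-E_{\Delta_\s}(u) = \mathrm{I}+\mathrm{II}+\mathrm{III}$ with $\mathrm{I}=E_{\Delta_\s}(u_h)-E_{\Delta_\s}^h(u_h)$, $\mathrm{II}=E_{\Delta_\s}^h(u_h)-D_{\Delta_\s}^h(p_h)$ and $\mathrm{III}=D_{\Delta_\s}^h(p_h)-D_{\Delta_\s}(p)$, and estimate the three terms separately.

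The term $\mathrm{II}$ is the discrete duality gap between the optimal primal and dual solutions and is controlled by the estimator evaluated at arbitrary feasible functions: minimality of $u_h$ gives $E_{\Delta_\s}^h(u_h)\le E_{\Delta_\s}^h(v_h)$ and maximality of $p_h$ gives $D_{\Delta_\s}^h(p_h)\ge D_{\Delta_\s}^h(q_h)$, so that $\mathrm{II}\le \eta_{\Delta_\s}^h(v_h,q_h)^2$. This is precisely the step that lets $v_h,q_h$ remain free in the estimator while the data term keeps the optimal $p_h$.

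For $\mathrm{III}$, the dual data error, the difficulty is that $\diver p_h=-f_h$, so $p_h$ is not admissible for the continuous dual problem. I would therefore solve the divergence equation $\diver r=f_h-f$ in $W_\NN^{\s'}(\diver;\O)$ with the right-inverse bound $\|r\|_{L^{\s'}(\O)}\le c\|f-f_h\|_{L^{\s'}(\O)}$ (solvable since the projection preserves elementwise means, so any compatibility condition is met), and set $\hat p_h:=p_h+r$, which satisfies $\diver\hat p_h=-f$. Weak duality then gives $D_{\Delta_\s}(p)\ge D_{\Delta_\s}(\hat p_h)$, and convexity of $q\mapsto|q|^{\s'}/\s'$ together with H\"older's inequality yields $\mathrm{III}\le D_{\Delta_\s}^h(p_h)-D_{\Delta_\s}(\hat p_h)\le \|p_h+r\|_{L^{\s'}(\O)}^{\s'-1}\|r\|_{L^{\s'}(\O)}$. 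Absorbing the higher-order $r$-contribution and using $\s'-1=1/(\s-1)$ produces the advertised term $c\|p_h\|_{L^{\s'}(\O)}^{1/(\s-1)}\|f-f_h\|_{L^{\s'}(\O)}$.

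The main obstacle is the primal data error $\mathrm{I}=\int_\O(f_h-f)u_h\dv{x}$, which is nonzero and not literally of the stated form. Here I would exploit that $f_h$ is the elementwise $L^2$-projection, so that $\int_\O(f_h-f)u_h\dv{x}=\int_\O(f_h-f)(u_h-\overline{u}_h)\dv{x}$ for the elementwise mean $\overline{u}_h$ of $u_h$, and an interpolation estimate bounds this by $c\|h_\cT\nabla u_h\|_{L^\s(\O)}\|f-f_h\|_{L^{\s'}(\O)}$. Since $p_h\approx|\nabla u_h|^{\s-2}\nabla u_h$ and $(\s-1)\s'=\s$ give the norm equivalence $\|p_h\|_{L^{\s'}(\O)}^{\s'-1}\sim\|\nabla u_h\|_{L^\s(\O)}$, this primal contribution carries an extra factor $h$ and is dominated by $\mathrm{III}$, hence absorbed into the generic constant. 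Combining the three estimates gives the claim; the delicate points I expect are the construction of the divergence correction $r$ with the sharp exponent $\s'-1$ and the reconciliation of the data oscillation in $\mathrm{I}$ with the single data term in the statement.
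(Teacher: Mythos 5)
Your overall strategy is the same as the paper's: uniform convexity (Proposition~\ref{prop:uniform convexity pLaplace}) with $v_h=u_h$, strong duality (Theorem~\ref{thm:strong duality pLaplace}), the identical three-term splitting $\mathrm{I}+\mathrm{II}+\mathrm{III}$, discrete optimality of $u_h$ and $p_h$ to get $\mathrm{II}\le\eta_{\Delta_\s}^h(v_h,q_h)^2$, and a divergence corrector to make $p_h$ admissible for the continuous dual problem. For $\mathrm{III}$ the paper realizes your abstract right inverse concretely, as $p^{(h)}=|\nabla w^{(h)}|^{\s-2}\nabla w^{(h)}$ for a nonlinear Neumann problem with datum $f-f_h$, which gives exactly the bound $\|p^{(h)}\|_{L^{\s'}(\O)}\le c\|f-f_h\|_{L^{\s'}(\O)}$ you postulate; your compatibility remark (elementwise means are preserved by the projection) is correct. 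In fact your version of $\mathrm{III}$ is more careful than the paper's: the convexity inequality must be applied at the corrected field $\hat p_h=p_h+r$, as you do, whereas the paper applies it at $p_h$, which strictly bounds the difference with the wrong sign; the price of the correct application is the extra higher-order data term $c\|f-f_h\|_{L^{\s'}(\O)}^{\s'}$, which cannot literally be ``absorbed'' into $c\|p_h\|_{L^{\s'}(\O)}^{1/(\s-1)}\|f-f_h\|_{L^{\s'}(\O)}$ but is harmless as a higher-order oscillation term.

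The genuine gap is in your treatment of $\mathrm{I}$. You invoke the ``norm equivalence'' $\|p_h\|_{L^{\s'}(\O)}^{\s'-1}\sim\|\nabla u_h\|_{L^\s(\O)}$ on the grounds that $p_h\approx|\nabla u_h|^{\s-2}\nabla u_h$. That constitutive relation links the \emph{continuous} solutions $u$ and $p$; it does not hold for the discrete pair $(u_h,p_h)$, which solve two decoupled optimization problems over unrelated spaces $X_h$ and $Y_h$ --- indeed, if it held, the discrete duality gap you are trying to estimate would essentially vanish. Fortunately only a one-sided bound is needed, and it can be proved: since $E_{\Delta_\s}^h(u_h)\le E_{\Delta_\s}^h(0)=0$, and since $u_h\in W_\DD^{1,\s}(\O)$, $p_h\in W_\NN^{\s'}(\diver;\O)$ with $\diver p_h=-f_h$ permit integration by parts,
\[
\frac1\s\|\nabla u_h\|_{L^\s(\O)}^\s\le\int_\O f_h u_h\dv{x}=\int_\O p_h\cdot\nabla u_h\dv{x}
\le\|p_h\|_{L^{\s'}(\O)}\|\nabla u_h\|_{L^\s(\O)},
\]
hence $\|\nabla u_h\|_{L^\s(\O)}\le\bigl(\s\|p_h\|_{L^{\s'}(\O)}\bigr)^{1/(\s-1)}$. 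With this repair your argument closes, and your projection-orthogonality/elementwise Poincar\'e refinement even yields an extra factor $h_\cT$. Note that the paper itself takes a cruder route here, bounding $\mathrm{I}\le\|u_h\|_{L^\s(\O)}\|f-f_h\|_{L^{\s'}(\O)}$ and $\|u_h\|_{L^\s(\O)}\le c\|\nabla u_h\|_{L^\s(\O)}\le c\|f_h\|_{L^{\s'}(\O)}^{1/(\s-1)}$, so its own proof produces a data term with $\|f_h\|$ in place of the stated $\|p_h\|$; your argument, once repaired as above, actually matches the stated bound more literally.
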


\begin{proof}
By Proposition~\ref{prop:uniform convexity pLaplace}, the strong duality
given by Theorem~\ref{thm:strong duality pLaplace} and the optimality of~$u_h$
and~$p_h$ in~$X_h$ and~$Y_h$, respectively, we have
\[\begin{split}
c\|V(\nabla u)-V(\nabla u_h)\|^2 \leq & \; E_{\Delta_\s}(u_h)-E_{\Delta_\s}(u) \\
= & \; E_{\Delta_\s}(u_h)-D_{\Delta_\s}(p) \\
= & \; E_{\Delta_\s}^h(u_h) - D_{\Delta_\s}^h(p_h) \\
&+ E_{\Delta_\s}(u_h)-E_{\Delta_\s}^h(u_h) + D_{\Delta_\s}^h(p_h) - D_{\Delta_\s}(p) \\
\leq & \; \eta_{\Delta_\s}^h(v_h,q_h)^2 \\
&+ E_{\Delta_\s}(u_h)-E_{\Delta_\s}^h(u_h) + D_{\Delta_\s}^h(p_h) - D_{\Delta_\s}(p).
\end{split}\]
Using $\|u_h\|_{L^\s(\O)} \leq c \|\nabla u_h\|_{L^\s(\O)} \leq c\|f_h\|_{L^{\s'}(\O)}^{1/(\s-1)}$
the first data approximation error can be estimated by
\[
E_{\Delta_\s}(u_h)-E_{\Delta_\s}^h(u_h) = \int_\O u_h (f-f_h) \dv{x}
\leq c\|f_h\|_{L^{\s'}(\O)}^{1/(\s-1)}\|f-f_h\|_{L^{\s'}(\O)}.
\]
To estimate the second error involving the discretization of the dual functional
we will construct a function $\widetilde{p}_h \in W_\NN^{\s'}(\diver;\O)$ for
which~$D_{\Delta_\s}$ is finite, i.e., $\diver \widetilde{p}_h=-f$, and which
relates~$p$ and~$p_h$. Let $w^{(h)} \in W^{1,\s}(\O)$ be the unique weak
solution with vanishing mean of
\[
-\diver(|\nabla w^{(h)}|^{\s-2}\nabla w^{(h)}) = f-f_h, \quad |\nabla w^{(h)}|^{\s-2}\nabla w^{(h)}\cdot n=0 \text{ on } \partial \Omega
\]
and set $p^{(h)}=|\nabla w^{(h)}|^{\s-2}\nabla w^{(h)}$. Then we have
$p^{(h)} \in W_\NN^{\s'}(\diver;\O)$ with
\[
\|p^{(h)}\|_{L^{\s'}(\O)} \leq c \|f-f_h\|_{L^{\s'}(\O)}.
\]
For $\widetilde{p}_h=p_h+p^{(h)}$ there holds $-\diver \widetilde{p}_h=f$, i.e.,
$D_{\Delta_\s}(\widetilde{p}_h) < \infty$.
With the optimality of~$p$ and the monotonicity
\[
|a|^{\s'} - |b|^{\s'} \leq {\s'} |a|^{\s'-2} a\cdot (a-b)
\]
for $a,b \in \R^d$ we can then bound the error $D_{\Delta_\s}^h(p_h) - D_{\Delta_\s}(p)$ by
\[\begin{split}
D_{\Delta_\s}^h(p_h) - D_{\Delta_\s}(p) \leq & \; D_{\Delta_\s}^h(p_h) - D_{\Delta_\s}(\widetilde{p}_h)\\
\leq &\; \int_\O |p_h|^{\s'-2}p_h \cdot (p_h-\widetilde{p}_h) \dv{x}\\
\leq & \; \||p_h|^{\s'-1}\|_{L^\s(\O)}\|p^{(h)}\|_{L^{\s'}(\O)} \\
\leq & \; c \|p_h\|_{L^{\s'}(\O)}^{1/(\s-1)}\|f-f_h\|_{L^{\s'}(\O)},
\end{split}\]
which completes the proof.
\end{proof}

\begin{remarks}
1. In our numerical experiments below the sequence of discrete solutions to the dual nonlinear
Laplace problem $(p_h)_{h>0}$ remained bounded in $L^{\s'}(\O)$. Unfortunately, we were
not able to prove this theoretically in general.\\
2. In Proposition~\ref{prop:positivity estimator pLaplace}
we prove that the density of the estimator is nonnegative.
\end{remarks}

\begin{remarks}\label{rem:estimator pLaplace}
1. Note that the (discrete) primal-dual gap error estimator~$\eta_{\Delta_\s}^h$
defines for arbitrary $v_h \in X_h$ and $q_h \in Y_h$ with $\diver q_h = -f_h$
a reliable upper bound (up to data oscillations) for the error
in the quasi-norm, i.e., we do not need to compute exact discrete solutions~$u_h$
and~$p_h$ of the primal and dual nonlinear Laplace problem, respectively.\\
2. The proof of the reliability of the primal-dual gap error estimator
did not require any differentiability assumptions on~$E_{\Delta_\s}$ or a
variational formulation of the primal nonlinear Laplace problem.\\
3. Using integration by parts and $\diver q_h = -f_h$ we obtain the expression
\[
\eta_{\Delta_\s}^h(v_h,q_h)^2  = \int_\O \frac{1}{\s}|\nabla v_h|^\s + \frac{1}{\s'}|q_h|^{\s'} - q_h \cdot \nabla v_h\dv{x}.
\]
4. In our numerical experiments we will use the computable (lumped) discrete
primal-dual gap error estimator
\[
\heta_{\Delta_\s}^h(v_h,q_h)^2=E_{\Delta_\s}^h(v_h) - \hD_{\Delta_\s}^h(q_h).
\]
As before, integration by parts and the relation $\diver q_h = -f_h$ yield
\[
\heta_{\Delta_\s}^h(v_h,q_h)^2  = \int_\O \frac{1}{\s}|\nabla v_h|^\s + \frac{1}{\s'}\hcI_h|q_h|^{\s'} - q_h \cdot \nabla v_h\dv{x}.
\]
\end{remarks}

For $T \in \cT_h$ the local error indicator is given by restriction
of the global error estimator to the element~$T$. We have the following
nonnegativity result.

\begin{proposition}\label{prop:positivity estimator pLaplace}
Let for any $T \in \cT_h$ the local error indicator be defined by
\[\begin{split}
\eta_{\Delta_\s}^{h,T}(v_h,q_h)^2 &= \int_T \frac{1}{\s}|\nabla v_h|^\s + \frac{1}{\s'}|q_h|^{\s'} - q_h \cdot \nabla v_h\dv{x},\\
\heta_{\Delta_\s}^{h,T}(v_h,q_h)^2  &= \int_T \frac{1}{\s}|\nabla v_h|^\s + \frac{1}{\s'}\hcI_h|q_h|^{\s'} - q_h \cdot \nabla v_h\dv{x}.
\end{split}\]
Then we have for any $v_h \in X_h$ and $q_h \in Y_h$
\[
\heta_{\Delta_\s}^{h,T}(v_h,q_h) \geq \eta_{\Delta_\s}^{h,T}(v_h,q_h) \geq 0.
\]
\end{proposition}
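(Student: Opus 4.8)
The plan is to verify both inequalities at the level of the integrands on a fixed element~$T$ and then integrate, since the two assertions reduce to pointwise statements of a rather different nature. The nonnegativity $\eta_{\Delta_\s}^{h,T}(v_h,q_h)\geq 0$ is a Young-type inequality, while the comparison $\heta_{\Delta_\s}^{h,T}(v_h,q_h)\geq\eta_{\Delta_\s}^{h,T}(v_h,q_h)$ exploits the convexity of $q\mapsto|q|^{\s'}$ together with the fact that $q_h$ is affine on~$T$.

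First I would establish nonnegativity of $\eta_{\Delta_\s}^{h,T}$. Because $1/\s+1/\s'=1$ with $\s,\s'\in(1,\infty)$, the Cauchy--Schwarz inequality followed by the scalar Young inequality applied to $|\nabla v_h|$ and $|q_h|$ gives, for almost every $x\in T$,
\[
q_h\cdot\nabla v_h \le |q_h|\,|\nabla v_h| \le \frac{1}{\s}|\nabla v_h|^\s + \frac{1}{\s'}|q_h|^{\s'}.
\]
Hence the integrand defining $\eta_{\Delta_\s}^{h,T}(v_h,q_h)^2$ is pointwise nonnegative, so its integral over~$T$ is nonnegative, and taking the square root yields $\eta_{\Delta_\s}^{h,T}(v_h,q_h)\geq 0$.

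For the comparison I would subtract the two squared indicators, which differ only in the treatment of the term involving $|q_h|^{\s'}$, to obtain
\[
\heta_{\Delta_\s}^{h,T}(v_h,q_h)^2 - \eta_{\Delta_\s}^{h,T}(v_h,q_h)^2 = \frac{1}{\s'}\int_T\bigl(\hcI_h|q_h|^{\s'}-|q_h|^{\s'}\bigr)\dv{x}.
\]
Thus it suffices to show $\hcI_h|q_h|^{\s'}\geq|q_h|^{\s'}$ on~$T$. The decisive observation is that $q_h|_T\in P_1(T)^d$, so $x\mapsto q_h(x)$ is affine; since $t\mapsto t^{\s'}$ is convex and nondecreasing on~$[0,\infty)$ for $\s'\geq 1$ and $q\mapsto|q|$ is convex, the composition $x\mapsto|q_h(x)|^{\s'}$ is convex on~$T$. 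For a convex function on the simplex~$T$, writing any point in barycentric coordinates and applying Jensen's inequality shows that its affine nodal interpolant dominates it pointwise, i.e. $\hcI_h|q_h|^{\s'}\geq|q_h|^{\s'}$ on~$T$. Integrating over~$T$ and combining with the first step gives $\heta_{\Delta_\s}^{h,T}(v_h,q_h)\geq\eta_{\Delta_\s}^{h,T}(v_h,q_h)\geq 0$ after taking square roots of nonnegative quantities.

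The arguments are elementary and I do not expect a genuine obstacle; the only point demanding care is the convexity step, where one must check that composing the convex map $q\mapsto|q|^{\s'}$ with the \emph{affine} (not merely polynomial) restriction $q_h|_T$ preserves convexity, and recall that $\hcI_h$ reproduces affine functions and interpolates at the vertices of~$T$, so that the dominance of the nodal interpolant over a convex function applies verbatim.
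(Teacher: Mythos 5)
Your proof is correct and follows essentially the same route as the paper's: pointwise Young's inequality (the paper phrases it via the Fenchel-conjugate identity $(1/\s')|b|^{\s'}=\sup_a a\cdot b-(1/\s)|a|^\s$, you via Cauchy--Schwarz plus scalar Young, which is the same estimate) for the nonnegativity of $\eta_{\Delta_\s}^{h,T}$, and convexity of $x\mapsto|q_h(x)|^{\s'}$ on~$T$ (since $q_h|_T$ is affine) to conclude $\hcI_h|q_h|^{\s'}\geq|q_h|^{\s'}$ for the comparison. The only difference is that you spell out the barycentric/Jensen argument for why the nodal interpolant dominates a convex function, which the paper leaves implicit.
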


\begin{proof}
Using that for an element $T \in \cT_h$ and $x \in T$ the mapping $x \mapsto |q_h(x)|^{\s'}$
is convex we conclude that $\hcI_h |q_h|^{\s'} \geq |q_h|^{\s'}$
on~$T$ since~$q_h|_T$ is affine, and, therefore,
\[
\heta_{\Delta_\s}^{h,T}(v_h,q_h)^2 \geq \eta_{\Delta_\s}^{h,T}(v_h,q_h)^2.
\]
Note that the integrand in the definition of~$\eta_{\Delta_\s}^{h,T}$
is nonnegative, because for arbitrary $b \in \R^d$
we have by Young's inequality
\[
(1/\s') |b|^{\s'} = \sup_{a\in \R^d} a\cdot b - (1/\s) |a|^\s.
\]
Particularly, we have
\[
\eta_{\Delta_\s}^{h,T}(v_h,q_h)^2 = \int_T \frac{1}{\s}|\nabla v_h|^\s + \frac{1}{\s'}|q_h|^{\s'} - q_h \cdot \nabla v_h\dv{x} \geq 0
\]
for every element $T \in \cT_h$. Hence, putting everything together, we arrive at
\[
\heta_{\Delta_\s}^{h,T}(v_h,q_h) \geq \eta_{\Delta_\s}^{h,T}(v_h,q_h) \geq 0
\]
for any $T \in \cT_h$.
\end{proof}

In the sequel we briefly discuss the explicit computation of the primal-dual
gap error estimator.

\subsection{Iterative solution}

As we have pointed out in Remark~\ref{rem:estimator pLaplace} the quantity~$\eta_{\Delta_\s}^h(v_h,q_h)$,
and therefore also~$\heta_{\Delta_\s}^h(v_h,q_h)$ by Proposition~\ref{prop:positivity estimator pLaplace},
defines a reliable upper bound for any feasible functions $v_h \in X_h$ and $q_h \in Y_h$.
Since the minimizer~$u_h$ of~$E_{\Delta_\s}^h$ in~$X_h$ and the maximizer~$p_h$
of~$D_{\Delta_\s}^h$ in~$Y_h$ are not directly available, a reasonable choice of
functions~$v_h$ and~$q_h$ with $\diver q_h = -f_h$ are approximate discrete
solutions of the primal and dual nonlinear Laplace problem. These will be computed
using splitting methods based on augmented Lagrange functionals, which have been
introduced in~\cite{GloMar75,GabMer76}. For the primal problem we define
\[\begin{split}
L_\tau^E (u_h,r_h;\l_h) &= \frac1\s \int_\O |r_h|^\s \dv{x} 
- \int_\O f_h u_h \dv{x} \\
&\quad + (\l_h, \nabla u_h - r_h)_{w_\s} + \frac{\tau}{2}\|\nabla u_h - r_h\|_{w_\s}^2
\end{split}\]
for $u_h \in X_h$ and $r_h,\l_h \in \cL^0(\cT_h)^d$.
For the dual problem we consider
\[\begin{split}
L_\tau^D(p_h,q_h;\mu_h) &= \frac{1}{\s'} \int_\O \hcI_h|q_h|^{\s'} \dv{x} + I_{\{-f_h\}}(\diver p_h) \\
&\quad + (\mu_h, p_h-q_h)_{h,w_{\s'}} + \frac{\tau}{2} \|p_h - q_h\|_{h,w_{\s'}}^2
\end{split}\]
for $q_h,\mu_h \in \cL^1(\cT_h)^d$ and $p_h \in Y_h$. The minimization
of~$E_{\Delta_\s}^h$ and~$-\hD_{\Delta_\s}^h$ is equivalent to seeking a saddle point
for~$L_\tau^E$ and~$L_\tau^D$, respectively, i.e.,
\[\begin{split}
\min_{u_h \in X_h} E_{\Delta_\s}^h(u_h) &= \min_{(u_h,r_h)\in X_h \times \cL^0(\cT_h)^d} \max_{\l_h \in \cL^0(\cT_h)^d} L_\tau^E (u_h,r_h;\l_h),\\
\min_{p_h \in Y_h} -\hD_{\Delta_\s}^h(p_h) &= \min_{(p_h,q_h)\in Y_h \times \cL^1(\cT_h)^d} \max_{\mu_h \in \cL^1(\cT_h)^d} L_\tau^D (p_h,q_h;\mu_h).
\end{split}\]
The associated saddle-point problems are then solved
using the Variable-ADMM, cf. ~\cite{BarMil17} for details.

%
%
%
%
%

\section{Rudin-Osher-Fatemi image denoising}\label{sec:ROF}

\subsection{Primal and dual formulation}

In this section we consider a variant of the nonlinear Laplacian with limit
exponent $\s=1$.
For a given function $g\in L^2(\O)$ and a fidelity parameter $\a>0$ we seek
a minimizer $u\in BV(\O)\cap L^2(\O)$ of the functional
\[
E_{\rm rof}(u) = \int_\O |\DD u| + \frac{\a}{2} \|u-g\|^2.
\]
This particular minimization problem has been proposed in image processing for denoising
a given noisy image~$g$ and is known as the Rudin-Osher-Fatemi (ROF) image denoising
problem~\cite{RudOshFat92}. It also serves as a model problem for
general $BV$-regularized minimization problems and evolutions, cf., e.g.,~\cite{Tho13}.
The (pre-)dual problem is given by the maximization of the functional
\[
D_{\rm rof}(p) = -\frac{1}{2\a} \|\diver p + \a g \|^2 + \frac{\a}{2} \|g\|^2 - I_{K_1(0)}(p)
\]
in the set of vector fields $p\in H_\NN(\diver;\O)$ with square integrable distributional
divergence and vanishing normal component on $\p\O$, cf.~\cite{KunHin04}.
The indicator functional~$I_{K_1(0)}$ of the set of vector fields $q\in L^2(\O;\R^d)$
which satisfy $|q|\le 1$ in~$\O$ introduces a pointwise constraint.
Note that a maximizer of~$D_{\rm rof}$ may not be unique. The primal and the dual ROF
problem are in strong duality and the unique minimizer
$u \in BV(\O)\cap L^2(\O)$ of~$E_{\rm rof}$ and any maximizer $p \in H_\NN(\diver;\O)$
of~$D_{\rm rof}$ are related by
\[
\diver p = \a (u-g), \quad -(u,\diver(q-p)) \leq 0
\]
for all $q \in H_\NN(\diver;\O) \cap K_1(0)$, cf.~\cite{KunHin04}.

\subsection{Finite element spaces and a priori estimates}

As for the nonlinear Laplace equation we let
\[
X_h = \cS^1(\cT_h)\subset BV(\O)\cap L^2(\O).
\]
The discrete space~$Y_h$ is chosen to consist of continuous or discontinuous,
elementwise affine vector fields
\[
Y_h^C=\cS^1(\cT_h)^d \cap H_\NN(\diver;\O), \quad \text{or} \quad Y_h^{dC} = \cL^1(\cT_h)^d \cap H_\NN(\diver;\O).
\]
We have the consistency relation $Y_h^C \subset Y_h^{dC} \subset H_\NN(\diver;\O)$
and denote by ~$Y_h$ either of the two spaces. Let $g_h \in \cL^0(\cT_h)$
be the elementwise $L^2$-projection of~$g$. The discretized functionals are then defined by
\[\begin{split}
E_{\rm rof}^h(u_h) &= \int_\O |\nabla u_h| \dv{x} + \frac{\a}{2}\|u_h-g_h\|^2,\\
D_{\rm rof}^h(p_h) &= -\frac{1}{2\a}\|\diver p_h + \a g_h\|^2 - I_{K_1(0)}(p_h) + \frac{\a}{2}\|g_h\|^2.
\end{split}\]

\begin{remark}
The discretization of the dual ROF problem with the lowest order Raviart-Thomas finite element
is not suitable since it does not include nodal degrees of freedom which is required to ensure
the pointwise constraint $|p_h|\leq 1$ which in turn is mandatory to derive a meaningful and
useful a posteriori error estimate.
\end{remark}

Let~$u$ and~$u_h$ be the unique minimizers of ~$E_{\rm rof}$ in $BV(\O) \cap L^2(\O)$
and~$X_h$, respectively. The strong convexity of~$E_{\rm rof}$ can be used to derive
the a priori error estimate
\[
\frac{\a}{2} \|u-u_h\|^2  \leq c h^{1/2}
\]
if $u \in BV(\O) \cap L^\infty(\O)$, cf.~\cite{BarNocSal14,Bar15}.
The optimal convergence rate for the approximation with continuous, piecewise linear
functions is, however, given by
\[
\min_{v_h\in \cS^1(\cT_h)} \|u-v_h\|^2 \le c h,
\]
which cannot be improved in general, cf.~\cite{BarNocSal14,Bar15}.\\
\\
Motivated by the relation $\diver p = \a (u-g)$ we also consider
for any discrete maximizer $p_h \in Y_h$ of~$D_{\rm rof}^h$
the approximation
\[
\overline{u}_h = \frac{1}{\a} \diver p_h + g_h \in \cL^0(\cT_h)
\]
of~$u$, for which the following convergence result can be proven.

\begin{proposition}\label{prop:piecewise constant approximation}
Let for any $h>0$ the function~$p_h$ be a discrete maximizer
of~$D_{\rm rof}^h$ in~$Y_h$ and let $\overline{u}_h = (1/\a) \diver p_h + g_h$.
If $g_h \to g$ in~$L^2(\O)$, we have
\[
\|u-\overline{u}_h\| \to 0
\]
as $h \to 0$.
\end{proposition}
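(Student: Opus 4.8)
The plan is to combine an algebraic reformulation of the discrete dual energy with a compactness argument in the Hilbert space $L^2(\O)$. First I would note that the definition of $\ouh$ gives $\a\ouh = \diver p_h + \a g_h$ and that a maximizer $p_h$ of $D_{\rm rof}^h$ necessarily belongs to $K_1(0)$, so that
\[
D_{\rm rof}^h(p_h) = \frac{\a}{2}\|g_h\|^2 - \frac{\a}{2}\|\ouh\|^2.
\]
Testing the maximality of $p_h$ against $0 \in Y_h$, for which $D_{\rm rof}^h(0)=0$, yields $\|\ouh\| \le \|g_h\|$; hence $(\ouh)_{h>0}$ is bounded in $L^2(\O)$ and every subsequence has a further subsequence with $\ouh \wto w$ for some $w \in L^2(\O)$. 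The analogous continuous identity together with $\diver p = \a(u-g)$ and strong duality gives $E_{\rm rof}(u) = D_{\rm rof}(p) = \frac{\a}{2}\|g\|^2 - \frac{\a}{2}\|u\|^2$. Because $\|\ouh - u\|^2 = \|\ouh\|^2 - 2(\ouh,u) + \|u\|^2$, it therefore suffices to prove the norm convergence $\|\ouh\| \to \|u\|$ together with the weak convergence $\ouh \wto u$.

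For the norm convergence I would show $D_{\rm rof}^h(p_h) \to E_{\rm rof}(u)$. A short computation shows that every feasible $q_h \in Y_h \cap K_1(0)$ satisfies $D_{\rm rof}^h(q_h) = D_{\rm rof}(q_h) - (\diver q_h, g_h - g)$. Applied to $p_h$, using $D_{\rm rof}(p_h) \le \sup_q D_{\rm rof}(q) = E_{\rm rof}(u)$ (feasibility of $p_h$ for the continuous dual problem) and the boundedness of $\diver p_h = \a(\ouh - g_h)$ together with $g_h \to g$, this gives $\limsup_{h\to 0} D_{\rm rof}^h(p_h) \le E_{\rm rof}(u)$. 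For the matching lower bound I need a recovery sequence $q_h \in Y_h^C \cap K_1(0) \subset Y_h \cap K_1(0)$ with $q_h \to p$ in $H(\diver;\O)$: then $D_{\rm rof}^h(q_h) \to D_{\rm rof}(p) = E_{\rm rof}(u)$, and maximality of $p_h$ gives $\liminf_{h\to 0} D_{\rm rof}^h(p_h) \ge E_{\rm rof}(u)$. Together these yield $D_{\rm rof}^h(p_h) \to E_{\rm rof}(u)$, whence $\|\ouh\|^2 = \|g_h\|^2 - (2/\a)D_{\rm rof}^h(p_h) \to \|u\|^2$.

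Constructing the recovery sequence is the main obstacle, since the pointwise constraint $|q_h|\le 1$ must survive the discretization. I would first approximate $p$ by a smooth field $p_\veps$ with $|p_\veps|\le 1$ and $p_\veps \to p$ in $H(\diver;\O)$; mollification is admissible because $|p * \varrho_\veps| \le |p| * \varrho_\veps \le 1$, but preserving $p_\veps \cdot n = 0$ on $\G_\NN$ and the $H(\diver)$-convergence on the polygonal domain requires a boundary-layer correction, which is the delicate point. Setting $q_h = \cI_h p_\veps$, the crucial observation is that $x \mapsto |q_h(x)|$ is convex and $q_h$ is elementwise affine, so its maximum on each element is attained at a vertex; the nodal identities $|q_h(z)| = |p_\veps(z)| \le 1$ then propagate to $|q_h|\le 1$ on all of $\O$, while standard interpolation estimates give $q_h \to p_\veps$ in $H(\diver;\O)$. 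A diagonal argument in $\veps$ completes the construction, and since $Y_h^C \subset Y_h$ the same sequence serves for either choice of discrete dual space.

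Finally I would identify the weak limit. Maximality of $p_h$ yields the discrete variational inequality $(\ouh, \diver(q_h - p_h)) \ge 0$ for all $q_h \in Y_h \cap K_1(0)$, that is $\a\|\ouh\|^2 - \a(\ouh, g_h) \le (\ouh, \diver q_h)$. Testing with the recovery sequence $q_h \to p$ and passing to the limit, using the norm convergence $\|\ouh\|^2 \to \|u\|^2$ on the left and the weak-strong pairing on the right, gives
\[
\a\|u\|^2 - \a(w,g) \le (w, \diver p) = \a(w,u) - \a(w,g),
\]
hence $(w,u) \ge \|u\|^2$. On the other hand weak lower semicontinuity together with $\|\ouh\| \to \|u\|$ forces $\|w\| \le \|u\|$, so $(w,u) \le \|w\|\,\|u\| \le \|u\|^2$ and therefore $(w,u) = \|u\|^2$. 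Consequently $\|\ouh - u\|^2 = \|\ouh\|^2 - 2(\ouh,u) + \|u\|^2 \to \|u\|^2 - 2\|u\|^2 + \|u\|^2 = 0$ along the subsequence; as the extracted subsequence was arbitrary and the norm convergence holds for the full sequence, the whole sequence $\ouh$ converges to $u$ in $L^2(\O)$.
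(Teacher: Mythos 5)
Your proof is correct, but it takes a genuinely different route from the paper's. The paper extracts a weakly convergent subsequence $p_{h'} \wto p$ of the discrete dual solutions in $H_\NN(\diver;\O)$, shows via weak lower semicontinuity of $-D_{\rm rof}$ plus a recovery sequence that this weak limit is a maximizer of $D_{\rm rof}$, upgrades to strong convergence $\diver p_{h'} \to \diver p$ in $L^2(\O)$ by combining convergence of the dual energies (hence of $\|\diver p_{h'}\|$) with weak convergence, and concludes from the duality relation $u = (1/\a)\diver p + g$. You instead never invoke compactness in $H(\diver;\O)$: you only extract weak $L^2$-limits of $\ouh$, prove the dual energy convergence $D_{\rm rof}^h(p_h) \to E_{\rm rof}(u)$ --- where your limsup bound exploits conformity, namely $D_{\rm rof}(p_h) \le \sup_q D_{\rm rof}(q) = E_{\rm rof}(u)$ since $p_h$ is feasible for the continuous dual problem, in place of the paper's lower-semicontinuity argument --- and then identify the weak limit through the discrete variational inequality $(\ouh, \diver(q_h - p_h)) \ge 0$ together with the equality case of the Cauchy--Schwarz inequality. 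Your route buys economy (weak compactness only in $L^2$, and a first-order optimality argument instead of a $\Gamma$-convergence-type identification), while the paper's route buys extra information as a byproduct (weak limits of the $p_{h'}$ are themselves dual maximizers, and their divergences converge strongly). Both arguments rest on the same two pillars: strong duality with $\diver p = \a(u-g)$, and the recovery sequence $q_h \in Y_h^C$ with $|q_h|\le 1$ and $q_h \to p$ in $H(\diver;\O)$ obtained by mollification plus nodal interpolation, neither of which increases the $L^\infty$-norm; you actually scrutinize the delicate point of that construction (preserving the normal boundary condition under mollification) more explicitly than the paper does, so this shared ingredient is not a gap of your argument relative to the published proof.
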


\begin{proof}
The sequence $(g_h)_h \subset L^2(\O)$ is uniformly
bounded since $g_h \to g$ in~$L^2(\O)$.
Using that~$p_h$ is a minimizer for~$-D_{\rm rof}^h$ in~$Y_h$ we can bound
\[
\frac{1}{2\a}\|\diver p_h + \a g_h\|^2 - \frac{\a}{2}\|g_h\|^2=-D_{\rm rof}^h(p_h)
\leq -D_{\rm rof}^h(0) = 0,
\]
i.e.,
\[
\frac{1}{2\a}\|\diver p_h + \a g_h\|^2 \leq \frac{\a}{2}\|g_h\|^2.
\]
Thus, the sequence $(p_h)_{h>0}$ is uniformly bounded in $H_\NN(\diver;\O)$.
Hence, we can choose a subsequence~$(p_{h'})_{h'>0}$
with $p_{h'} \wto p$ for a function $p \in H_\NN(\diver;\O)$. On the other hand there exists
for any $q \in H_\NN(\diver;\O)$ a sequence $(q_h)_{h>0} \subset Y_h^C$ with
$|q_h|\leq 1$ for all $h>0$ and $q_h \to q$ in $H_\NN(\diver;\O)$.
Indeed, for given $q \in H_\NN(\diver;\O)$ one can construct a smooth function
$\widetilde{q} \in C_c^\infty(\O;\R^d)$ via
convolution of~$q$ with a nonnegative convolution kernel noting that this process does not increase
the $L^\infty$-norm. One then procedes as in the proof of Lemma~\ref{lem:density} noting again that
neither the nodal interpolation operator increases the $L^\infty$-norm. The weak
lower-semicontinuity of~$-D_{\rm rof}$ and the optimality of each~$p_{h'}$ yield
\[\begin{split}
-D_{\rm rof}(p) &\leq \liminf_{h'\to0} - D_{\rm rof}(p_{h'}) \\
 &\leq \limsup_{h'\to0} - D_{\rm rof}^{h'}(p_{h'}) +  D_{\rm rof}^{h'}(p_{h'}) - D_{\rm rof}(p_{h'})\\
&\leq \limsup_{h'\to0} - D_{\rm rof}^{h'}(p_{h'}) +  c\|g-g_{h'}\|\\
&\leq \limsup_{h' \to 0} -D_{\rm rof}^{h'}(q_{h'}) \\
&= \limsup_{h' \to 0} -D_{\rm rof}(q_{h'}) + D_{\rm rof}(q_{h'}) - D_{\rm rof}^{h'}(q_{h'})\\
&\leq \limsup_{h' \to 0} -D_{\rm rof}(q_{h'}) + c\|g-g_{h'}\| = -D_{\rm rof}(q).
\end{split}\]
Hence,~$p$ is a minimizer of~$-D_{\rm rof}$. By choosing a sequence $(q_h)_{h>0} \subset Y_h^C$
such that $q_h \to p$ in $H_\NN(\diver;\O)$ we find that
\[
-D_{\rm rof}(p) = \lim_{h' \to 0} -D_{\rm rof}(p_{h'}),
\]
and, in particular, since $g_{h'} \to g$,
\[
\|\diver p_{h'}\| \to \|\diver p\|.
\]
This implies that $\diver p_{h'} \to \diver p$ since $\diver p_{h'} \wto \diver p$.
By strong duality of the primal and dual ROF problem we have
\[
u=\frac{1}{\a}\diver p + g.
\]
With $\diver p_{h'} \to \diver p$ and $g_{h'} \to g$ it follows that
\[
u-\overline{u}_{h'} = \frac{1}{\a}\diver p + g - \frac{1}{\a}\diver p_{h'} - g_{h'} \to 0.
\]
Thus, every convergent subsequence of $(\overline{u}_h)_{h>0}$ converges
to~$u$. Therefore, the whole sequence converges to~$u$.
\end{proof}

Using the strong convexity of the functional~$E_{\rm rof}$, i.e., there holds
\begin{equation}\label{eq:coercivity ROF}
\frac{\a}{2}\|u-v_h\|^2 \leq E_{\rm rof}(v_h) - E_{\rm rof}(u)
\end{equation}
for any $v_h \in \cS^1(\cT_h)$, we can carry out the a posteriori error analysis.

\subsection{A posteriori estimate and error estimator}

By the strong convexity~\eqref{eq:coercivity ROF} and the strong duality of the primal and
dual ROF problem we can establish an a
posteriori error estimate and an error estimator in the fashion
of Proposition~\ref{prop:abstract a posteriori},
which can be used for adaptive mesh refinement. The following reliability result
is a special case of Proposition~\ref{prop:abstract a posteriori} for the ROF
problem, where also the data approximation error is taken into account.

\begin{proposition}\label{prop:a posteriori ROF}
Let~$u$ and~$u_h$ be the unique minimizers for~$E_{\rm rof}$ in $BV(\O)\cap L^2(\O)$
and~$E_{\rm rof}^h$ in~$X_h$, respectively, and let~$p_h$ be a maximizer
for~$D_{\rm rof}^h$ in~$Y_h$. Then we have for any $v_h \in X_h$ and
$q_h \in Y_h$ with $|q_h|\leq 1$ that
\[
\frac{\a}{2} \|u-u_h\|^2 \leq \eta_{\rm rof}^h(v_h,q_h)^2 + c\|g-g_h\|
\]
with $\eta_{\rm rof}^h(v_h,q_h)^2 = E_{\rm rof}^h(v_h) - D_{\rm rof}^h(q_h)$
and~$c$ depending on~$\|g\|$.
\end{proposition}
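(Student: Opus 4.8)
The plan is to mirror the proof of Proposition~\ref{prop:a posteriori pLaplace}, exploiting that the pointwise constraint $|q|\le 1$ defining feasibility for $D_{\rm rof}$ is automatically inherited by the discrete maximizer~$p_h$. First I would invoke the strong convexity~\eqref{eq:coercivity ROF} with $v_h=u_h$ to obtain $\frac{\a}{2}\|u-u_h\|^2\le E_{\rm rof}(u_h)-E_{\rm rof}(u)$, and then replace $E_{\rm rof}(u)$ by $D_{\rm rof}(p)$ using strong duality of the primal and dual ROF problem. Inserting the discrete functionals I would split
\[
E_{\rm rof}(u_h)-D_{\rm rof}(p) = \bigl(E_{\rm rof}^h(u_h)-D_{\rm rof}^h(p_h)\bigr) + \bigl(E_{\rm rof}(u_h)-E_{\rm rof}^h(u_h)\bigr) + \bigl(D_{\rm rof}^h(p_h)-D_{\rm rof}(p)\bigr),
\]
and bound the first bracket by $\eta_{\rm rof}^h(v_h,q_h)^2$ using that $u_h$ minimizes $E_{\rm rof}^h$ and $p_h$ maximizes $D_{\rm rof}^h$ over the admissible sets, so that $E_{\rm rof}^h(u_h)\le E_{\rm rof}^h(v_h)$ and $D_{\rm rof}^h(p_h)\ge D_{\rm rof}^h(q_h)$ (the latter requiring exactly the feasibility $|q_h|\le 1$, which makes $I_{K_1(0)}(q_h)=0$).

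The two remaining brackets are data-approximation errors. Since $u_h\in\cS^1(\cT_h)$ is continuous, $\int_\O|\DD u_h|=\int_\O|\nabla u_h|\dv{x}$, so $E_{\rm rof}(u_h)-E_{\rm rof}^h(u_h)=\frac{\a}{2}\bigl(\|u_h-g\|^2-\|u_h-g_h\|^2\bigr)$; factoring the difference of squares this equals $\frac{\a}{2}\int_\O(g_h-g)(2u_h-g-g_h)\dv{x}$, which I would bound by $c\|g-g_h\|$ once $\|u_h\|$ is controlled. The crucial observation for the dual bracket is that $p_h\in Y_h\subset H_\NN(\diver;\O)$ and $|p_h|\le 1$ (since $p_h$ maximizes $D_{\rm rof}^h$), so $p_h$ is itself feasible for the continuous problem $D_{\rm rof}$; optimality of~$p$ then gives $D_{\rm rof}(p)\ge D_{\rm rof}(p_h)$ and hence $D_{\rm rof}^h(p_h)-D_{\rm rof}(p)\le D_{\rm rof}^h(p_h)-D_{\rm rof}(p_h)$. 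Expanding the squared terms and again factoring differences of squares, the $\a(g+g_h)$ contributions cancel and I expect the clean identity $D_{\rm rof}^h(p_h)-D_{\rm rof}(p_h)=\int_\O(g-g_h)\diver p_h\dv{x}\le\|g-g_h\|\,\|\diver p_h\|$.

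It remains to supply the a priori norm bounds, which I would derive from the trivial test functions $q=0$ and $v_h=0$ exactly as in the proof of Proposition~\ref{prop:piecewise constant approximation}: from $E_{\rm rof}^h(u_h)\le E_{\rm rof}^h(0)=\frac{\a}{2}\|g_h\|^2$ one gets $\|u_h-g_h\|\le\|g_h\|\le\|g\|$ and therefore $\|u_h\|\le c\|g\|$, while from $D_{\rm rof}^h(p_h)\ge D_{\rm rof}^h(0)=0$ one gets $\|\diver p_h+\a g_h\|\le\a\|g_h\|$ and hence $\|\diver p_h\|\le 2\a\|g\|$. Substituting these back shows both data errors are bounded by $c\|g-g_h\|$ with $c$ depending only on $\|g\|$ (and $\a$), and combining everything yields the claimed estimate.

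I expect the main issue to be bookkeeping rather than conceptual: the argument is in fact \emph{simpler} than its nonlinear-Laplace counterpart, because there is no hard affine divergence constraint to repair, so, unlike in Proposition~\ref{prop:a posteriori pLaplace}, no auxiliary flux correction $p^{(h)}$ is needed and $p_h$ may be used directly as a test function in $D_{\rm rof}$. The only care required is to verify that all constants genuinely collapse to a dependence on $\|g\|$, which hinges on the two energy-boundedness estimates above and on the cancellation of the cross terms in the dual data error.
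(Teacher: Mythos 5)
Your proof is correct and takes essentially the same route as the paper's: strong convexity plus strong duality, feasibility of $p_h$ (which satisfies $|p_h|\le 1$ and lies in $H_\NN(\diver;\O)$) so that optimality of $p$ gives $D_{\rm rof}(p)\ge D_{\rm rof}(p_h)$, optimality of $u_h$ and $p_h$ to pass to arbitrary $(v_h,q_h)$ in the estimator, and the a priori bounds $\|u_h\|\le c\|g\|$, $\|\diver p_h\|\le c\|g\|$ from the zero test functions. The only cosmetic difference is your observation that the cross terms in the dual data error cancel exactly, giving the clean identity $D_{\rm rof}^h(p_h)-D_{\rm rof}(p_h)=\int_\O(g-g_h)\diver p_h\dv{x}$, whereas the paper leaves this term in expanded form before bounding it; both yield the same bound $c\|g-g_h\|$.
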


\begin{proof}
Let $p \in H_\NN(\diver;\O)$ be a maximizer of~$D_{\rm rof}$.
Taking $v=u_h$ in~\eqref{eq:coercivity ROF} and
using the strong duality $E_{\rm rof}(u) = D_{\rm rof}(p)$, the optimality of~$p$
in~$H_\NN(\diver;\O)$, the optimality of~$p_h$ in $Y_h \subset H_\NN(\diver;\O)$
and the optimality of~$u_h$ in~$X_h$ we have
\[\begin{split}
\frac{\a}{2} \|u-u_h\|^2 \leq & \; E_{\rm rof}(u_h) - E_{\rm rof}(u) \\
= & \; E_{\rm rof}(u_h) - D_{\rm rof}(p) \\
\leq & E_{\rm rof}(u_h) - D_{\rm rof}(p_h) \\
= & \; \eta_{\rm rof}^h(u_h,p_h)^2\\
&+ E_{\rm rof}(u_h) - E_{\rm rof}^h(u_h) + D_{\rm rof}^h(p_h) - D_{\rm rof}(p_h)\\
\leq & \; \eta_{\rm rof}^h(v_h,q_h)^2\\
&+ E_{\rm rof}(u_h) - E_{\rm rof}^h(u_h) + D_{\rm rof}^h(p_h) - D_{\rm rof}(p_h).
\end{split}\]
The first data approximation error can be bounded by
\[
E_{\rm rof}(u_h) - E_{\rm rof}^h(u_h) = \frac{\a}{2}\int_\O (g_h-g)(2u_h-g-g_h) \dv{x} \leq c \|g-g_h\|,
\]
where we used that $\|u_h\| \leq c \|g_h\|$ and $\|g_h\| \leq c \|g\|$. The second data
approximation error can be analogously estimated by
\[\begin{split}
& \; D_{\rm rof}^h(p_h) - D_{\rm rof}(p_h) \\
= & \; \frac{1}{2}\Bigl[\int_\O (g_h-g)(g_h+g) \dv{x} +\int_\O (g-g_h)(2\diver p_h + \a(g+g_h)) \dv{x}\Bigr] \\
\leq & \; c \|g-g_h\|
\end{split}\]
using that $\|\diver p_h\| \leq c \|g_h\| \leq c \|g\|$, which completes the proof.
\end{proof}

\begin{remarks}\label{rem:estimator ROF}
1. Note that, as for the nonlinear Laplace problem, the (discrete) primal-dual gap error estimator~$\eta_{\rm rof}^h$
defines for arbitrary $v_h \in X_h$ and $q_h \in Y_h$ with $|q_h|\leq 1$
a reliable upper bound (up to data oscillations) for the error. Particularly,
the exact discrete solutions~$u_h$
and~$p_h$ of the primal and dual ROF problem, respectively, need not to be computed
exactly to estimate the error.\\
2. Using binomial formulas
and integration by parts we obtain the representation
\[
\eta_{\rm rof}^h(v_h,q_h)^2 =\int_\O |\nabla v_h| -\nabla v_h \cdot q_h \dv{x}
+ \frac{1}{2 \a}\|\diver q_h - \a(v_h-g_h)\|^2
\]
for $v_h \in X_h$ and $q_h \in Y_h$ with $|q_h|\leq 1$.
\end{remarks}

As for the nonlinear Laplace problem, for $T \in \cT_h$ the local error indicators are defined via
restricting the global error estimator to the simplex~$T$. The local error indicators
are non-negative due to the condition $|q_h|\leq 1$ as the next proposition shows.

\begin{proposition}
Let for any $T \in \cT_h$ the local error indicator be defined by
\[
\eta_{\rm rof}^{h,T}(v_h,q_h)^2 = \int_T |\nabla v_h| -\nabla v_h \cdot q_h \dv{x}
+ \frac{1}{2 \a}\|\diver q_h - \a(v_h-g)\|_{L^2(T)}^2.
\]
Then we have for any $v_h \in X_h$ and $q_h \in Y_h$ with $|q_h|\leq 1$ that
\[
\eta_{\rm rof}^{h,T}(v_h,q_h) \geq 0.
\]
\end{proposition}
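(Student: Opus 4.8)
The plan is to show that the two summands defining $\eta_{\rm rof}^{h,T}(v_h,q_h)^2$ are each nonnegative, so that their sum is nonnegative and the square root appearing in the definition of the indicator is a well-defined, nonnegative real number. This parallels the argument for the nonlinear Laplace problem in Proposition~\ref{prop:positivity estimator pLaplace}, except that the role played there by Young's inequality is here taken over by the pointwise constraint $|q_h|\le 1$.

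First I would dispose of the fidelity term. Since $\a>0$, the quantity $\frac{1}{2\a}\|\diver q_h - \a(v_h-g)\|_{L^2(T)}^2$ is a positive multiple of a squared $L^2(T)$-norm and is therefore manifestly nonnegative, requiring no assumption on $v_h$ or $q_h$. The key step is the nonnegativity of the total-variation term, which I would establish pointwise on $T$: by the Cauchy-Schwarz inequality in $\R^d$ together with the hypothesis $|q_h|\le 1$ we have $\nabla v_h \cdot q_h \le |\nabla v_h|\,|q_h| \le |\nabla v_h|$ almost everywhere, so the integrand $|\nabla v_h| - \nabla v_h\cdot q_h$ is nonnegative pointwise and hence $\int_T |\nabla v_h| - \nabla v_h\cdot q_h \dv{x}\ge 0$. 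Adding the two nonnegative contributions yields $\eta_{\rm rof}^{h,T}(v_h,q_h)^2\ge 0$, and taking the nonnegative square root gives $\eta_{\rm rof}^{h,T}(v_h,q_h)\ge 0$, as claimed.

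There is no genuine obstacle here; the argument is elementary once one recognizes that the constraint $|q_h|\le 1$ is precisely what makes the linear term $\nabla v_h\cdot q_h$ dominated by the convex integrand $|\nabla v_h|$. It is worth emphasizing that this constraint is essential: it is exactly the pointwise bound enforced through the nodal degrees of freedom of $Y_h^C$ and $Y_h^{dC}$ that guarantees nonnegativity, and thus the well-posedness of the local indicator as a square root, which is unavailable for elements lacking such degrees of freedom as noted in the preceding remark.
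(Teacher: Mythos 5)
Your proof is correct and follows the same route as the paper, which simply notes that nonnegativity follows from $|q_h|\le 1$ and the Cauchy-Schwarz inequality; you have merely spelled out the pointwise estimate $\nabla v_h\cdot q_h\le|\nabla v_h|\,|q_h|\le|\nabla v_h|$ and the trivial nonnegativity of the $L^2$-term. No discrepancy to report.
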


\begin{proof}
The non-negativity immediately follows from $|q_h|\leq 1$ and the Cauchy-Schwarz inequality.
\end{proof}

To obtain a computable a posteriori error estimator we iteratively solve the
primal and dual ROF problem.

\subsection{Iterative solution}

We approximate discrete minimizers ~$u_h$ and ~$p_h$ of ~$E_{\rm rof}^h$ and
~$-D_{\rm rof}^h$ as in the case of the nonlinear Laplacian via an augmented Lagrangian approach.
To this end, we introduce for the primal problem
\[\begin{split}
L_\tau^E (u_h,r_h;\l_h) = & \; \int_\O |r_h| \dv{x} + \frac{\a}{2}\|u_h-g_h\|^2 \\
 &+ (\l_h, \nabla u_h - r_h)_w + \frac{\tau}{2}\|\nabla u_h - r_h\|_w^2
\end{split}\]
for $u_h \in X_h$ and $r_h,\l_h \in \cL^0(\cT_h)^d$, and, for the dual problem,
\[\begin{split}
L_\tau^D (p_h,q_h;\mu_h) = & \; \frac{1}{2\a} \|\diver p_h + \a g_h \|^2 - \frac{\a}{2} \|g_h\|^2 + I_{K_1(0)}(q_h) \\
 &+ (\mu_h, p_h - q_h)_h + \frac{\tau}{2}\|p_h - q_h\|_h^2
\end{split}\]
for $q_h,\mu_h \in \cL^1(\cT_h)^d$ and $p_h \in Y_h$. The corresponding saddle-point
problems are again solved using the Variable-ADMM presented in~\cite{BarMil17}.

%
%

\section{Numerical experiments}\label{sec:experiments}

In this section we present our numerical results for the approximation of solutions
for the nonlinear Laplace equation and the ROF problem using mesh adaptivity which is
based on the primal-dual gap estimators ~$\eta(u_h,p_h)$. The refinement of a given
triangulation ~$\cT_h$ relies on the D\"orfler marking and consists in the bisection
of elements $T \in \mathcal{M}_h$ of a minimal set $\mathcal{M}_h \subset \cT_h$ for which
\[
\Bigl[\sum_{T \in \mathcal{M}_h} \eta^T(u_h,p_h)^2\Bigr]^{1/2} \geq 1/2 \Bigl[\sum_{T \in \cT_h} \eta^T(u_h,p_h)\Bigr]^{1/2}
\]
holds. Additional elements then are refined to avoid hanging nodes.
The numerical approximations ~$u_h$ and ~$p_h$ for the primal and dual problem,
respectively, are obtained using the corresponding saddle-point formulations and the
Variable-ADMM presented in ~\cite{BarMil17}.\\
Before we report the performance of the adaptive algorithm for the
nonlinear Laplace equation and the ROF problem in this section, we will first briefly
comment on the hybrid realization of the Brezzi-Douglas-Marini finite element space.

\subsection{Hybrid implementation of $\mathcal{BDM}(\O)$}

We first of all define the space
\[
Z_h = \big\{r_h \in L^\infty(\cup \cS_h) : r_h|_S \mbox{ affine for all } S\in \cS_h\big\},
\]
i.e., ~$Z_h$ contains all functions~$r_h$ that are piecewise affine, discontinuous functions
on the skeleton~$\cS_h$ of the triangulation~$\cT_h$.
The space~$\mathcal{BDM}(\O)$ consists of all elementwise affine vector fields~$q_h$ for which the normal
component is continuous across interelement sides $S\in \cS_h$, i.e.,  
\[
[\![q_h\cdot n_S]\!]|_S(x) = \lim_{\veps \to 0} \big(q_h(x+\veps n_S) - q_h(x-\veps n_S) \big)\cdot n_S = 0
\]
for all $x\in S$ with a unit normal $n_S$ on $S$. If~$\mathcal{BDM}(\O)$ is defined to be a subspace
of~$H_\NN(\diver;\O)$, the normal component on~$\G_\NN$ vanishes, i.e., 
\[
[\![q_h\cdot n_S]\!]|_S(x) = q_h(x) \cdot n_S = 0
\]
for all boundary sides $S\in \cS_h \cap \G_\NN$ and $x\in S$. This means that
$q_h \in \mathcal{BDM}(\O)$, if and only if $q_h \in \cL^1(\cT_h)^d$ and
\[
\int_{ \cup (\cS_h\setminus (\cS_h \cap \G_\DD))}  [\![q_h\cdot n_S]\!] r_h \dv{s} = 0 
\]
for all $r_h \in Z_h$.

\subsection{Nonlinear Laplace equation}

We consider the nonlinear Laplace problem with inhomogeneous Dirichlet data
on the L-shaped domain and let $\O=(-1,1)^2\setminus ([0,1]\times [-1,0])$,
$\G_\DD = \p\O$ and $g=0$, and define the Dirichlet data ~$u_\DD=u|_{\p\O}$
through restriction of the exact solution given in polar coordinates by
\[
u(r,\theta) = r^\d\sin(\d \theta)
\]
to the boundary. The choice of~$\d$ will be specified later in dependence of the choice of~$\s$.
The nonsmooth source term~$f$ is then given in polar coordinates by
\[
f(r,\theta) = -(2-\s)\d^{\s-1}(1-\d)r^{(\d-1)(\s-1)-1}\sin(\d\theta).
\]
We let $\d=(6/5)(1-1/\s)$. Then we have that $u \in W^{1,\s}(\O)$ but $u \notin W^{2,\s}(\O)$.
In what follows~$u_h \in X_h$ and~$p_h \in Y_h$ denote approximate solutions
to the primal and dual nonlinear Laplace problem obtained with the iterative scheme Variable-ADMM
(cf.~\cite{BarMil17}).

\begin{figure}[!htb]
\centering
\subfloat{\resizebox{6.6cm}{5cm}{\input 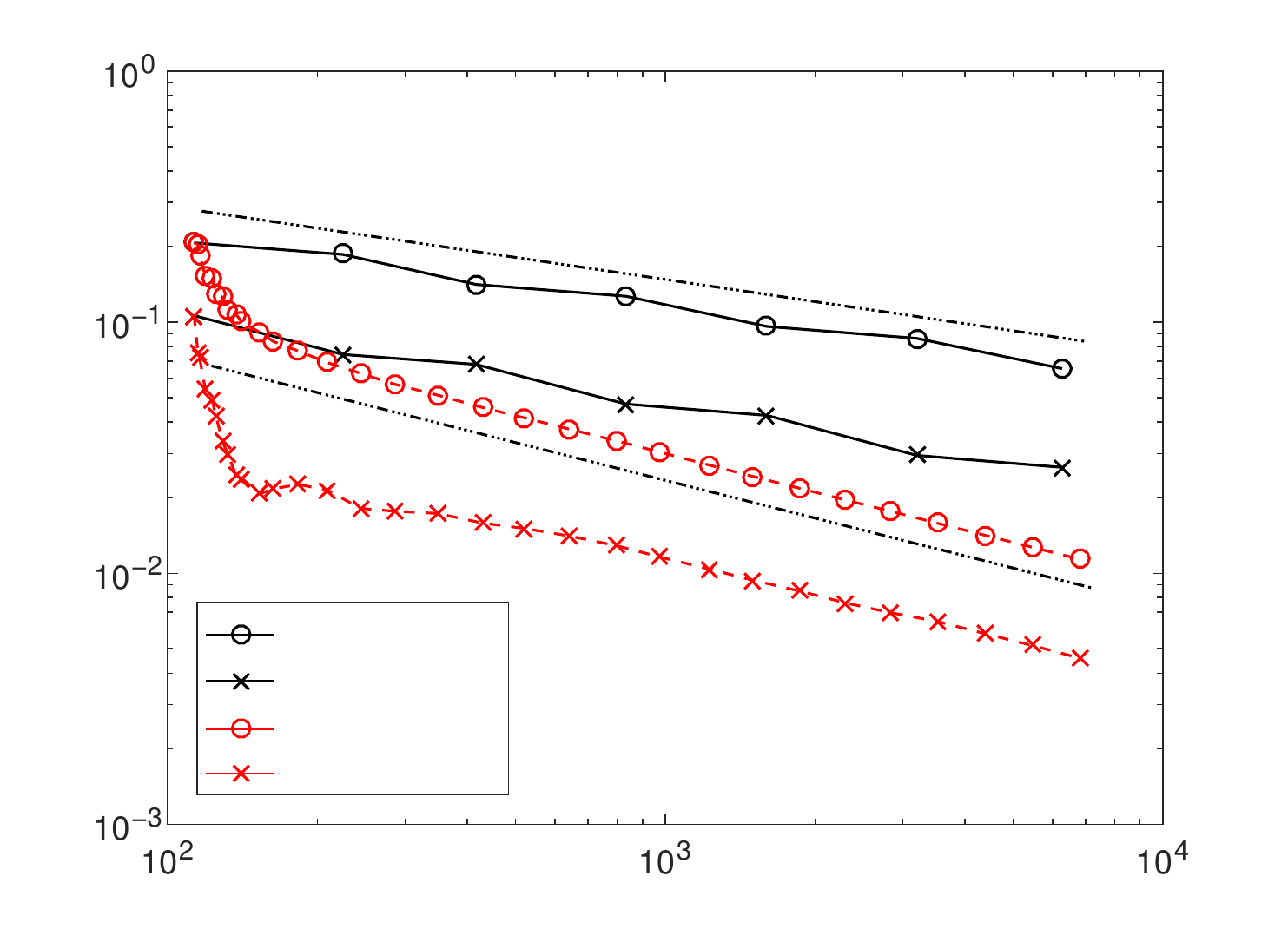_t}}
\subfloat{\resizebox{6.6cm}{5cm}{\input 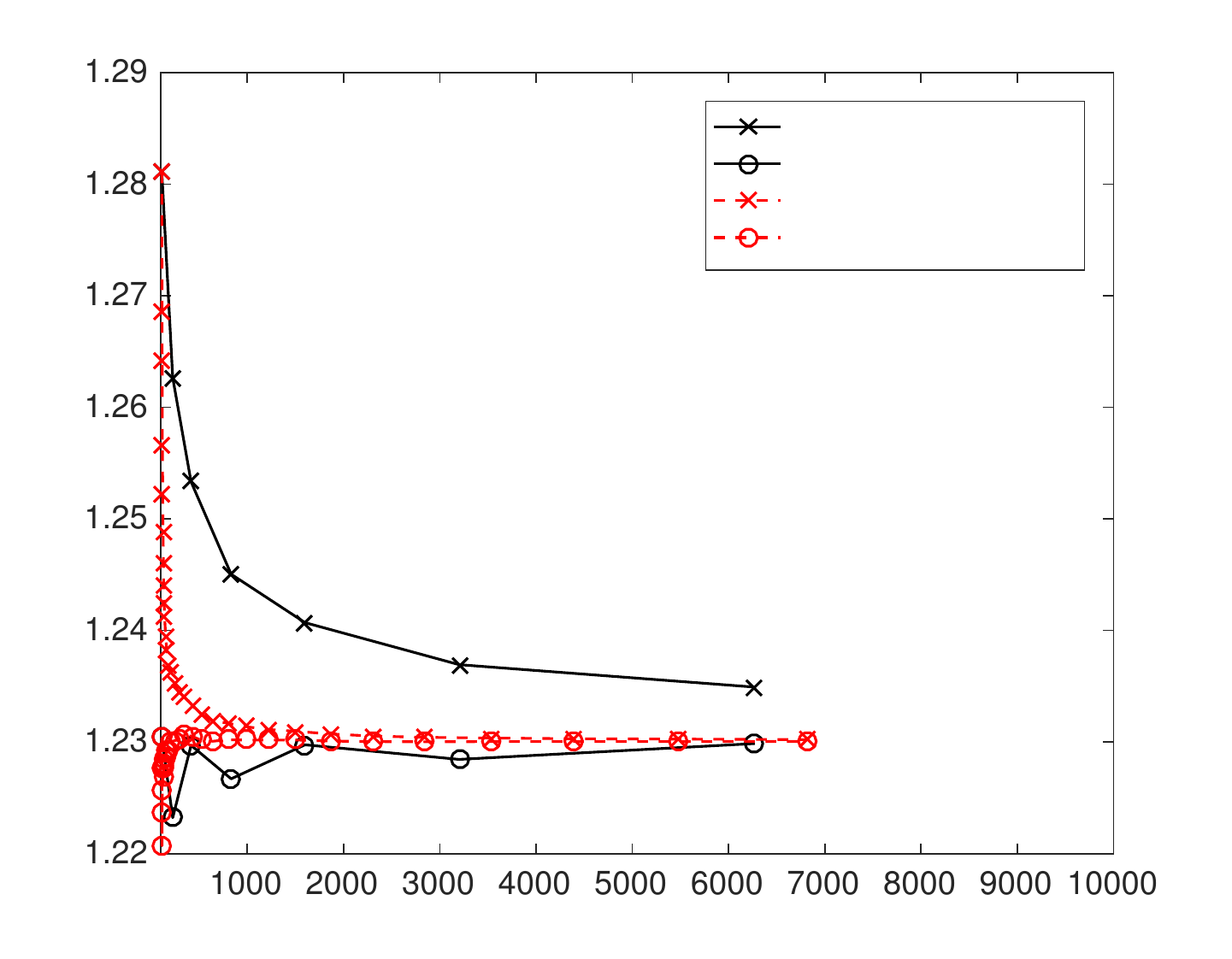_t}}\par
\subfloat{\resizebox{6.6cm}{5cm}{\input 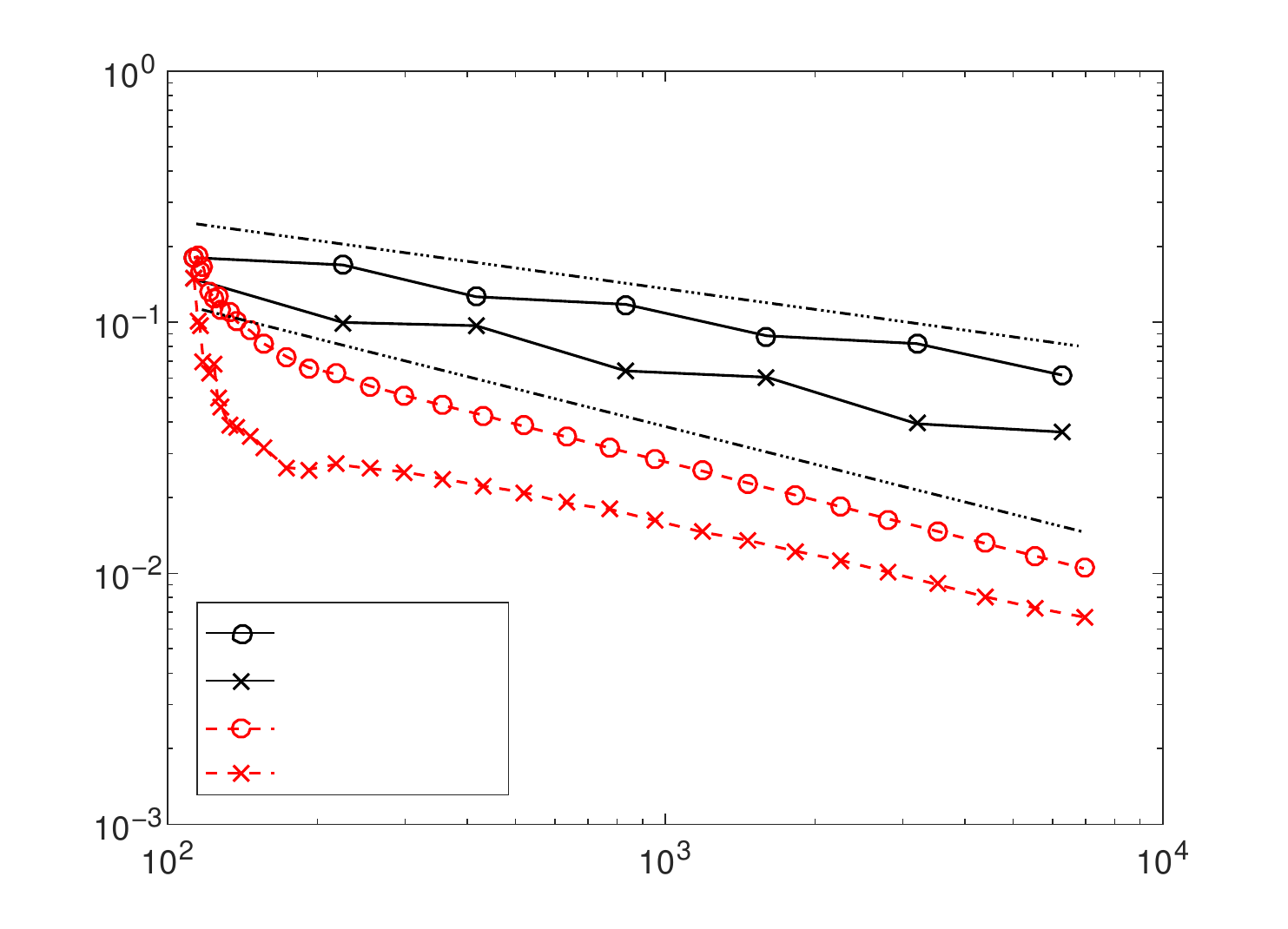_t}}
\subfloat{\resizebox{6.6cm}{5cm}{\input 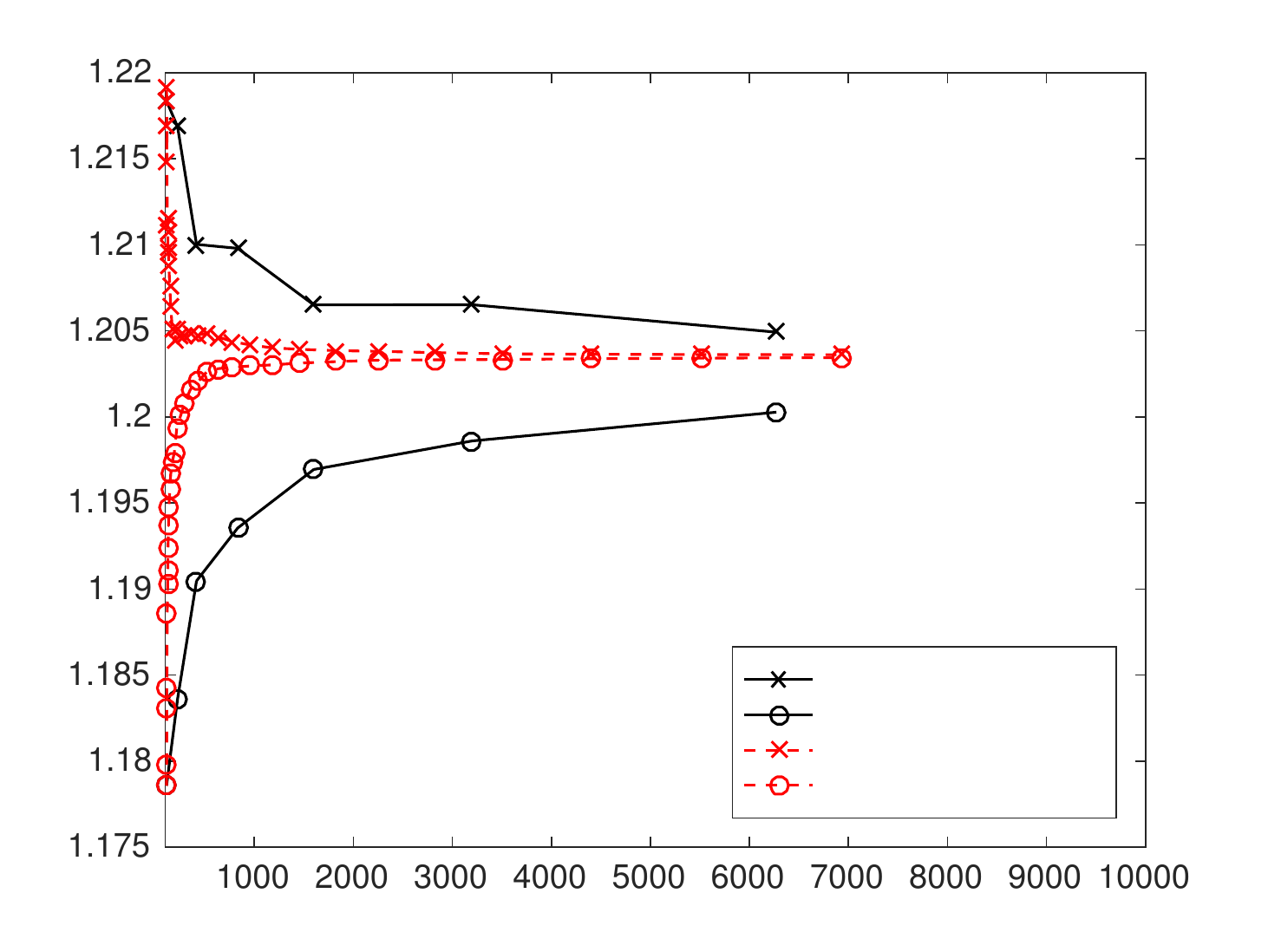_t}}
\caption{Primal-dual gap error estimators~$\heta_{\Delta_\s}^h$ and error $\vrho_{\Delta_\s}^{1/2}=\|V(\nabla u)-V(\nabla u_h)\|$ (left) and primal and dual energy~$E_{\Delta_\s}^h(u_h)$ and~$\hD_{\Delta_\s}^h(p_h)$ (right) for uniform and adaptive mesh refinement.
Top: Nonlinear Laplace problem with $\s=1.6$. Bottom: Nonlinear Laplace problem with $\s=1.2$.}\label{fig:error and energy pLaplace}
\end{figure}

In Figure~\ref{fig:error and energy pLaplace}
the error estimator~$\heta_{\Delta_\s}^h(u_h,p_h)$
and the error in the quasi-norm on the left-hand side of the estimate in
Proposition~\ref{prop:a posteriori pLaplace}
\[
\vrho_{\Delta_\s}^{1/2}=\|V(\nabla u)-V(\nabla u_h)\|
\]
are plotted against the number of degrees of freedom $N=|\cN_h|$ in a loglog-plot.
One can clearly observe that mesh adaptivity yields the quasi-optimal convergence rate
$\overline{h} \sim N^{-1/2}$. Particularly, the primal-dual gap error
estimator~$\heta_{\Delta_\s}^h(u_h,p_h)$ defines a reliable upper bound for the error in the quasi-norm.
On the right-hand side of Figure~\ref{fig:error and energy pLaplace} we displayed the energy
curves for the primal and dual energy $E_{\Delta_\s}^h(u_h)$ and~$\hD_{\Delta_\s}^h(p_h)$,
respectively. The primal and dual energy converge to the optimal value and the
primal-dual gap $E_{\Delta_\s}^h(u_h)-\hD_{\Delta_\s}^h(p_h)$ converges to zero as $N \to \infty$
and at a higher rate, when local mesh refinement is used. In Figure~\ref{fig:refined mesh} three
snapshots of the refined mesh are displayed, which show that the primal-dual gap error estimator
yields triangulations that are locally refined in the neighborhood of the singularity.
The high resolution is even more localized for $\s \to 1$, since the singularity at the reentrant
corner increases.

In Figure~\ref{fig:iteration numbers pLaplace} the iteration numbers for the Variable-ADMM
for the primal and dual problem are plotted versus the number of degrees of freedom for both
uniform and adaptive mesh refinement and for parameters $\s=1.6$ and $\s=1.2$.
The error tolerance for the residual in the Variable-ADMM was of order~$\mathcal{O}(h^2)$.
One can observe that the iteration numbers for the dual problem critically increase as~$\s$
is decreased.

\begin{figure}[!htb]
\captionsetup[subfigure]{labelformat=empty}
\centering
\subfloat{\includegraphics[width=3.5cm,height=3.5cm]{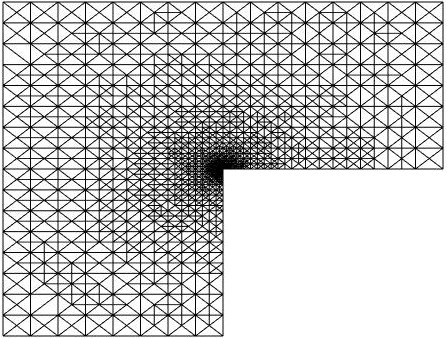}} \hspace*{5mm}
\subfloat{\includegraphics[width=3.5cm,height=3.5cm]{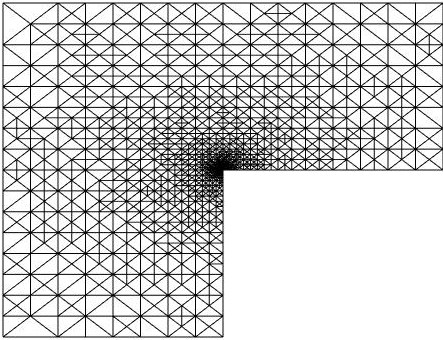}}\hspace*{5mm}
\subfloat{\includegraphics[width=3.5cm,height=3.5cm]{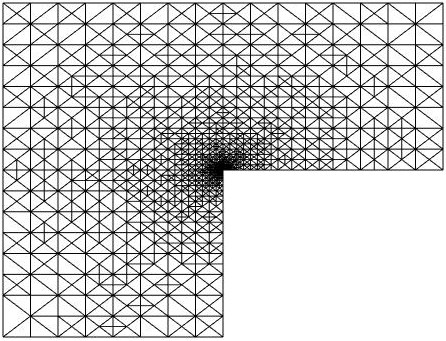}}
\caption{Snapshots of refined meshes for nonlinear Laplace problem with $\s=1.6$ (left), $\s=1.2$ (middle) and $\s=1.05$ (right). The mesh is locally refined in a neighborhood of the reentrant corner. The resolution at the reentrant corner increases as $\s \to 1$.}\label{fig:refined mesh}
\end{figure}

\begin{figure}[!htb]
\centering
\subfloat{\resizebox{6.6cm}{5cm}{\input 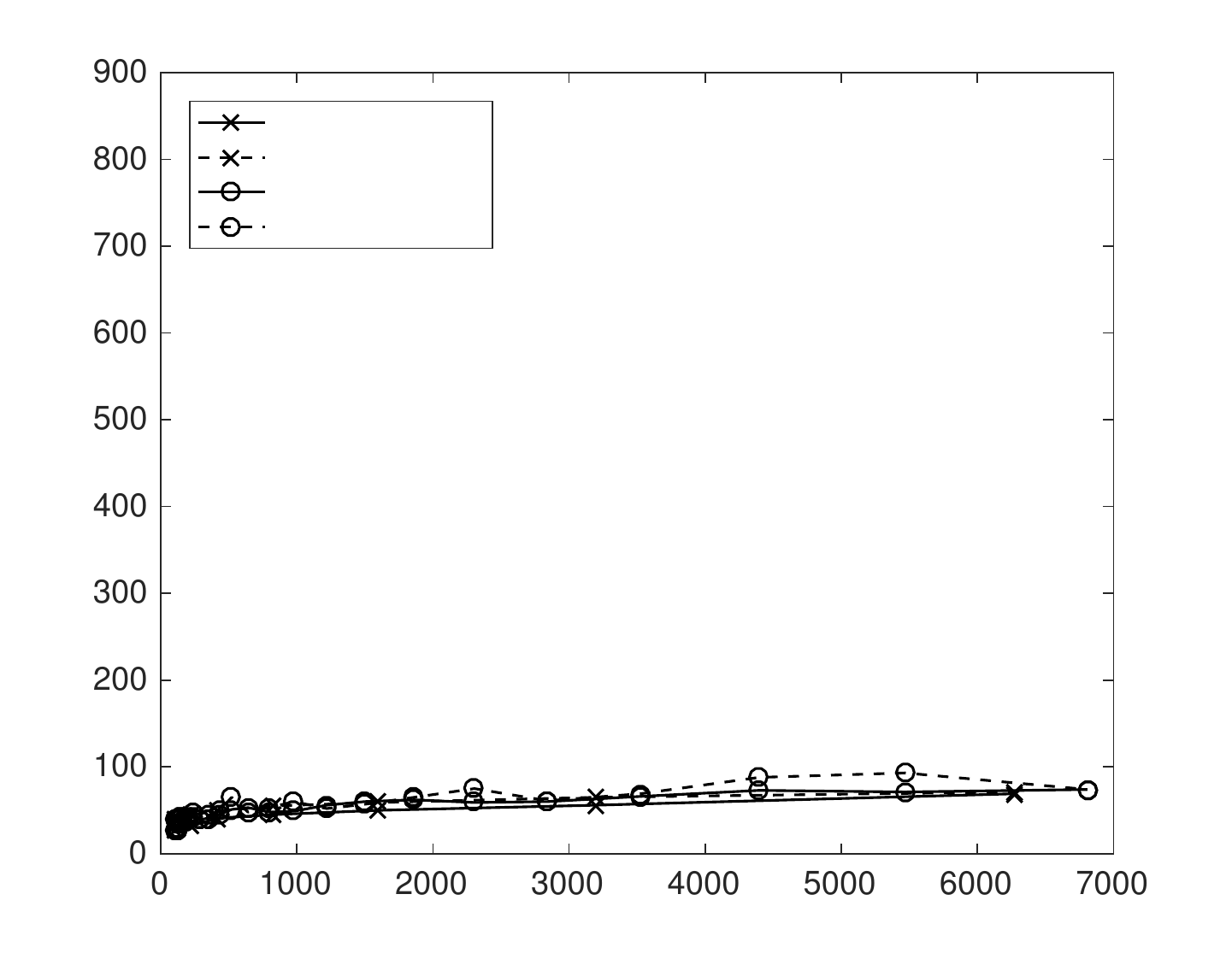_t}}
\subfloat{\resizebox{6.6cm}{5cm}{\input 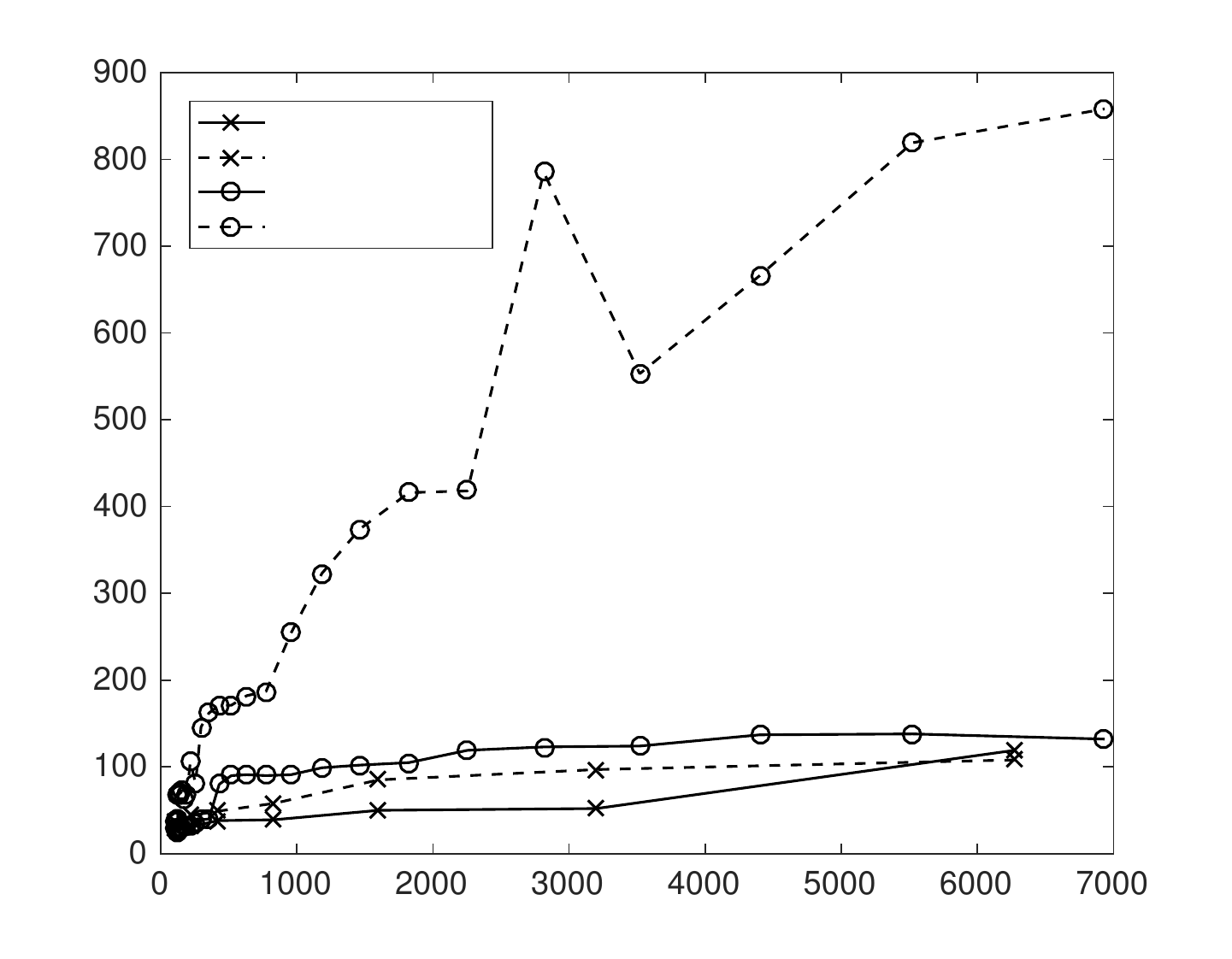_t}}
\caption{Iterations numbers for Variable-ADMM for the minimization of ~$E_{\Delta_\a}^h$
~$\hD_{\Delta_\a}^h$ for both uniform and adaptive refinement. Left: $\a=1.6$; right: $\a=1.2$.}\label{fig:iteration numbers pLaplace}
\end{figure}


Let us finally consider the residual-based error estimator
\[
\eta_{res}^h(u_h)^2 = \sum_{T \in \cT_h} \eta_{res}^{h,T}(u_h)^2
\]
from~\cite{LiuYan01,LiuYan01:2,LiuYan02,DieKre08,BelDieKre12} with
\[
\eta_{res}^{h,T}(u_h)^2 = \eta_E^{h,T}(u_h)^2 + \sum_{S \in \cS_h\setminus \p \O, S \subset \p T}\eta_J^{h,S}(u_h)^2
\]
and
\[\begin{split}
\eta_E^{h,T}(u_h)^2 &= \int_T (|\nabla u_h|^{\s-1} +h_T|f_h|)^{\s'-2}h_T^2|f_h|^2 \dv{x},\\
\eta_J^{h,S}(u_h)^2 &= \int_{\o_S} (|\nabla u_h| +|[\![\nabla u_h]\!]_S|)^{\s-2}|[\![\nabla u_h]\!]_S|^2 \dv{x},
\end{split}\]
where $\o_S = \bigcup\{T_1,T_2 \in \cT_h: \; S=T_1\cap T_2\}$ for
$S \in \cS_h \setminus \p \O$ and~$u_h$ is the unique discrete minimizer
of~$E_{\Delta_\s}^h$. The expression $[\![\nabla u_h]\!]_S$ denotes the jump
of~$\nabla u_h$ across an inner side $S \in \cS_h$ defined by
\[
[\![\nabla u_h]\!]_S = \nabla u_h|_{T_1} - \nabla u_h|_{T_2}
\]
for $S=T_1 \cap T_2$. The error estimator~$\eta_{res}^h(u_h)$ has been extensively studied in~\cite{LiuYan01,LiuYan01:2,LiuYan02,DieKre08,BelDieKre12}, where
the efficiency and reliability of the estimator has been proven and the linear
convergence as well as the optimality of the corresponding adaptive finite element
scheme have been shown.

\begin{figure}[!htb]
\centering
\subfloat{\resizebox{6.6cm}{5cm}{\input 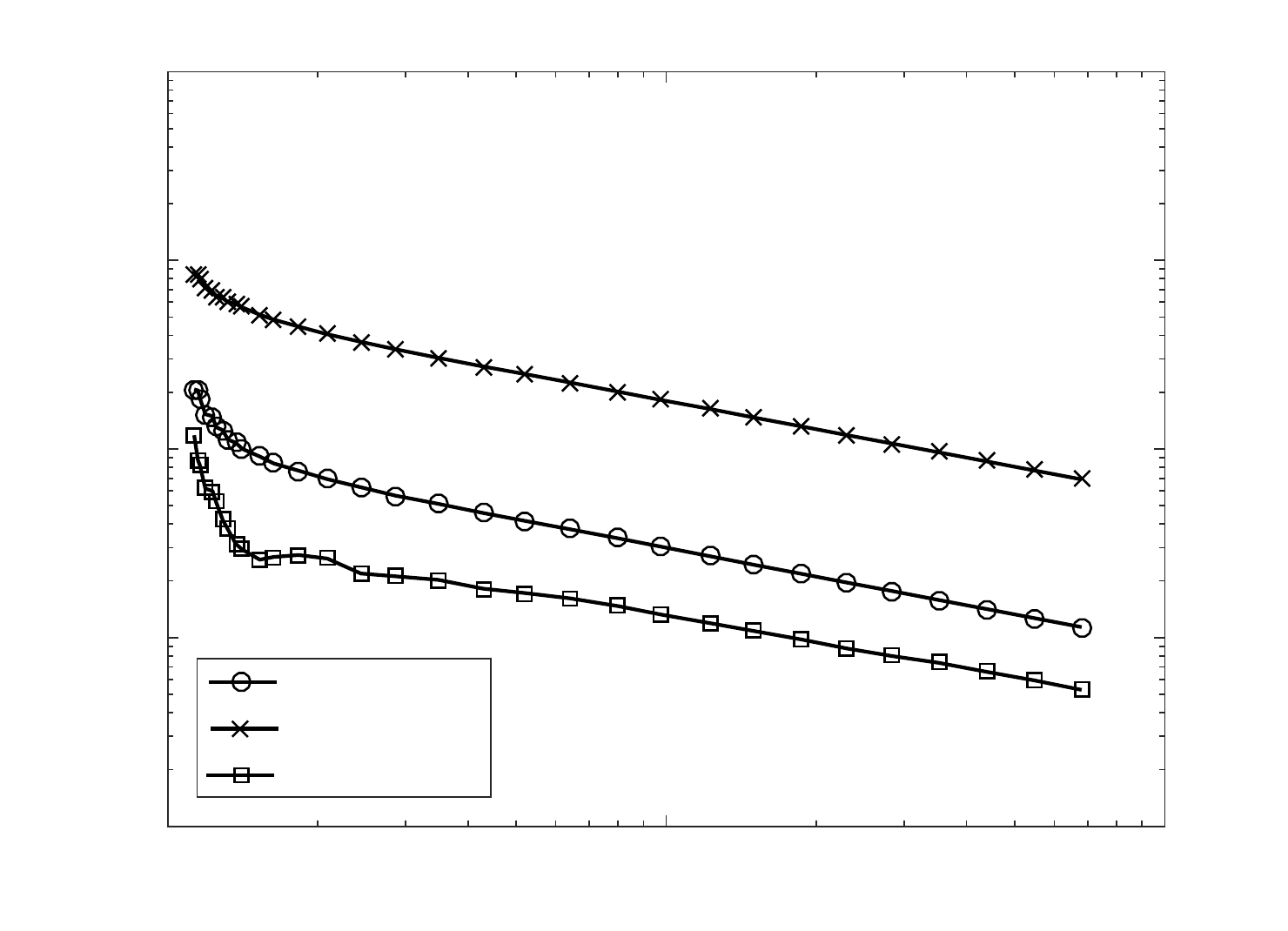_tex}}
\subfloat{\resizebox{6.6cm}{5cm}{\input 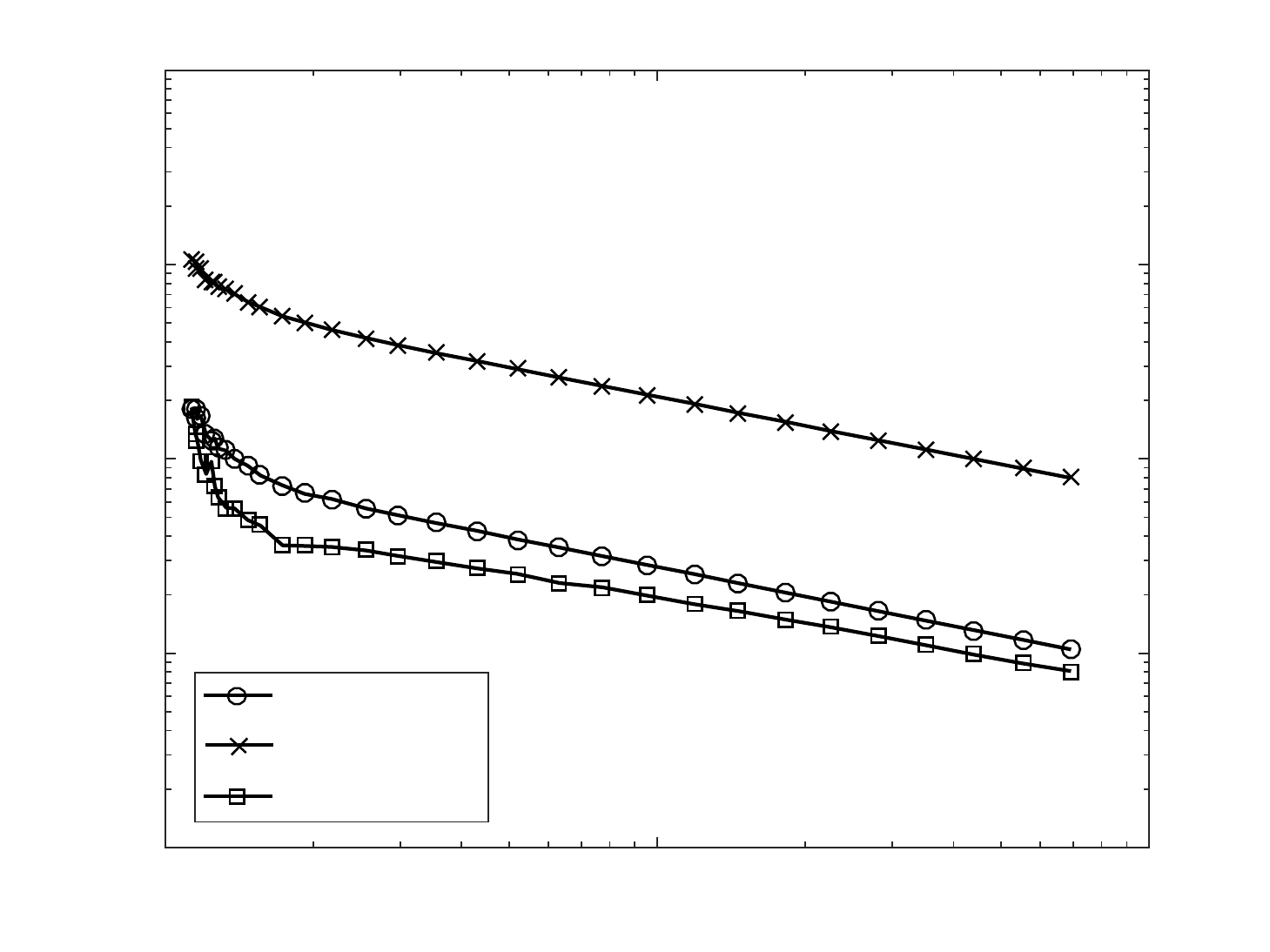_tex}}
\caption{Primal-dual gap estimator~$\heta_{\Delta_\s}^h$, residual-based
estimator~$\eta_{res}^h$ and error $\vrho_{\Delta_\s}^{1/2}=\|V(\nabla u)-V(\nabla u_h)\|$ for a sequence of adaptively refined meshes driven by~$\heta_{\Delta_\s}^h$. Left: Nonlinear Laplace problem with $\s=1.6$. Right: Nonlinear Laplace problem with $\s=1.2$.}\label{fig:comp estimators}
\end{figure}

In Figure~\ref{fig:comp estimators} we compare the primal-dual gap error
estimator~$\heta_{\Delta_\s}^h(u_h,p_h)$ with the residual error estimator~$\eta_{res}^h(u_h)$
for the nonlinear Laplace problem with inhomogeneous Dirichlet data on the
L-shaped domain for $\s=1.6$ and $\s=1.2$ as before. One can observe that
both estimators decay at the same rate~$\mathcal{O}(N^{-1/2})$
on a sequence of locally refined meshes driven by an element marking
strategy based on~$\heta_{\Delta_\s}^h(u_h,p_h)$. However, the overestimation of the
primal-dual gap error estimator~$\heta_{\Delta_\s}^h(u_h,p_h)$ is moderate compared to
the residual-based error estimator~$\eta_{res}^h(u_h)$. While the overestimation
of~$\eta_{res}^h(u_h)$ for $\s=1.6$ and $\s=1.2$ do not differ significantly, the gap between
the primal-dual gap error estimator and the error diminishes for $\s=1.2$. Let us also remark that
in the proofs of the reliability and the efficiency of the residual-based error
estimator~$\eta_{res}^h(u_h)$ it is crucial that~$u_h$ is the unique solution to the
primal nonlinear Laplace problem in~$X_h$, cf.~\cite{DieKre08}. Its robustness regarding
inexact iterative solutions is not addressed in the aforementioned articles.

\subsection{Rudin-Osher-Fatemi image denoising}

We let $\O=(-1,1)^2$ and consider two examples, the first one with homogeneous Neumann
boundary conditions and the second one with homogeneous Dirichlet boundary conditions,
for which we have an explicit solution at hand. In the case of Dirichlet boundary conditions
the dual energy functional~$D_{\rm rof}$ is maximized over~$H(\diver;\O)$ instead
of~$H_\NN(\diver;\O)$. The calculations remain valid, but in general it is nontrivial
to guarantee the existence of solutions for Dirichlet boundary conditions.

\begin{example}\label{ex:square}
We set $\G_\DD=\emptyset$, $\G_\NN=\p\O$, $\a=100$, and $g=\chi_{B_{1/2}^\infty(0)}$ the
characteristic function of $B_{1/2}^\infty(0)=\{(x_1,x_2) \in \R^2: \; \max\{|x_1|,|x_2|\}\leq 1/2\}$.
\end{example}

\begin{figure}[!htb]
\centering
\subfloat{\resizebox{6.6cm}{5cm}{\input 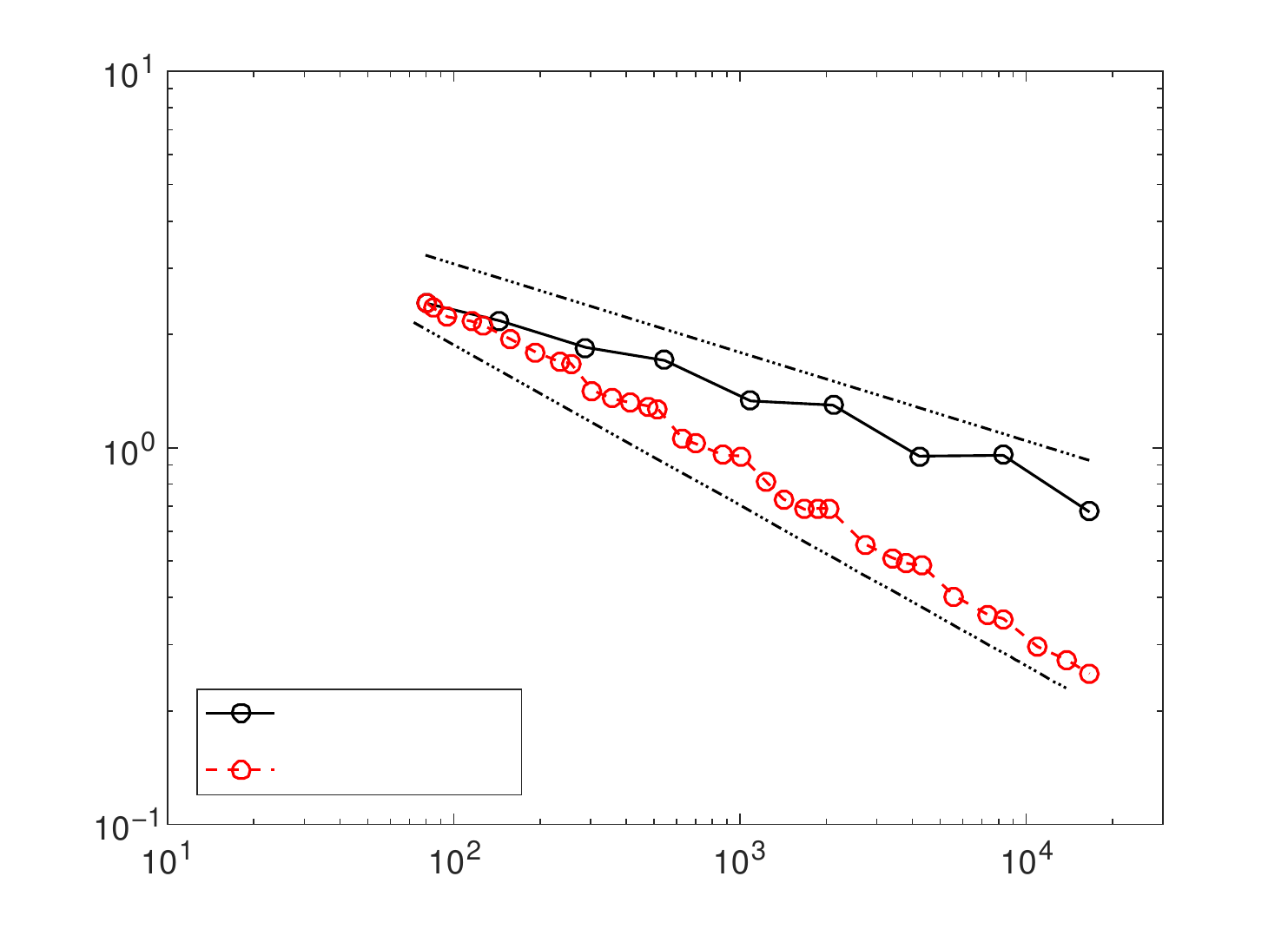_t}}
\subfloat{\resizebox{6.6cm}{5cm}{\input 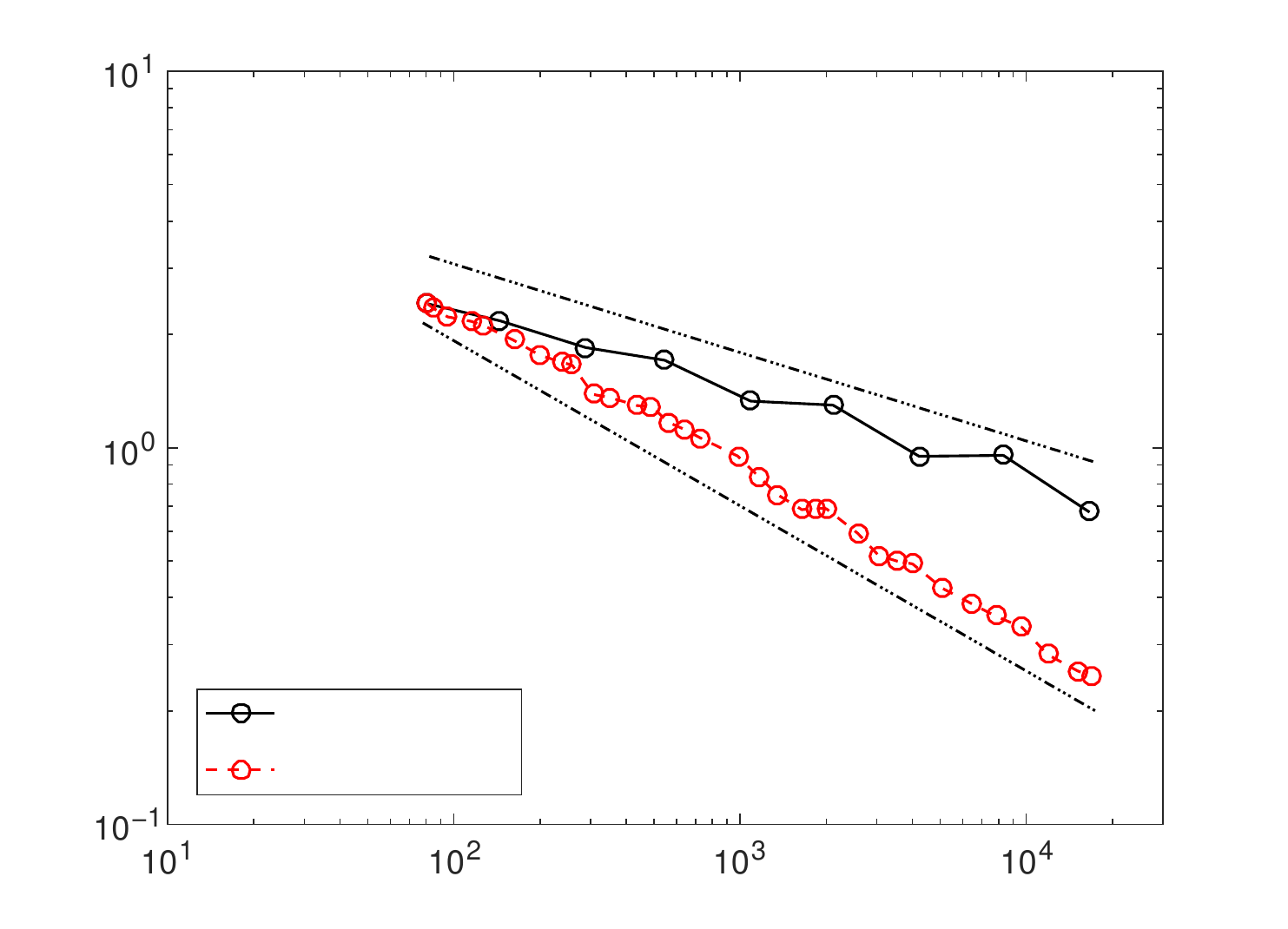_t}}
\caption{Error estimator~$\eta_{\rm rof}^h$ for Example~\ref{ex:square} with discretization of the
dual problem with continuous finite element space~$Y_h^C$ (left) and $H(\diver;\O)$-conforming
finite element space~$Y_h^{dC}$ (right) for uniform and adaptive mesh refinement.}\label{fig:error ROF square}
\end{figure}

In Figure~\ref{fig:error ROF square} the error estimator~$\eta_{\rm rof}^h$
is plotted against the number of degrees of freedom $N=|\cN_h|$ using a logarithmic scaling on both axes both
for uniform and adaptive mesh refinement and with the dual problem discretized with the
continuous finite element space $Y_h^C=\cS^1(\cT_h)^d$ and the $H(\diver;\O)$-conforming
finite element space $Y_h^{dC}=\cL^1(\cT_h)^d \cap H_\NN(\diver;\O)$.
Again, one can observe that using locally refined meshes with~$Y_h^C$ as the
discrete space for the dual problem yields a better convergence
rate $\overline{h}^{0.76} \sim N^{-0.38}$ as compared to
uniform refinement with an experimental convergence rate of~$\overline{h}^{0.47}$.
For the choice~$Y_h^{dC}$ we record the rates $\overline{h}^{0.81} \sim N^{-0.4}$ (adaptive)
and $\overline{h}^{0.47} \sim N^{-0.24}$ (uniform).
The choice of the finite element space for the discretization of the dual problem does not significantly
affect the rate of convergence of the primal-dual gap error estimator~$\eta_{\rm rof}^h$.

\begin{example}\label{ex:circle}
We set $\G_\DD=\p\O$, $\G_\NN=\emptyset$, $\a=10$ and $g=\chi_{B_{1/2}^2(0)}$ with
$B_{1/2}^2(0)=\{x \in \R^2: \; |x|\leq 1/2\}$.
\end{example}

In this case the exact solution is given by $u = (3/5)\chi_{B_{1/2}^2(0)}$,
cf.~\cite{Bar15}.

\begin{figure}[!htb]
\centering
\subfloat{\resizebox{6.6cm}{5.0cm}{\input 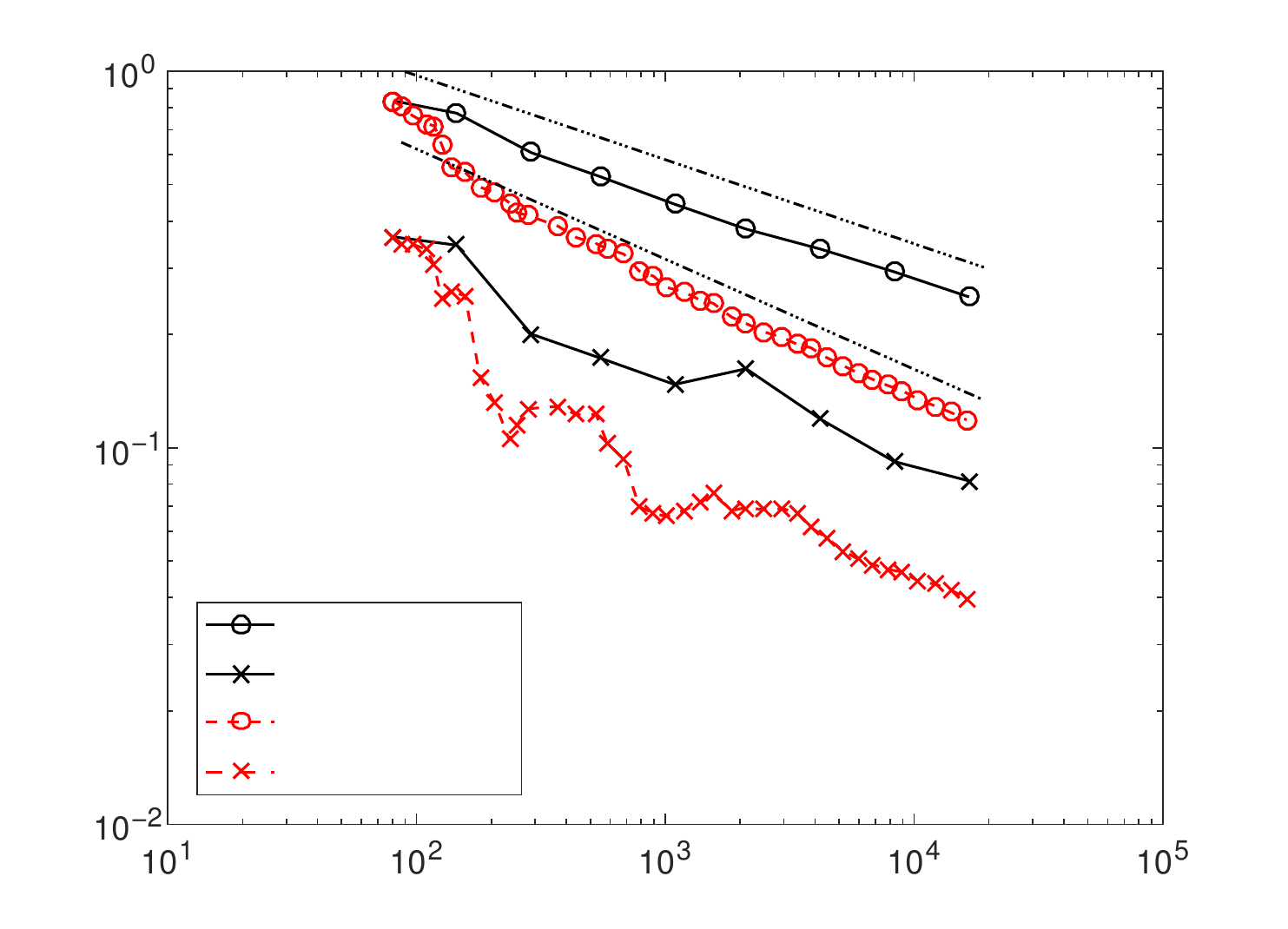_t}}
\subfloat{\resizebox{6.6cm}{5.0cm}{\input 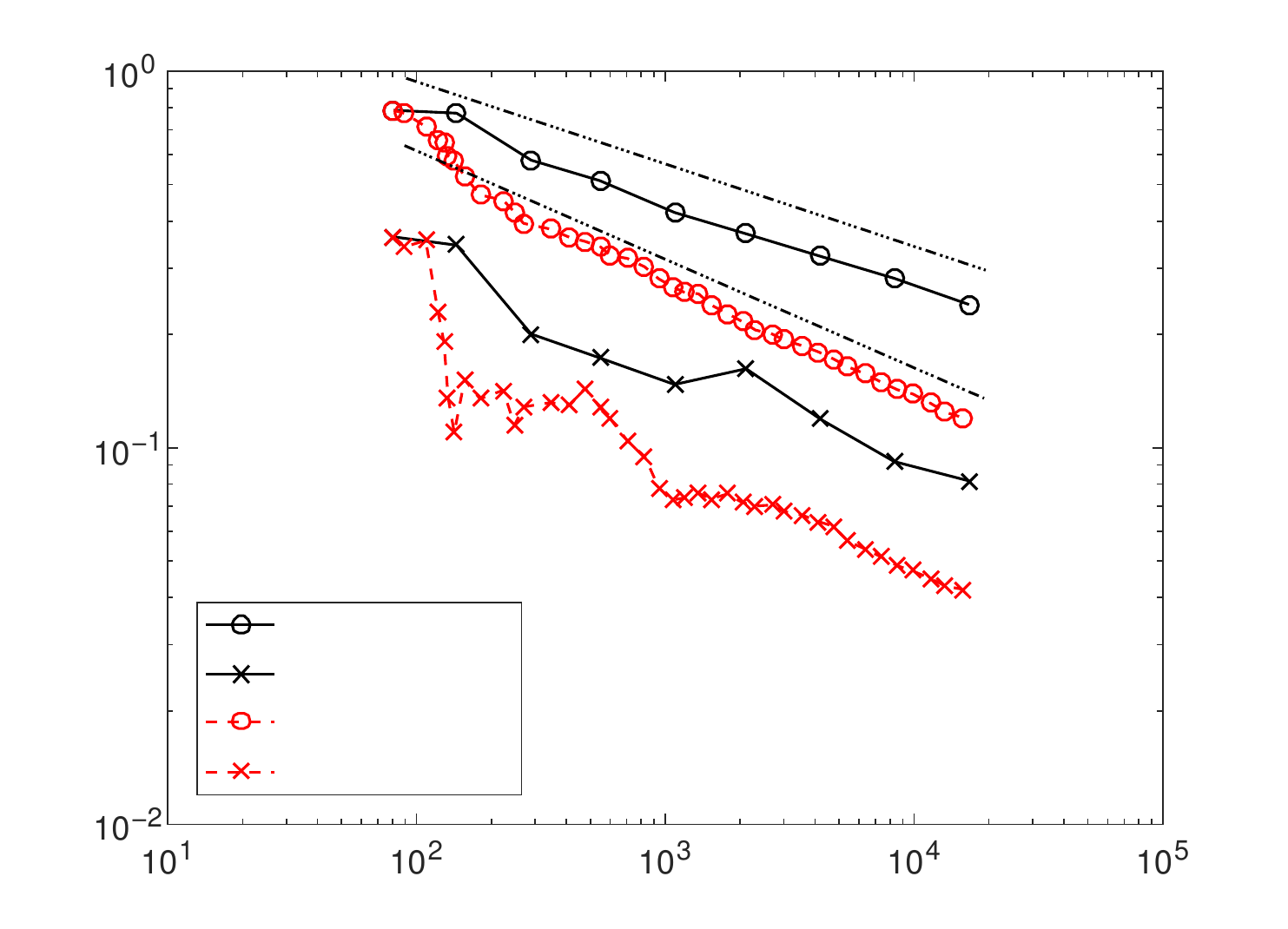_t}}
\caption{Primal-dual gap error estimator~$\eta_{\rm rof}^h$ and $L^2$-error $\vrho_{\rm rof}^{1/2}=(\a/2)^{1/2}\|u-u_h\|$ for Example~\ref{ex:circle} with discretization of the
dual problem with continuous finite element space~$Y_h^C$ (left) and $H(\diver;\O)$-conforming
finite element space~$Y_h^{dC}$ (right) for uniform and adaptive mesh refinement.}\label{fig:error ROF circle}
\end{figure}

\begin{figure}[!htb]
\centering
\subfloat{\resizebox{6.6cm}{5cm}{\input 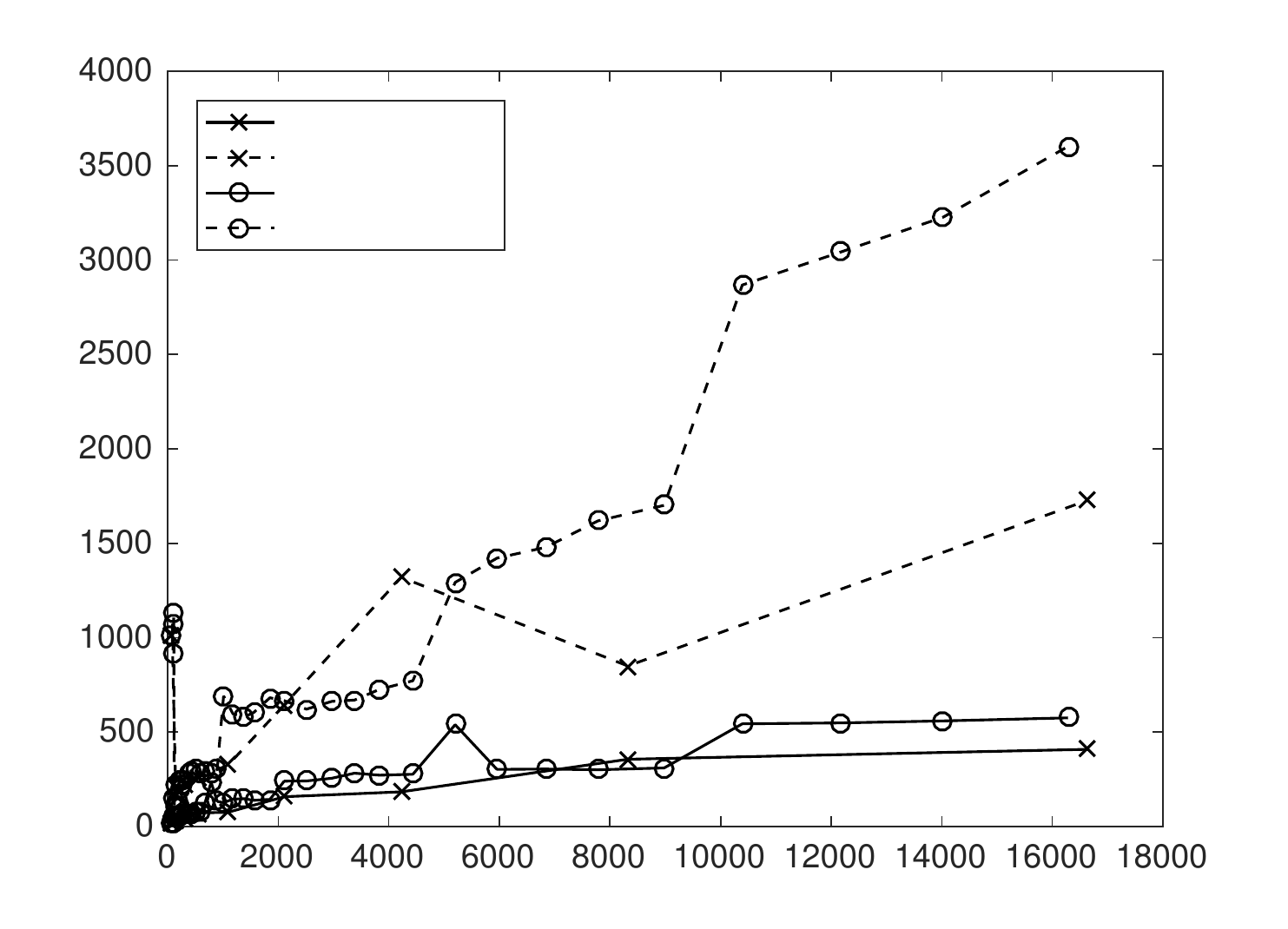_t}}
\subfloat{\resizebox{6.6cm}{5cm}{\input 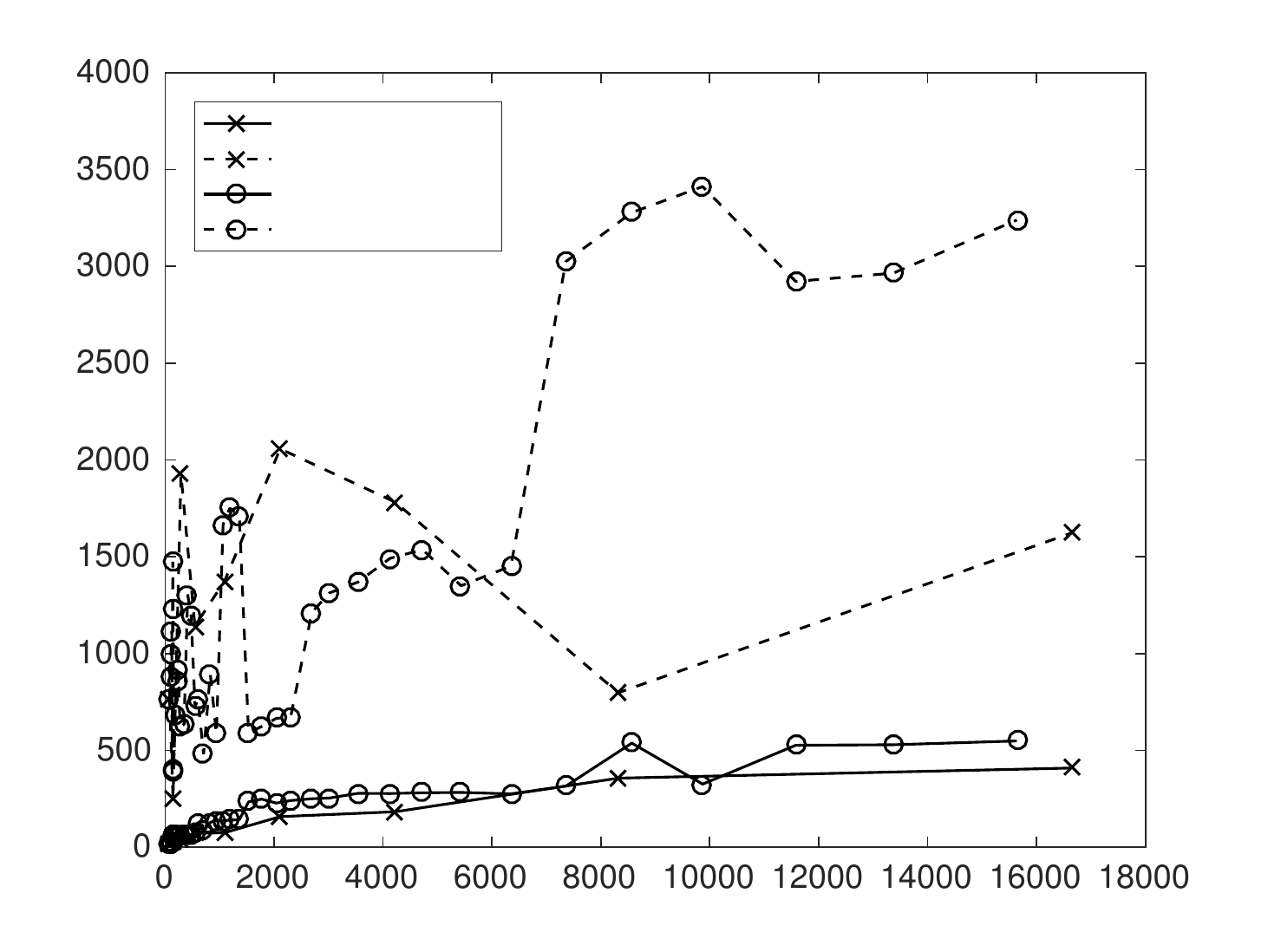_t}}
\caption{Iterations numbers for Variable-ADMM for the minimization of~$E_{\rm rof}^h$
and~$-D_{\rm rof}^h$ for both uniform and adaptive refinement. Left: $Y_h=Y_h^C$; right: $Y_h=Y_h^{dC}$.}\label{fig:iteration numbers ROF circle}
\end{figure}

In Figure~\ref{fig:error ROF circle} the error
estimator~$\eta_{\rm rof}^h$ and the $L^2$-error
\[
\vrho_{\rm rof}^{1/2} = (\a/2)^{1/2} \|u-u_h\|
\]
are plotted against the number of degrees of freedom in a loglog-plot and again, as before,
both for uniform and adaptive mesh refinement and for the discretization of the dual problem
with~$Y_h^C$ (left) and~$Y_h^{dC}$ (right). The plot underlines
that the quantity~$\eta_{\rm rof}^h$ defines a reliable
estimator for the $L^2$-error~$\vrho_{\rm rof}^{1/2}$ as predicted by
Proposition~\ref{prop:a posteriori ROF}. One can, once again, observe that adaptive mesh refinement
leads to an improvement of the convergence rate from~$\overline{h}^{0.44} \sim N^{-0.22}$
to~$\overline{h}^{0.62} \sim N^{-0.31}$ for both discretization methods for the dual problem.
In Figure~\ref{fig:iteration numbers ROF circle} the iteration numbers for the Variable-ADMM
for the primal and dual problem are plotted against the number of degrees of freedom for both
uniform and adaptive mesh refinement and for discretizations of the dual problem with
$Y_h=Y_h^C$ and $Y_h=Y_h^{dC}$.
The error tolerance for the residual in the Variable-ADMM was of order~$\mathcal{O}(h)$.
The iteration numbers for $Y_h=Y_h^C$ and $Y_h=Y_h^{dC}$ do not differ significantly.
However, one can observe that the iteration numbers of the Variable-ADMM as a function
of the degrees of freedom grow significantly faster for the dual problem compared to
the primal problem reflecting the weaker coercivity property.

\begin{figure}[!htb]
\captionsetup[subfigure]{labelformat=empty}
\centering
\subfloat{\includegraphics[width=3cm,height=3cm]{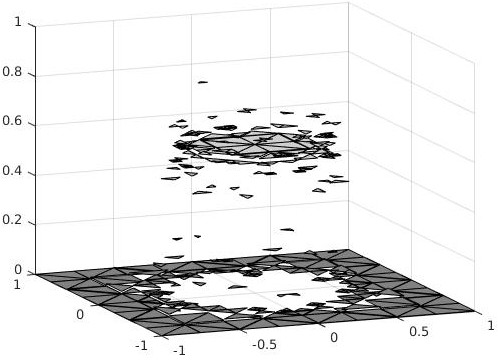}} \hspace*{5mm}
\subfloat{\includegraphics[width=3cm,height=3cm]{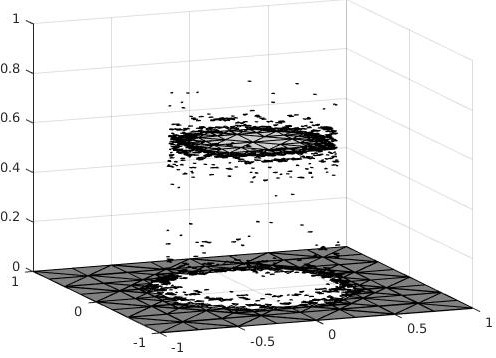}}\hspace*{5mm}
\subfloat{\includegraphics[width=3cm,height=3cm]{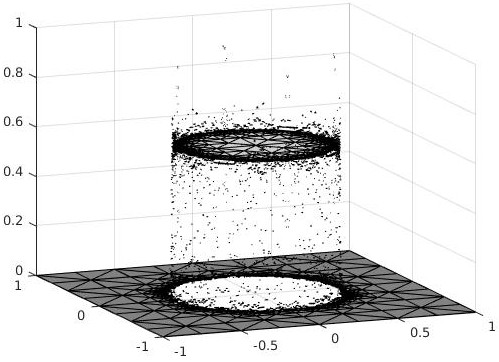}}\par
\subfloat{\includegraphics[width=3cm,height=3cm]{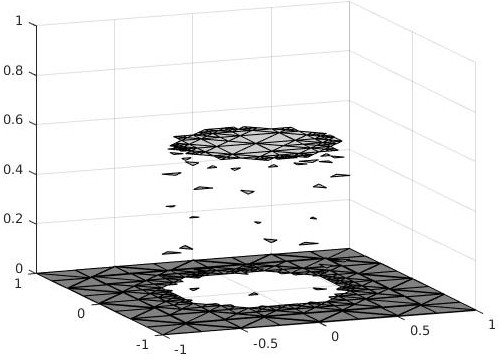}}\hspace*{5mm}
\subfloat{\includegraphics[width=3cm,height=3cm]{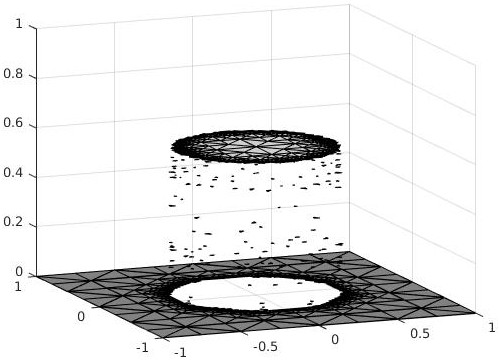}}\hspace*{5mm}
\subfloat{\includegraphics[width=3cm,height=3cm]{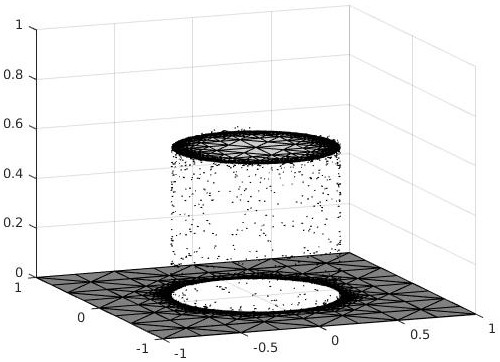}}
\caption{Piecewise constant approximations $\overline{u}_h=(1/\a)\diver p_h+g_h$ for a sequence of adaptively refined triangulations for Example~\ref{ex:circle}.
Top: Dual variable is approximated in $Y_h^C=\cS^1(\cT_h)^d$. Bottom: Dual variable is approximated in $Y_h^{dC}=\cL^1(\cT_h)^d\cap H_\NN(\diver;\O)$. One can observe oscillations of~$\overline{u}_h$ along the jump set for the discretization of the dual ROF problem with~$Y_h^C$.}\label{fig:up0 circle}
\end{figure}

In Figure~\ref{fig:up0 circle} we depicted for a sequence of adaptively refined triangulations
the piecewise constant approximations $\overline{u}_h = (1/\a)\diver p_h + g_h$ with
$p_h \in Y_h^C$ (top) and $p_h \in Y_h^{dC}$ (bottom), cf. Proposition~\ref{prop:piecewise constant approximation}.
Although the different discretization
methods for the dual problem do not affect the convergence rates in the presented experiments,
the discretization of the
dual problem with the continuous finite element space~$Y_h^C$ causes oscillations
in~$\overline{u}_h$ along the jump set.

\section{Conclusion}

We have seen that the primal-dual gap error estimator defines a
reliable upper bound with constant one for the error in the energy
for convex minimization problems. For uniformly convex minimization problems
it also controls the error with respect to a distance induced by the uniform convexity.
The primal-dual gap error estimator has been introduced in~\cite{Rep00} in an
abstract setting and has been applied to several minimization problems in an
infinite-dimensional framework. We extended the theory to general
finite element discretizations of convex minimization problems and applied
the theory to the nonlinear Laplace problem and the ROF problem, which serve
as model problems for a wide class of convex minimization problems. The theoretical
results, especially the reliability of the primal-dual gap error estimator, has been
confirmed in several numerical experiments. In order to compute the estimator
we approximately solved the primal and dual problems using the Variable-ADMM
provided in~\cite{BarMil17}. Yet, it seems necessary to consider
more efficient strategies to construct feasible functions especially for the dual problems.

\bibliographystyle{plain}
\bibliography{dual_conf}

\begin{thebibliography}{10}

\bibitem{AinOde97}
M.~Ainsworth and J.~T. Oden.
\newblock A posteriori error estimation in finite element analysis.
\newblock {\em Computer Methods in Applied Mechanics and Engineering},
  142(1):1--88, 1997.

\bibitem{AtkCha84}
C.~Atkinson and C.~R. Champion.
\newblock Some boundary-value problems for the equation {$\nabla.(|\nabla
  \phi|^N\nabla \phi)=0$}.
\newblock {\em The Quarterly Journal of Mechanics and Applied Mathematics},
  37(3):401--419, 1984.

\bibitem{AtkJon74}
C.~Atkinson and C.~W. Jones.
\newblock Similarity solutions in some nonlinear diffusion problems and in
  boundary-layer flow of a pseudo plastic fluid.
\newblock {\em The Quarterly Journal of Mechanics and Applied Mathematics},
  27:193--211, 1974.

\bibitem{AubKor06}
G.~Aubert and P.~Kornprobst.
\newblock {\em Mathematical Problems in Image Processing}, volume 147 of {\em
  Applied Mathematical Sciences}.
\newblock Springer, 2nd edition, 2006.

\bibitem{BabRhe78}
I.~Babu{\v{s}}ka and W.~C. Rheinboldt.
\newblock Error estimates for adaptive finite element computations.
\newblock {\em SIAM Journal on Numerical Analysis}, 15(4):736--754, 1978.

\bibitem{BarNaj90}
Jacques Baranger and Khalid Najib.
\newblock Numerical analysis of quasi-newtonian flow obeying the power low or
  the carreau flow.
\newblock {\em Numerische Mathematik}, 58(1):35--49, 1990.

\bibitem{BarLiu93}
J.~W. Barrett and W.~B. Liu.
\newblock Finite element approximation of the {$p$}-{L}aplacian.
\newblock {\em Mathematics of Computation}, 61(204):523--537, 1993.

\bibitem{Bar15:2}
S.~Bartels.
\newblock Error control and adaptivity for a variational model problem defined
  on functions of bounded variation.
\newblock {\em Mathematics of Computation}, 84(293):1217--1240, 2015.

\bibitem{Bar15}
S.~Bartels.
\newblock {\em Numerical Methods for Nonlinear Partial Differential Equations},
  volume~47 of {\em Springer Series in Computational Mathematics}.
\newblock Springer, 2015.

\bibitem{BarMil17}
S.~Bartels and M.~Milicevic.
\newblock Alternating direction method of multipliers with variable step sizes.
\newblock arXiv:1704.06069, 2017.

\bibitem{BarNocSal14}
S.~Bartels, R.~H. Nochetto, and A.~J. Salgado.
\newblock Discrete total variation flows without regularization.
\newblock {\em SIAM Journal on Numerical Analysis}, 52(1):363--385, 2014.

\bibitem{BarSch17}
S.~Bartels and P.~Sch\"on.
\newblock Adaptive approximation of the monge-kantorovich problem via
  primal-dual gap estimates.
\newblock {\em ESAIM: M2AN}, 51(6):2237--2261, 2017.

\bibitem{BelDieKre12}
L.~Belenki, L.~Diening, and C.~Kreuzer.
\newblock Optimality of an adaptive finite element method for the
  {$p$}-laplacian equation.
\newblock {\em IMA Journal of Numerical Analysis}, 32:484--510, 2012.

\bibitem{BofBreFor13}
D.~Boffi, F.~Brezzi, and M.~Fortin.
\newblock {\em Mixed Finite Element Methods and Applications}, volume~44 of
  {\em Springer Series in Computational Mathematics}.
\newblock Springer, 2013.

\bibitem{BreSco08}
S.~C Brenner and L.~R. Scott.
\newblock {\em The Mathematical Theory of Finite Element Methods}, volume~15 of
  {\em Texts in Applied Mathematics}.
\newblock Springer, 3rd edition, 2008.

\bibitem{CarKlo03}
C.~Carstensen and R.~Klose.
\newblock A posteriori finite element error control for the {$p$}-{L}aplace
  problem.
\newblock {\em SIAM Journal on Scientific Computing}, 25(3):792--814, 2003.

\bibitem{CarLiuYan06}
C.~Carstensen, W.~B. Liu, and N.~N. Yan.
\newblock A posteriori fe error control for {$p$}-{L}aplacian by gradient
  recovery in quasi-norm.
\newblock {\em Mathematics of Computation}, 75(256):1599--1616, 2006.

\bibitem{Cho89}
S.-S. Chow.
\newblock Finite element error estimates for non-linear elliptic equations of
  monotone type.
\newblock {\em Numerische Mathematik}, 54(4):373--393, 1989.

\bibitem{DieKre08}
L.~Diening and C.~Kreuzer.
\newblock Linear convergence of an adaptive finite element method for the
  {$p$}-{L}aplacian equation.
\newblock {\em SIAM Journal on Numerical Analysis}, 46(2):614--638, 2008.

\bibitem{DieRuz07}
L.~Diening and M.~R{\r{u}}{\v{z}}i{\v{c}}ka.
\newblock Interpolation operators in orlicz--sobolev spaces.
\newblock {\em Numerische Mathematik}, 107(1):107--129, 2007.

\bibitem{Ebm02}
C.~Ebmeyer.
\newblock Mixed boundary value problems for nonlinear elliptic systems with
  {$p$}‐structure in polyhedral domains.
\newblock {\em Mathematische Nachrichten}, 236(1):91--108.

\bibitem{Ebm01}
C.~Ebmeyer.
\newblock Nonlinear elliptic problems with {$p$}-structure under mixed boundary
  value conditions in polyhedral domains.
\newblock {\em Adv. Differential Equations}, 6(7):873--895, 2001.

\bibitem{Ebmeyer05}
C.~Ebmeyer.
\newblock Global regularity in {S}obolev spaces for elliptic problems with
  {$p$}-structure on bounded domains.
\newblock In J.~F. Rodrigues, G.~Seregin, and J.~M. Urbano, editors, {\em
  Trends in Partial Differential Equations of Mathematical Physics}, pages
  81--89, Basel, 2005. Birkh{\"a}user Basel.

\bibitem{EbmLiu05}
C.~Ebmeyer and W.~B. Liu.
\newblock Quasi-norm interpolation error estimates for the piecewise linear
  finite element approximation of {$p$}-{L}aplacian problems.
\newblock {\em Numerische Mathematik}, 100(2):233--258, 2005.

\bibitem{EbmLiuSte05}
C.~Ebmeyer, W.~B. Liu, and M.~Steinhauer.
\newblock Global regularity in fractional order sobolev spaces for the
  {$p$}-{L}aplace equation on polyhedral domains.
\newblock {\em Z. Anal. Anwend.}, 24(2):353--374, 2005.

\bibitem{EkeTem99}
I.~Ekeland and R.~T\'{e}mam.
\newblock {\em Convex Analysis and Variational Problems}.
\newblock Society for Industrial and Applied Mathematics, 1999.

\bibitem{AlaErnVoh11}
L.~El~Alaoui, A.~Ern, and M.~Vohral\'{\i}k.
\newblock Guaranteed and robust a posteriori error estimates and balancing
  discretization and linearization errors for monotone nonlinear problems.
\newblock {\em Computer Methods in Applied Mechanics and Engineering},
  200(37):2782--2795, 2011.
\newblock Special Issue on Modeling Error Estimation and Adaptive Modeling.

\bibitem{ErnVoh13}
A.~Ern and M.~Vohral\'{\i}k.
\newblock Adaptive inexact newton methods with a posteriori stopping criteria
  for nonlinear diffusion pdes.
\newblock {\em SIAM Journal on Scientific Computing}, 35(4):A1761--A1791, 2013.

\bibitem{GabMer76}
D.~Gabay and B.~Mercier.
\newblock A dual algorithm for the solution of nonlinear variational problems
  via finite element approximation.
\newblock {\em Computers {\&} Mathematics with Applications}, 2(1):17--40,
  1976.

\bibitem{GloMar75}
R.~Glowinski and A.~Marroco.
\newblock Sur l'approximation, par \'el\'ements finis d'ordre un, et la
  r\'esolution, par p\'enalisation-dualit\'e d'une classe de probl\`emes de
  {D}irichlet non lin\'eaires.
\newblock {\em R.A.I.R.O. Analyse Num\'erique}, 9(R2):41--76, 1975.

\bibitem{KunHin04}
K.~Kunisch and M.~Hinterm\"uller.
\newblock Total bounded variation regularization as a bilaterally constrained
  optimization problem.
\newblock {\em SIAM Journal on Applied Mathematics}, 64(4):1311--1333, 2004.

\bibitem{LiuBar93:2}
W.~B. Liu and J.~W. Barrett.
\newblock A further remark on the regularity of the solutions of the
  {$p$}-{L}aplacian and its applications to their finite element approximation.
\newblock {\em Nonlinear Analysis: Theory, Methods {\&} Applications},
  21(5):379--387, 1993.

\bibitem{LiuBar93}
W.~B. Liu and J.~W. Barrett.
\newblock A remark on the regularity of the solutions of the {$p$}-{L}aplacian
  and its application to their finite element approximation.
\newblock {\em Journal of Mathematical Analysis and Applications},
  178(2):470--487, 1993.

\bibitem{LiuYan01:2}
W.~B. Liu and N.~N. Yan.
\newblock Quasi-norm a priori and a posteriori error estimates for the
  nonconforming approximation of {$p$}-{L}aplacian.
\newblock {\em Numerische Mathematik}, 89(2):341--378, 2001.

\bibitem{LiuYan01}
W.~B. Liu and N.~N. Yan.
\newblock Quasi-norm local error estimators for {$p$}-{L}aplacian.
\newblock {\em SIAM Journal on Numerical Analysis}, 39(1):100--127, 2001.

\bibitem{LiuYan02}
W.~B. Liu and N.~N. Yan.
\newblock On quasi-norm interpolation error estimation and a posteriori error
  estimates for {$p$}-{L}aplacian.
\newblock {\em SIAM Journal on Numerical Analysis}, 40(5):1870--1895, 2002.

\bibitem{MIL19}
M.~Milicevic.
\newblock {\em Finite Element Discretization and Iterative Solution of Total
  Variation Regularized Minimization Problems and Application to the Simulation
  of Rate-Independent Damage Evolutions}.
\newblock PhD thesis, Albert-Ludwigs-Universit\"at Freiburg, 2019.

\bibitem{NocSieVee09}
R.~H. Nochetto, K.~G. Siebert, and A.~Veeser.
\newblock Theory of adaptive finite element methods: An introduction.
\newblock In R.~DeVore and A.~Kunoth, editors, {\em Multiscale, Nonlinear and
  Adaptive Approximation}, pages 409--542, Berlin, Heidelberg, 2009. Springer
  Berlin Heidelberg.

\bibitem{Phi61}
J.~R. Philip.
\newblock {$n$}-diffusion.
\newblock {\em Australian Journal of Physics}, 14:1--13, 1961.

\bibitem{Rep99}
S.~I. Repin.
\newblock A posteriori error estimates for approximate solutions to variational
  problems with strongly convex functionals.
\newblock {\em Journal of Mathematical Sciences}, 97(4):4311--4328, 1999.

\bibitem{Rep00:2}
S.~I. Repin.
\newblock A posteriori error estimates for approximate solutions of variational
  problems with functionals of power growth.
\newblock {\em Journal of Mathematical Sciences}, 101(5):3531--3538, 2000.

\bibitem{Rep00:3}
S.~I. Repin.
\newblock A posteriori error estimation for nonlinear variational problems by
  duality theory.
\newblock {\em Journal of Mathematical Sciences}, 99(1):927--935, 2000.

\bibitem{Rep00}
S.~I. Repin.
\newblock A posteriori error estimation for variational problems with uniformly
  convex functionals.
\newblock {\em Mathematics of Computation}, 69(230):481--500, 2000.

\bibitem{RepXan96}
S.~I. Repin and L.~S. Xanthis.
\newblock A posteriori error estimation for elasto-plastic problems based on
  duality theory.
\newblock {\em Computer Methods in Applied Mechanics and Engineering},
  138(1):317--339, 1996.

\bibitem{RepXan97}
S.~I. Repin and L.~S. Xanthis.
\newblock A posteriori error estimation for nonlinear variational problems.
\newblock {\em Comptes Rendus de l'Académie des Sciences - Series I -
  Mathematics}, 324(10):1169--1174, 1997.

\bibitem{RudOshFat92}
L.~I. Rudin, S.~Osher, and E.~Fatemi.
\newblock Nonlinear total variation based noise removal algorithms.
\newblock {\em Physica D: Nonlinear Phenomena}, 60(1):259--268, 1992.

\bibitem{Ste07}
R.~Stevenson.
\newblock Optimality of a standard adaptive finite element method.
\newblock {\em Foundations of Computational Mathematics}, 7(2):245--269, 2007.

\bibitem{Tho13}
M.~Thomas.
\newblock Quasistatic damage evolution with spatial
  {$\mathrm{BV}$}-regularization.
\newblock {\em Discrete {\&} Continuous Dynamical Systems - S}, 6(1):235--255,
  2013.

\bibitem{Vee02}
A.~Veeser.
\newblock Convergent adaptive finite elements for the nonlinear {L}aplacian.
\newblock {\em Numerische Mathematik}, 92(4):743--770, 2002.

\bibitem{Ver13}
R.~Verf\"urth.
\newblock {\em A Posteriori Error Estimation Techniques for Finite Element
  Methods}.
\newblock Oxford University Press, 2013.

\end{thebibliography}

\end{document}